\DeclarePairedDelimiter\ceil{\lceil}{\rceil}
\DeclarePairedDelimiter\floor{\lfloor}{\rfloor}
\numberwithin{equation}{section} %% Comment out for sequentially-numbered
\numberwithin{figure}{section} %% Comment out for sequentially-numbered
  \theoremstyle{plain}
  \newtheorem{thm}{Theorem}[section]
  \theoremstyle{plain}
  \newtheorem{cor}[thm]{Corollary}
  \theoremstyle{plain}
  \newtheorem{prop}[thm]{Proposition}
  \theoremstyle{plain}
  \newtheorem{rem}[thm]{Remark}
  \theoremstyle{plain}
 \newtheorem{mydef}{Definition}
  \theoremstyle{plain}
  \newtheorem{lem}[thm]{Lemma}
  \theoremstyle{definition}
\begin{document}

%\title[Round Taylor method]{A small variation of the Taylor Method and periodic solutions of the 3-body problem}
%\author{Oscar Perdomo}
%\date{\today}
\centerline{\Large \bf A small variation of the Taylor Method and }
\vskip.2cm
\centerline{\Large \bf periodic solutions of  the 3-body problem}
\vskip.3cm
\centerline{\large \bf Oscar Perdomo}

\title{A small variation of the Taylor Method and periodic solutions of the 3-body problem}
%\maketitle
%\curraddr{Department of Mathematics\\
%Central Connecticut State University\\
%New Britain, CT 06050\\}

%\email{ perdomoosm@ccsu.edu}

%%ABSTRACT

\begin{abstract}

In this paper we define a small variation of the Taylor method and a formula for the global error of this new numerical method that allows us to keep track of the round-off  error and does not require previous knowledge of the exact solution. As an application we provide a rigorous proof of the construction/existence of a periodic solution of the three body problem. Some images of this periodic motion can be seen at  {\color{blue} https://www.youtube.com/watch?v=fSmQyeKcj5k}
\end{abstract}

%\subjclass[2000]{53C42, 53A10}

%%INTRODUCTION

\begin{figure}[hbtp]
\begin{center}\includegraphics[width=.7\textwidth]{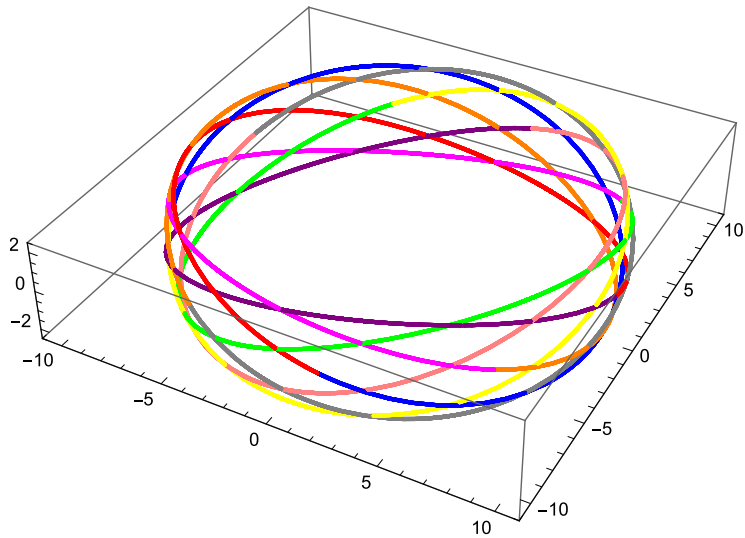}
\end{center}
\caption{This is the orbit of one of  the bodies in the periodic solution that we have mathematically shown to exist. The three masses are 100, 100 and 200 and the period of this solution is
$T=96.241\dots$ We are assuming that the gravitational constant is 1.}\label{fig1}
\end{figure}

\section{Introduction}

\subsection{The numerical Part} When we think about using a numerical method to do a mathematical proof, the first thing  that comes to our mind is that we need to consider the error of the numerical method. Very soon we realize that the standard formula for the error is no very useful due to the fact that it assumes that all the basic operations are being made with no error and this is computationally very expensive. As an example, if we consider the initial value problem  $y'=y-\frac{1}{3} y^2$ with $y(0)=\frac{1}{2}$ and we want to estimate $y(\frac{3}{10})$ using the Euler Method with $h=\frac{1}{100}$, we see that, even though we are considering the computational cheapest numerical method and we are only doing 30 iterations, a regular Computer Algebra System (Mathematica 10 in this case) will need 8696.99 seconds to do these $30$ iterations most likely because the final answer is a rational number of the form $\frac{p}{q}$ with p and  $q$ integers both with 2607760525 digits, more than 2.6 billions digits. On the other hand, if we allow the Computer Algebra System to have  round-off error in  every operation involved in each iteration, it becomes challenging to keep track of the error because easily, each iteration may have a few dozens of operations. In our example above, each iteration has $3$ operations: one raising to the square, one product and a difference. We will exploit the fact that most  of the Computer Algebra Systems  (CAS) can compute, with a mathematical precision, the two integers $i$ and $j$ such that $i\le r_1+\sqrt{r_2}\le j$ where $r_1$ and $r_2$ rational number and $r_2\ge0$. The new numerical method that we are proposing in this paper allows us to work all the time with mathematical precision without paying the price of dealing with numbers that have huge expression using integers. Our numerical method will
do the operations in each iteration with a mathematical precision and then at the end of the iteration, it will find with a mathematical precision a rational number that approximates the output of the iteration within a distance $H$. In order to be able to use the method, we  find a formula for  the error between the real value of the solution of the ODE and the approximation given by the numerical method in terms of the two values: $h$, the desired value for the  step, and $H$, the desired value for the rounding in each iteration. This is done in Theorem (\ref{error}). In general, when we  try to estimate the error of a numerical method, a problem that we face  is that we need to have some a-priori bounds of the solution of the differential equation that we do not know. An important aspect of Theorem (\ref{error}) is that it does not need previous knowledge of the exact solution.

\subsection{Periodic solutions of the three body problem} Poincare showed that the three body problem has a chaotic behavior that makes it difficult to solve. For this reason, it is not surprising that the only explicit solutions were discovered more than 240 years ago by Euler in 1765 and by Lagrange in 1772. 
Recall that we have a periodic solution when the values of the positions and the velocities of the three bodies, after some time $T$,  agree with the values of the positions and velocities at $t=0$. Usually, when the values of a solution after $T$ are within a small distance $\epsilon$ from the initial condition, this solution is called {\it numerically periodic}. There is an enormous amount of numerically periodic solutions, for example, in 1975 \cite{H}, H. Henon showed a family of numerically periodic planar solutions of the three body with $\epsilon=10^{-14}$.   Despite  the abundance  of numerically periodic solutions,  the task of showing that numerically periodic solutions are periodic is a difficult one. In  2000, Chenciner and Montgomery \cite{CM}  showed that a numerically solution found by Moore in 1993, \cite{Mo}, was indeed periodic. This example represents (to my knowledge) the first example of a periodic solution that has a numerical image associated with it and does not have an explicit formula. It is important to mention that proofs showing the existence of periodic solutions have been found before, for example, Meyer and Schmidt1993 \cite{M}. These solutions are usually near a bifurcation point of a family of solutions described with explicit formulas.
After the paper by Chenciner and Montgomery, there has been more proofs showing that some numerical solutions are periodic. Among them, we have  papers by Terracini and Ferrario, K.C Chen and Simo and Kapitza and Gronchi. An excellent account of the work done in this direction so far can be found in the site {\color{blue}http://montgomery.math.ucsc.edu/Nbdy.html}

In this paper we will show that the numerical periodic solution given by the initial condition explained in Figure \ref{fig2} is periodic.

\begin{figure}[hbtp]
\begin{center}\includegraphics[width=.7\textwidth]{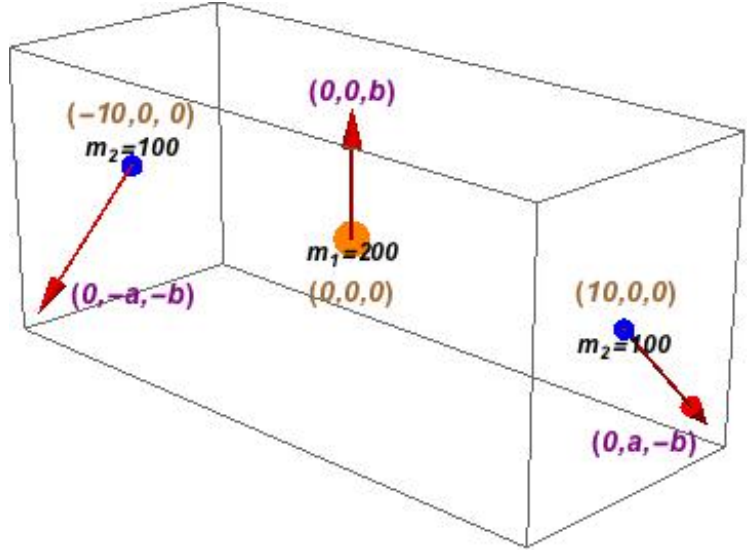}
\end{center}
\caption{There is a periodic solution with period $T=96.241\dots$, $a=4.3170\dots$ and $b=1.4903\dots$. }\label{fig2}
\end{figure}

To describe the motion, let us assume that  $t$ represents seconds. Keep in mind that the units of time, mass and distance, have been adjusted so that the gravitational constant is 1. For this periodic solution the motion starts with two bodies, each one with a mass of 100, separated  20 units and the third body with a mass of 200 right in the middle. If we reference the body in the middle as body 1, then, we have that the initial position of the  body 1 is $(0,0,0)$ and the initial position of the other two bodies are $(10,0,0)$ and $(-10,0,0)$. Body 1 will always stay in the $z$-axis while the other two bodies will move around the $z$-axis. During the first  $t_0=2.673\dots$ seconds, Body 1  moves straight up to the position $(0,0,2.453\dots)=(0,0,z_1)$, this is the farthest up this body will get. Simultaneously, the other two bodies move down and around the $z$-axis;  after these first $t_0$ seconds they are both $z_1$ units below the $x$-$y$ plane, they are $r_1=9.43\dots $ units away from the the $z$ axis; and they have made a $\frac{7\pi}{18}$-rotation (70 degrees) with respect to the $z$-axis. At every instance, the distance to the $z$-axis of these two bodies is the same and it is twice the distance between them.  In this way, after the first $t_0$ seconds Body  2 is at $(r_1 \cos \frac{7 \pi}{18},r_1 \sin\frac{7 \pi}{18},-z_1)$ and Body 3 is at $(-r_1 \cos \frac{7 \pi}{18},-r_1 \sin\frac{7 \pi}{18},-z_1)$. During the next $t_0$ seconds the body 1 moves down getting back to the origin and the other two bodies will simultaneously rotate, move up and move apart from the $z$ axis; they will rotate another $\frac{7 \pi}{18}$, and they will go back to be 10 units apart from the $z$-axis. Notice that after $2t_0$ seconds the relative positions with respect to each other are the same as in the starting position. The only difference is that Body 1 is now going down and the other two bodies are going up. With respect to a fixed reference frame, after $2t_0$ seconds, the positions of all three bodies differ from the starting position by a rotation of $\frac{7 \pi}{9}$. The next $t_0$ seconds Body 1 will reach its lowest point $(0,0,-z_1)$ while the other two bodies will reach  the highest point rotating another $\frac{7 \pi}{18}$. Finally after another $t_0$ seconds the relative positions with respect to each other are the same as the starting position, and this time the body 1 is going up as it was at the starting position. With respect to a fixed frame, the positions after $4 t_0$ seconds differ from the starting position by a rotation of $\frac{14 \pi}{9}$. Doing 9 more of these $4t_0$ cycles will bring the three bodies to the starting position. In this process, bodies 2 and 3 will have completed 7 rotations around the $z$-axis. The trajectory of Body 2 is shown in Figure \ref{fig1}. Each color represents a cycle of $4t_0$ seconds, for this reason there are 9 colors in the picture. Body 3 does not share the same trajectory as Body 2. The trajectory of Body 3 is the reflection with respect to the $x$-$y$ plane of the trajectory of the body 2.
  
We would like to emphasize that the goal of this part of the paper is not to show the existence of this type of solutions: This was done by Meyer and Schmidt \cite{M}, nor to show numerically periodic solutions  of this type: We can see similar images of solutions like the one we are showing (when the three masses are the same), in the work by Yan and Ouyang \cite{YO}. The goal of this part of the paper is to give a rigorous proof that a particular numerically periodic solution is indeed periodic. 

We will reduce the proof of  the periodicity of this solution to show that three functions defined in an open set of $R^3$  must vanish simultaneously. The variables in the domain of these three functions are given by triples $(t,a,b)$ where $a$ and $b$ are explained in Figure \ref{fig2}

% and in order to prove this, we use, beside the formula for the error given in Theorem \ref{error}, the Poincare-Miranda Theorem (\ref{PMT}) which is essentially a generalization of the intermediate value theorem.

The author has found some numerical solutions of the same type, not only for the 3 body problem but for the $n$-body problem. Some aspects of these solutions have been posted online:

The link {\color{blue} https://www.youtube.com/watch?v=PtEMb6Rvflg} shows a periodic solution of the 6 body problem.

The link {\color{blue}https://www.youtube.com/watch?v=2Wpv6vpOxXk} shows a periodic solution of the four body problem. 

The link {\color{blue}https://www.youtube.com/watch?v=hjQp1P09560} shows a periodic solution of the three body problem. 

The images in the videos were generated by solving the differential equation that governs the n body problem  and then posting 10 pictures per second, in this way, the time in the video is proportional to the time $t$ of the solution.

The technique used in this paper to prove periodicity is different to the one used by Chenciner and Montgomery, they used variational methods. There are five ingredients in our proof that the solution that we are considering is periodic: (i) The Poincare-Miranda theorem, Theorem \ref{PMT}, which is essentially a generalization of the intermediate value theorem.   (ii) A symmetry  result that allow to integrate the ordinary differential equation (ODE) over a quarter of  a period instead of the whole period.  Lemma \ref{symlem}. (iii) The Round Taylor Method to solve differential equations, that allow us to estimate the values of the functions that we are considering. Section \ref{nm}. (iv) A lemma related with the the implicit function theorem that allow us to find a set where the solution of a system of equations of the form $f_1(t,a,b)=0, f_2(t,a,b)=0$ is given  by only  points on a connected smooth curve, Section \ref{ift}. (v) A theorem that tell us how to compute the partial derivative of an ODE with respect to the initial conditions and parameters in the ODE. \cite{G}.

The author would like to express his gratitude to Andr\'es Rivera, Richard Montgomery, Carles Simo and Marian Anton for their valuable comments on this work.

%%%%%%%%%%%%%%%%%%%%%%%%%%%%%%%%%%%%%%%%%%%%
\section{The Round Taylor Method}\label{nm}

%%%%%%%%%%%%%%%%%%%%%%%%%%%%%%%%%%%%%%%%%%%%

\subsection{The Round Taylor Method} We will be using a small modification of the Taylor method of order $m$ to estimate solutions of differential equations. This modification will allow the computer that implements the method to  work  all the time with rational number and its square roots instead of working with approximations of numbers. In this way, the method is free of error coming from decimal approximation, this is, this method is {\it round -off } error free.  Let us call this method the {\it Round Taylor Method}.
Assume we have the  ordinary differential equation 
\begin{eqnarray}\label{ns}
Y^\prime(t)=f(Y(t)), \quad  Y(0)=y_0
\end{eqnarray}
where $Y(t)=(Y_1(t),\dots,Y_n(t))^T$ and
$ f(y)=(f_1(y),\dots,f_n(y))^T $. Here $v^T$ stands for the transpose of the vector $v$.  Given two positive rational number $h$ and $H$, we define the sequence of points in $R^n$ that starts with $z_0=y_0$ and follows using the recursive formulas 

\begin{eqnarray*}\label{tm}
y_{1}&=&z_0+f(z_0)h +F_1(z_0) \frac{h^2}{2!} +\dots F_{m-1}(z_0) \frac{h^m}{m!}\\
z_1&=&\quad\hbox{a rational number such that }\quad |z_1-y_1|\le H\\
y_2&=&z_1+f(z_1)h +F_1(z_1) \frac{h^2}{2!} +\dots F_{m-1}(z_1) \frac{h^m}{m!} \\
&\vdots&\\
z_i&=&\quad\hbox{a rational number such that }\quad |z_i-y_i|\le H\\
y_{i+1}&=&z_i+f(z_i)h +F_1(z_i) \frac{h^2}{2!} +\dots F_{m-1}(z_i) \frac{h^m}{m!}  \\
\end{eqnarray*}

%The file TheBoundsversion2.nb is related

where $F_1=Df f,\, F_2=DF_1 f, \dots \, F_{m-1}=DF_{m-2} f$. Here $D$ denotes the derivative operator that takes a function from $R^n$ to $R^n$ to the matrix which columns are the partial derivatives with respect to the variables. In order to define $z_i$ from $y_i$, we may use -and will use in this paper- the Floor function $\floor*{x}$ that assigns to a real number $x$ the largest integer not greater than $x$. When $x=(x_1,\dots,x_k)$ is a vector, $\floor*{x}=(\floor{x_1},\dots,\floor*{x_k})$. It is clear that a posible choice for $z_i$ is $z_i=H\floor*{\frac{1}{H} y_i}$. 

 As an example, when we use the Round Taylor method of order 1 for the differential equation given in the introduction, this is, $n=1$, $f(y)=y-\frac{y^2}{3}$, $y_0=\frac{1}{2}$. When $h=1/100$ and $H=10^{-6}$, we obtain that $y_0=1/2=z_0$ and  the next 10 values of $\{y_i,z_i\}$  are given by

$$\left(
\begin{array}{cc}
 \frac{121}{240} & \frac{252083}{500000} \\ \\
 \frac{38127028661111}{75000000000000} & \frac{12709}{25000} \\ \\
 \frac{96109156319}{187500000000} & \frac{256291}{500000} \\ \\
 \frac{38762401423319}{75000000000000} & \frac{16151}{31250} \\ \\
 \frac{152668926449}{292968750000} & \frac{521109}{1000000} \\ \\
 \frac{52541490803373}{100000000000000} & \frac{262707}{500000} \\ \\
 \frac{13243698510717}{25000000000000} & \frac{529747}{1000000} \\ \\
 \frac{160232709115991}{300000000000000} & \frac{534109}{1000000} \\ \\
 \frac{161549754576119}{300000000000000} & \frac{538499}{1000000} \\ \\
 \frac{162875215826999}{300000000000000} & \frac{542917}{1000000} 
\end{array}
\right) $$

Let us start the process of finding the formula for the error of this numerical method.

\begin{lem}\label{tsq} The sequence given by the recursive formula $q_{i+1}=(1+L)q_i+p$  (with $L\ne0$) satisfies that $q_k=\frac{p+Lq_0}{L}(1+L)^k-\frac{p}{L}$.
\end{lem}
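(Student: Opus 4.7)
The plan is to prove this by a standard induction on $k$, or equivalently and more cleanly, by a change of variables that turns the inhomogeneous recurrence into a purely geometric one. The latter approach makes the formula essentially transparent, so I would present it first.

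First I would introduce the shifted sequence $r_i = q_i + \frac{p}{L}$, which is well-defined since $L \neq 0$. A direct substitution into the recurrence gives
\begin{align*}
r_{i+1} = q_{i+1} + \tfrac{p}{L} = (1+L)q_i + p + \tfrac{p}{L} = (1+L)\bigl(q_i + \tfrac{p}{L}\bigr) = (1+L)\,r_i,
\end{align*}
so $\{r_i\}$ is geometric with ratio $1+L$. Hence $r_k = (1+L)^k r_0$, and unwinding the definition yields
\begin{align*}
q_k = (1+L)^k\bigl(q_0 + \tfrac{p}{L}\bigr) - \tfrac{p}{L} = \frac{p + L q_0}{L}(1+L)^k - \frac{p}{L},
\end{align*}
which is exactly the claimed formula.

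If instead one prefers a direct induction on $k$, the base case $k=0$ reduces to $\frac{p+Lq_0}{L} - \frac{p}{L} = q_0$, and the inductive step amounts to the one-line computation $(1+L)q_k + p = \frac{p+Lq_0}{L}(1+L)^{k+1} - \frac{(1+L)p}{L} + p$, where the cancellation $-\frac{(1+L)p}{L} + p = -\frac{p}{L}$ immediately produces the formula for $q_{k+1}$.

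There is no real obstacle in this proof; the only thing to watch is that the division by $L$ is legitimate, which is guaranteed by the hypothesis $L \neq 0$, and that the telescoping of the constant term works out as just noted.
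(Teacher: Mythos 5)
Your proof is correct. The paper proves the lemma by a direct induction on $k$ (base case $k=0$, then showing $(1+L)q_k + p$ collapses to the claimed formula for $q_{k+1}$), which is essentially identical to the inductive argument you sketch at the end. Your primary argument — shifting to $r_i = q_i + p/L$ to turn the affine recurrence into a geometric one — is a genuinely different and arguably cleaner route: it makes the closed form appear automatically rather than as a guess to be verified, at the small cost of pulling the change of variables out of thin air. Both arguments use the hypothesis $L \neq 0$ in the same place (division by $L$), and both are complete; the paper's choice of induction simply keeps the proof self-contained without introducing an auxiliary sequence.
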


\begin{proof}
The proof follows by induction.  Clearly the formula works for $k=0$. Now,  assuming the formula works for $k\ge 0$, we have:

$$q_{k+1}=(1+L)q_k+p=(1+L)\left(\frac{p+Lq_0}{L}(1+L)^k-\frac{p}{L}\right)+p=\frac{p+Lq_0}{L}(1+L)^{k+1}-\frac{p}{L}$$

This finishes the proof of the lemma. \end{proof}

There are different norms that we can use in the set of matrices, in order to establish the the one that we are using, we state the following lemma.

\begin{lem}
Let $A$ be an  $n\times n$ matrix,  if $|A|^2$ denote the sum of the square of its entries, then for any vector $v$ in $R^n$ we have that $|Av|\le |A||v|$.
\end{lem}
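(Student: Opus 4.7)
The plan is to apply the Cauchy--Schwarz inequality row by row. Denoting the entries of $A$ by $A_{ij}$, the $i$-th component of $Av$ is
$$(Av)_i = \sum_{j=1}^n A_{ij} v_j.$$

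First I would bound a single component: by Cauchy--Schwarz applied to the vectors $(A_{i1},\dots,A_{in})$ and $(v_1,\dots,v_n)$,
$$|(Av)_i|^2 = \left(\sum_{j=1}^n A_{ij} v_j\right)^2 \le \left(\sum_{j=1}^n A_{ij}^2\right)\left(\sum_{j=1}^n v_j^2\right) = \left(\sum_{j=1}^n A_{ij}^2\right)|v|^2.$$

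Then I would sum these $n$ inequalities over $i$ to obtain
$$|Av|^2 = \sum_{i=1}^n |(Av)_i|^2 \le \left(\sum_{i,j=1}^n A_{ij}^2\right)|v|^2 = |A|^2 |v|^2,$$
and conclude by taking square roots. There is no real obstacle here; the only thing to be careful about is making explicit that $|A|^2 := \sum_{i,j} A_{ij}^2$ is the Frobenius norm as defined in the statement, so that the row-wise sums of squares combine cleanly into $|A|^2$ after summation over $i$. This fixes the norm on matrices used throughout the rest of the paper.
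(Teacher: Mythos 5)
Your proof is correct. It takes a somewhat different route from the paper's: you apply Cauchy--Schwarz row by row to each component $(Av)_i$ and then sum the resulting squared bounds, which lands directly on $|Av|^2 \le |A|^2 |v|^2$. The paper instead decomposes $Av$ column-wise as $\sum_i v_i A^i$, applies the triangle inequality to get $|Av| \le \sum_i |v_i|\,|A^i|$, and then uses Cauchy--Schwarz once more on the vectors $(|v_1|,\dots,|v_n|)$ and $(|A^1|,\dots,|A^n|)$ to reach $|A||v|$, since $\sum_i |A^i|^2 = |A|^2$. Both are two-line arguments and both identify $|A|$ as the Frobenius norm. Your version works entirely at the level of squared norms and uses Cauchy--Schwarz only once (applied $n$ times in parallel, to the rows); the paper's version passes first through the triangle inequality and then uses Cauchy--Schwarz once on the scaled column norms. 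Neither has an advantage in generality; yours is perhaps the more standard textbook presentation, while the paper's column decomposition is slightly more geometric in flavor.
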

\begin{proof}
If $A^i$ denote the columns of $A$ and $v_i$ denote the entries of $v$, then we have that 

$$|Av|=|\sum_{i=1}^nv_iA^i|\le \sum_{i=1}^n|v_i||A^i|\le |A||v|$$
\end{proof}

\begin{thm}\label{error} Let $h$ and $H$ be two positive numbers and $k$ a positive integer.  Let us consider the sequences $\{(y_i,z_i)\}_{i=0}^k$ given by the Round Taylor Method associated with the values $h$ and $H$ for  the ODE $ Y^\prime(t)=f(Y(t)), \quad  Y(0)=y_0$, described in the beginning of this section.  Let us assume that we can find  constants $M_0,M_1,\dots M_n$, $K_0,\dots, K_{m-1}$  and sets 
$U_1=\{(u_1,\dots,u_n):b_1\le u_1\le c_1, \dots , b_n\le u_n\le c_n\} $ and $U_2=\{(u_1,\dots,u_n):b_1-\epsilon< u_1< c_1+\epsilon, \dots , b_n-\epsilon< u_n< c_n+\epsilon\} $ with 
$\epsilon>M_0 h+\tilde{H}$, where  $\tilde{H}=\frac{M \frac{h^m}{(m+1)!}+\frac{H}{h}}{L}\left(\hbox{e}^{Lkh}-1\right)$ and  $M=\sqrt{M_1^2+\dots +M_n^2}$ and $L=K_0+K_1 \frac{h}{2!}+\dots+K_{m-1} \frac{h^{m-1}}{m!}$. If 

\begin{itemize}
\item The map $f$ and all its partial derivative with order less than $m+2$ are continuous in an open set that contain the closure of $U_2$. 
\item $z_j \in U_1$ for $j=0,\dots , k$. 
\item $|{F_m}_i(u)|\le M_i$ for $i=1,\dots, n$,  for all $u\in U_2$. Here $F_m=({F_m}_1,\dots, {F_m}_n)$.
\item $|Df(u)|\le K_0$ and $|DF_i(u)|\le K_i$ for $i=1,\dots, m-1$ and $u\in U_2$.
\item $|f_i(u)|\le M_0$  for $i=1,\dots, n$ and $u\in U_2$.
\end{itemize}

then, the solution $Y(t)$ of the system of ordinary differential equation is defined on $[0,h k]$ and  for any positive integer $j\le k$, we have that $|Y(jh)-z_j|\le \tilde{H}$.
\end{thm}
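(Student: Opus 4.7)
The plan is to do induction on $j$, simultaneously carrying two assertions: (I) $Y(t)$ is defined on $[0,jh]$ and stays in $U_2$ throughout, and (II) $e_j := |Y(jh)-z_j|$ satisfies $e_j \le \tilde H_j := \frac{p}{hL}\bigl((1+hL)^j-1\bigr)$, where $p := M h^{m+1}/(m+1)! + H$. At the end the elementary inequality $(1+hL)^k \le e^{hLk}$ converts $\tilde H_k$ into the $\tilde H$ stated in the theorem.

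A short preliminary step establishes $Y^{(k+1)}(t)=F_k(Y(t))$ for $k=0,\dots,m$ (with the convention $F_0=f$), by induction using the chain rule and the recursive definition $F_k=DF_{k-1}f$. Combined with the hypothesis that the partial derivatives of $f$ of order at most $m+2$ are continuous on an open set containing $\overline{U_2}$, Taylor's theorem with Lagrange remainder gives, whenever $Y([ih,(i+1)h])\subset U_2$,
\[
Y((i+1)h)=Y(ih)+\sum_{k=0}^{m-1} F_k(Y(ih))\,\frac{h^{k+1}}{(k+1)!}+F_m(Y(\xi))\,\frac{h^{m+1}}{(m+1)!}
\]
for some $\xi \in [ih,(i+1)h]$, and the remainder is bounded by $M h^{m+1}/(m+1)!$ via $|F_m(u)|\le M$.

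For the inductive step, assume (I) and (II) at $j=i$. Since $U_2$ is a convex box containing both $z_i\in U_1$ and $Y(ih)$ (with $|Y(ih)-z_i|\le \tilde H$), the entire segment from $Y(ih)$ to $z_i$ lies in $U_2$. Applying the fundamental theorem of calculus to each $F_k$ along this segment, together with the preceding lemma $|Av|\le |A|\,|v|$, I get $|F_k(Y(ih))-F_k(z_i)|\le K_k\,|Y(ih)-z_i|$. Subtracting the method's formula for $y_{i+1}$ from the Taylor expansion and collecting terms yields
\[
|Y((i+1)h)-y_{i+1}| \le (1+hL)\,e_i + \frac{M h^{m+1}}{(m+1)!},
\]
so by the triangle inequality and $|y_{i+1}-z_{i+1}|\le H$,
\[
e_{i+1}\le (1+hL)\,e_i + p.
\]
Lemma \ref{tsq} applied with $q_0=e_0=0$ and $L$ replaced by $hL$ then gives $e_{i+1}\le \tilde H_{i+1}$, advancing assertion (II).

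The delicate point, and the main obstacle, is advancing assertion (I): the bounds on $DF_k$ and $F_m$ are worthless unless we already know $Y$ exists on $[ih,(i+1)h]$ and stays in $U_2$. For this I would argue coordinate-wise: as long as $Y$ remains in $U_2$, $|f_\ell(Y)|\le M_0$ gives $|Y_\ell(t)-Y_\ell(ih)|\le M_0 h$ on $[ih,(i+1)h]$; combined with $|Y(ih)-z_i|\le \tilde H$ from the inductive hypothesis and $z_i\in U_1$, each coordinate of $Y(t)$ lies within $\tilde H + M_0 h < \epsilon$ of the box $U_1$, hence inside $U_2$. A standard non-escape/continuation argument for ODEs with continuous right-hand side on $U_2$ then guarantees that $Y$ actually extends to $(i+1)h$ without leaving $U_2$, which closes the induction and completes the proof.
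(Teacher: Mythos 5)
Your proposal follows essentially the same route as the paper: you establish the Lipschitz bounds from $|DF_k|\le K_k$ on the convex box $U_2$, apply Taylor's theorem with Lagrange remainder to get the local truncation error $Mh^{m+1}/(m+1)!$, derive the one-step recursion $e_{i+1}\le(1+hL)e_i+p$, solve it with Lemma \ref{tsq} and the inequality $(1+hL)^k\le e^{hLk}$, and close the induction by a non-escape argument showing that $M_0h+\tilde H<\epsilon$ prevents $Y$ from leaving $U_2$ over one step. The paper's version of that last step is the same estimate packaged as a contradiction at a first-exit time $\tau^\star$, so the two arguments coincide up to presentation.
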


\begin{proof}
Let us start by checking that for any pair of points $u_1$ and $u_2$ in $U_2$ we have that  $|f(u_2)-f(u_1)|\le K_0 |u_2-u_1|$ and $|F_i(u_2)-F_i(u_1)|\le K_i |u_2-u_1|$ for $i=1,\dots m-1$. We have that 

\begin{eqnarray*}
|f(u_2)-f(u_1)|&=&\left|\int_0^1\frac{df(u_1+t(u_2-u_1))}{dt}\, dt \right|\\
  &=&\left|\int_0^1 Df(u_1+t(u_2-u_1))\,  (u_2-u_1) \, dt \right|\le K_0 |u_2-u_1|
\end{eqnarray*}

The proof of the  inequalities $|F_i(u_2)-F_i(u_1)|\le K_i |u_2-u_1|$, for $i=1,\dots,m-1$ is similar. For any non negative integer $l\le  k$  such that $lh$ is in the domain of $Y$ let us denote by $x_l=Y(lh)$. Notice that anytime $Y(t)$ is  in the closure of $U_2$ for all $t\in[0,t_1]$, then it is possible to extend $Y(t)$ to an interval of the form $[0,t_1+\delta]$ with $\delta>0$. We 
will show that for any $l\le k$, $Y(t)$ is defined on $[0,lh]$, $Y(t)\in U_2$ for all $t\in[0,hl]$ and  $|Y(lh)-z_l| \le \tilde{H}$. Clearly the result hold for $l=0$ because $|x_0-z_0|=0$. Let us assume that $Y(t)$ is defined for all $t\in[0,jh]$ with  $Y(t)\in U_2$ and $|x_l-z_l|<\tilde{H}$ for all $0\le l\le j<k$, we will prove the theorem by showing that $Y(t)$ is defined for all $t\in [0,(j+1)h]$ with $Y(t)\in U_2$ and $|x_{j+1}-z_{j+1}| < \tilde{H}$.  Let us show that  for any $t\in [jh,(j+1)h]$, $Y(t)\in U_2$.  Since $Y(t)$ is continuous and $U_2$ is open, then $Y(hj+\tau)\in U_2$ for small positive values of $\tau$. Let us show by contraction that $Y(hj+\tau)\in U_2$ for all $\tau\in [0,h]$. If $Y(hj+\tau)\notin U_2$ for all $\tau\in [0,h]$, the we can find $\tau^\star$ such that $Y(jh+\tau)\in U_2$ for all $\tau\in (0,\tau^\star)$ and $Y(jh+\tau^\star)\notin U_2$. Therefore, writing $Y=(Y_1,\dots, Y_n)$, we can find a positive integer $w\le n$ such that either  $Y_w(jh+\tau^\star)=b_w-\epsilon$ or   $Y_w(jh+\tau^\star)=c_w+\epsilon$ and $Y_e(jh+\tau)\in (b_e-\epsilon,c_e+\epsilon)$ for all positive integers $e\le n$ and all $\tau\in(0,\tau^\star)$. Denoting $z_j=({z_j}_1,\dots,{z_j}_n)$, we have that for some $\xi\in (0,\tau^\star)$,

\begin{eqnarray*}
|Y_w(jh+\tau^\star)-{z_j}_w|&\le&|Y_w(jh+\tau^\star)-Y_w(jh)|+|Y_w(jh)-{z_j}_w|\\
     &\le&|\dot{Y}_w(hj+\xi)| \tau^\star + \tilde{H}  \\
 &\le&   |f_w(Y(jh+\xi))|\tau^\star+\tilde{H}\\
 &<&  M_0 h+\tilde{H}<\epsilon
\end{eqnarray*}

Since $z_j\in U_1$, the inequality above contradicts the fact that either  $Y_w(jh+\tau^\star)=b_w-\epsilon$ or   $Y_w(jh+\tau^\star)=c_w+\epsilon$. This contradiction shows that $Y(t)\in U_2$ for all $t\in [0,(j+1) h]$, in particular $x_{j+1}\in U_2$. Let us prove now that $|x_{j+1}-z_{j+1}|<\tilde{H}$.

 For any $i=0,\dots, j+1$ we have that 
\begin{eqnarray*}
|x_{i+1}-z_{i+1}|&\le& |x_{i+1}-y_{i+1}|+|y_{i+1}-z_{i+1} |\\
&\le&|x_i+f(x_i)h+F_1(x_i)\frac{h^2}{2!}+\dots+F_{m-1}(x_i)\frac{h^m}{m!}-y_{i+1}|+\frac{Mh^{m+1}}{(m+1)!}+H\\
&\le& |x_i-z_i| \left(  1+ h L\right)+\frac{Mh^{m+1}}{(m+1)!}+H
\end{eqnarray*}

Let us define $p=\frac{Mh^{m+1}}{(m+1)!}+H$, $q_0=0$ and $q_{i+1}=(1+hL)q_i+p$. By induction we can show that $|x_i-z_i|\le q_i$ for all $i\le j+1$. Using Lemma (\ref{tsq}) we obtain that 

\begin{eqnarray*}
|Y((j+1)h)-z_{j+1}|&=&|x_{j+1}-z_{j+1}|\le  \frac{p}{hL}\left((1+hL)^{j+1}-1\right)\\
    &\le & \frac{p}{hL}(\hbox{e}^{Lhk}-1)= \tilde{H}
\end{eqnarray*}

This finishes the proof.

\end{proof}

The following theorem is well known. A reference for the particular case when $f$ and $g$ are Lipschitz real value functions can be found at Earl A. Coddington, An Introduction to ordinary differential equations -Dover Publication - 1989. A reference for a more general case can be found at Herbert Amann, Ordinary Differential Equations An Introduction to Nonlinear Analysis - Walter de Gruyter - 1990.

\begin{thm}\label{error2} Let us assume that $f,g:U\subset R^n\to R^n$ are $C^1$ functions defined on an open convex set $U$ such that $|Df(u)|<K$, $|Dg(u)|<K$ and $|f(u)-g(u)|<\epsilon$. If $y(t)$ and $z(t)$ satisfy
$\dot{y}(t)=f(y(t))$ and $\dot{z}(t)=g(z(t))$ respectively, then

$$|y(t)-z(t)| \le |y(t_0)-z(t_0)|\hbox{e}^{K|t-t_0|}+\frac{\epsilon}{K}\left(\hbox{e}^{K|t-t_0|}-1 \right) $$

\end{thm}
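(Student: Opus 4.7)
The plan is to reduce the statement to a Grönwall-type integral inequality for $\phi(t) := |y(t) - z(t)|$ and then solve it in closed form, mirroring the discrete recursion of Lemma \ref{tsq}. Without loss of generality I may assume $t \ge t_0$; the case $t \le t_0$ follows by reversing time.

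First, I would subtract the two integrated ODEs to get
$$y(t) - z(t) = y(t_0) - z(t_0) + \int_{t_0}^{t}\bigl(f(y(s)) - g(z(s))\bigr)\,ds,$$
and split the integrand as $f(y(s)) - g(z(s)) = \bigl(f(y(s)) - f(z(s))\bigr) + \bigl(f(z(s)) - g(z(s))\bigr)$. For the first piece I would reuse the mean-value computation from the proof of Theorem \ref{error}, valid because $U$ is convex and $|Df| < K$: writing $f(y(s)) - f(z(s)) = \int_0^1 Df(z(s) + \tau(y(s)-z(s)))(y(s)-z(s))\,d\tau$ and invoking the matrix-norm inequality established earlier gives $|f(y(s)) - f(z(s))| \le K\,|y(s)-z(s)|$. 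The hypothesis $|f - g| < \epsilon$ bounds the second piece by $\epsilon$. Taking norms in the integral identity and writing $\phi_0 := |y(t_0) - z(t_0)|$, I obtain
$$\phi(t) \le \phi_0 + \int_{t_0}^{t}\bigl(K\,\phi(s) + \epsilon\bigr)\,ds.$$

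Next I would apply the standard Grönwall argument. Let $\psi(t)$ denote the right-hand side above. Then $\phi \le \psi$, $\psi(t_0) = \phi_0$, and $\psi'(t) = K\,\phi(t) + \epsilon \le K\,\psi(t) + \epsilon$. Multiplying by the integrating factor $e^{-K(t-t_0)}$ and integrating from $t_0$ to $t$, I get
$$\psi(t) \le \phi_0\,e^{K(t-t_0)} + \frac{\epsilon}{K}\bigl(e^{K(t-t_0)} - 1\bigr),$$
which is exactly the claimed bound on $\phi(t)$. Note the formal parallel with Lemma \ref{tsq}: replacing the discrete recursion $q_{i+1} = (1 + hL)q_i + p$ by the continuous inequality $\psi' \le K\psi + \epsilon$ turns the geometric-series closed form into the exponential one.

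The only real subtlety is ensuring that $y(s)$ and $z(s)$ both stay in the convex set $U$ for all $s$ in the integration interval, so that the mean-value estimate for $f$ along the segment joining them is legitimate. If one reads the hypotheses as tacitly assuming both solutions are defined on the relevant interval and remain in $U$, the argument is immediate; otherwise one first restricts to the maximal subinterval on which this holds, applies the bound there, and uses continuity to extend. Apart from this bookkeeping issue, the proof is entirely routine and is the continuous analogue of the discrete estimate already proven in Theorem \ref{error}.
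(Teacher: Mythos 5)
Your proof is correct, but there is nothing in the paper to compare it against: the author explicitly declines to prove Theorem~\ref{error2}, calling it ``well known'' and pointing to Coddington (for the one-dimensional Lipschitz case) and Amann (for the general case) as references. So the paper makes no argument of its own, and your Gr\"onwall-type derivation is the expected one. The decomposition $f(y)-g(z) = (f(y)-f(z)) + (f(z)-g(z))$, the mean-value/convexity estimate for the Lipschitz term, the resulting integral inequality $\phi(t)\le\phi_0+\int_{t_0}^t(K\phi(s)+\epsilon)\,ds$, and the integrating-factor step are all standard and correct, and your observation that this is the continuous analogue of the discrete recursion in Lemma~\ref{tsq} is exactly the right way to see it within the paper's own framework. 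You are right that the only delicate point is that both trajectories must remain in the convex set $U$ on the interval considered; the paper simply takes this for granted, and your handling (restrict to the maximal subinterval, then extend) is the honest way to close it.
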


%%%%%%%%%%%%%%%%%%%%
\section{On the implicit function theorem}\label{ift}
%%%%%%%%%%%%%%%%%%%%

Let us consider the set  $\Sigma=\{x=(x_1,x_2,x_3)\, :\, f_1(x)=0,\quad  f_2(x)=0 \}$ where $f_1$ and $f_2$ are real value functions defined on an open set of $R^3$. By the Implicit Function Theorem we know that if $p_0\in \Sigma$ and $\nabla f_1(p_0)\times \nabla f_2(p_0)$ is not the zero vector then there exists an open set $U$ that contains $p_0$ such that $\Sigma\cap U$ is given a regular connected curve. The following lemma give us an estimate on how big this open set $U$ can be, under the assumption that we know that there is point in $\Sigma$ in a small box with dimensions $2\mu_1$, $2\mu_2$ and $2\mu_3$.

\begin{thm}\label{iftt}
Let us assume that  $f_1,f_2:R^3\to R$ are smooth functions, $\mu_1$, $\mu_2$, $\mu_3$, $\delta_1$, $\epsilon_1$, $\epsilon_2$, $\tilde{\epsilon}_1$, $\tilde{\epsilon}_2$, $\delta_2$, $\tilde{\delta}_1$, $\tilde{\delta}_2$ are positive numbers such that for $i=1,2$, $\mu_i<\epsilon_i<\delta_i$, $\rho_i=\frac{\tilde{\epsilon_i}-\mu_i}{m_i}-\mu_3>0$ where $m_1=\frac{ \epsilon_1 (\tilde{\delta}_2+\epsilon_2)}{\delta_1 \delta_2-\epsilon_1\epsilon_2} $ and $m_2=\frac{ \epsilon_2 (\tilde{\delta}_1+\epsilon_1)}{\delta_1 \delta_2-\epsilon_1\epsilon_2} $ and

$$U=\{(x,y,z): |x|<\tilde{\epsilon}_1,\, |y|<\tilde{\epsilon}_2,\,   |z|<\rho  \} \,\hbox{where}\, \rho< \hbox{min}\{\rho_1,\rho_2 \}\, .$$

If  there exists $p_0=(x_0,y_0,z_0)$ such that $f_1(p_0)=f_2(p_0)=0$ with $|x_0|<\mu_1$, $|y_0|<\mu_2$, $|z_0|<\mu_3$ and
 for every $p\in U$ we have that

$$\delta_1<|\frac{\partial f_1}{\partial x_1}|<\tilde{\delta}_1,\, |\frac{\partial f_1}{\partial x_2}|<\epsilon_1,\, |\frac{\partial f_1}{\partial x_3}|<\epsilon_1,\, \delta_2<|\frac{\partial f_2}{\partial x_2}|<\tilde{\delta}_2,\, |\frac{\partial f_2}{\partial x_1}|<\epsilon_2,\, |\frac{\partial f_2}{\partial x_3}|<\epsilon_2$$

then, for every $z_1$ with $|z_1|<\rho$, there exists a unique $(x_1,y_1)$ such that  $(x_1,y_1,z_1)\in U$ is a solution of the equations $f_1(x,y,z)=0,\, f_2(x,y,z)=0$.
\end{thm}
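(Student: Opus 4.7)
The plan is to reduce the problem to solving a two-dimensional system for each fixed $z$, and to obtain both existence and a Lipschitz bound on the solution curve by integrating an auxiliary ODE in $z$. Throughout I will work with the $2\times 2$ matrix
$$M(p) = \begin{pmatrix} \partial_{x_1} f_1(p) & \partial_{x_2} f_1(p) \\ \partial_{x_1} f_2(p) & \partial_{x_2} f_2(p) \end{pmatrix}.$$
Since $U$ is connected and $\partial_{x_1}f_1$, $\partial_{x_2}f_2$ never vanish there, each of them has a fixed sign on $U$, so $|\det M| \ge \delta_1\delta_2 - \epsilon_1\epsilon_2 > 0$ on all of $U$; the positivity of this quantity is implicit in the definitions of $m_1$ and $m_2$.

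For existence I will consider the initial value problem
$$\begin{pmatrix} x'(z) \\ y'(z) \end{pmatrix} = -M(x,y,z)^{-1} \begin{pmatrix} \partial_{x_3} f_1(x,y,z) \\ \partial_{x_3} f_2(x,y,z) \end{pmatrix}, \qquad (x(z_0), y(z_0)) = (x_0, y_0),$$
obtained by formally differentiating $f_1(x(z),y(z),z)=0$ and $f_2(x(z),y(z),z)=0$ in $z$. Applying Cramer's rule and the assumed bounds on the partial derivatives yields $|x'(z)| \le m_1$ and $|y'(z)| \le m_2$ as long as the trajectory stays in $U$. Combined with $|x_0|<\mu_1$, $|y_0|<\mu_2$, $|z_0|<\mu_3$, this gives $|x(z)| < \mu_1 + m_1(\mu_3+\rho) < \tilde{\epsilon}_1$ and $|y(z)| < \mu_2 + m_2(\mu_3+\rho) < \tilde{\epsilon}_2$ for $|z|<\rho$, which is precisely the content of the inequalities $\rho < \rho_i$. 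A standard continuation argument then extends the solution to all of $(-\rho,\rho)$, and by construction $f_1$ and $f_2$ are constant along the trajectory, hence identically zero.

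For uniqueness at a given $z_1$, I will apply the fundamental theorem of calculus along the segment joining two hypothetical solutions $(x_a, y_a, z_1)$ and $(x_b, y_b, z_1)$ in $U$: subtracting produces a linear system
$$\left(\int_0^1 M(x_b + t(x_a-x_b),\, y_b + t(y_a-y_b),\, z_1)\,dt\right)\begin{pmatrix} x_a-x_b \\ y_a-y_b \end{pmatrix} = 0.$$
Because each integrand preserves the sign and bound estimates used above, the averaged matrix inherits $|\det| \ge \delta_1\delta_2-\epsilon_1\epsilon_2 > 0$, forcing $(x_a, y_a) = (x_b, y_b)$.

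The most delicate step will be the continuation argument in the existence paragraph: I must verify that $(x(z), y(z), z)$ remains strictly inside the open set $U$ so that the determinant bound, and therefore the local Picard extension, keep propagating all the way to $|z|=\rho$. This reduces to unwinding the strict inequalities $\mu_i + m_i(\mu_3+\rho) < \tilde{\epsilon}_i$ that are baked into the definitions of $\rho_1$ and $\rho_2$, together with the observation that the constant sign of $\partial_{x_1}f_1$ and $\partial_{x_2}f_2$ is automatically preserved under continuation inside the connected open set $U$.
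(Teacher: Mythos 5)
Your proposal is correct and is, at heart, the same argument the paper gives, merely phrased in a slightly different language. The paper considers the integral curve of the vector field $\xi=\nabla f_1\times\nabla f_2$ through $p_0$ and uses $|\xi_3|\ge\delta_1\delta_2-\epsilon_1\epsilon_2>0$ to reparameterize this curve by $z$; the slope bounds $|\dot x/\dot z|=|\xi_1/\xi_3|\le m_1$, $|\dot y/\dot z|=|\xi_2/\xi_3|\le m_2$ are identical to the estimates you derive by Cramer's rule, since $\xi_3=\det M$ and $\xi_1,\xi_2$ are exactly the cofactors appearing in $-M^{-1}(\partial_{x_3}f_1,\partial_{x_3}f_2)^T$. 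Your implicit-ODE formulation makes it a bit more transparent why $f_1,f_2$ stay zero along the curve (the ODE was designed precisely so that $\frac{d}{dz}f_i\equiv0$), whereas the paper leaves that implicit in the statement that $\xi$ is tangent to the level set. For uniqueness the two arguments also differ cosmetically: the paper picks whichever of $|x_a-x_b|$, $|y_a-y_b|$ is larger and applies a one-dimensional mean-value estimate to the corresponding $f_i$ (using $\epsilon_i<\delta_i$), while you package the same information into the nonvanishing determinant of the averaged $2\times2$ Jacobian; both work because $U$ is a box, hence convex. In short: same decomposition, same estimates, same exit-face continuation argument, with your version perhaps exposing the linear algebra a little more cleanly.
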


\begin{proof}
Let us denote by $\xi=(\xi_1,\xi_2,\xi_3)=\nabla f_1\times \nabla f_2$. Direct computations show that $|\xi_1|\le \epsilon_1(\tilde{\delta}_2+\epsilon_2)$, $|\xi_2|\le \epsilon_2(\tilde{\delta}_1+\epsilon_1)$,  and $|\xi_3|\ge \delta_1 \delta_2-\epsilon_1 \epsilon_2$. Let $\alpha(t)=(x(t),y(t),z(t))$ be the integral curve of the vector field $\xi$ such that $\alpha(0)=p_0$. Since $|\xi_3|>0$ we have that the vector field $\xi$ never vanishes on $U$, therefore there exist $T_2>0$ and $T_1<0$  such that $\alpha(t)\in U$ for all $t\in (T_1,T_2)$ and $\alpha(T_1)$ and $\alpha(T_2)$ are in the boundary of $U$, in particular we have that either $x(T_2)=\pm \tilde{\epsilon}_1$, $y(T_2)=\pm \tilde{\epsilon}_2$ or  $z(T_2)=\pm \rho$. We will prove that $z(T_2)$ must be either $\rho$ or $-\rho$. Using the fact that $\dot{x}=\xi_1(\alpha(t))$,  $\dot{y}=\xi_2(\alpha(t))$ and  $\dot{z}=\xi_3(\alpha(t))$ we obtain that,

\begin{eqnarray}\label{slope}
|\frac{\dot{x}}{\dot{z}}|\le \frac{ \epsilon_1 (\tilde{\delta}_2+\epsilon_2)}{\delta_1 \delta_2-\epsilon_1\epsilon_2}=m_1\quad \hbox{and}\quad
 |\frac{\dot{y}}{\dot{z}}|\le \frac{ \epsilon_2 (\tilde{\delta}_1+\epsilon_1)}{\delta_1 \delta_2-\epsilon_1\epsilon_2}=m_2
\end{eqnarray} 

Recall that  $z(t)$ is one to one because its derivative never vanishes. Let us denote by $h$ the inverse function of $z$. We have that $h(z(t))=t$.  If $g_1(\tau)=x(h(\tau))$, then  $g_1(z_0)=x_0$ and $|g_1^\prime(\tau)|<\frac{ \epsilon_1 (\tilde{\delta}_2+\epsilon_2)}{\delta_1 \delta_2-\epsilon_1\epsilon_2}$. Therefore,

$$|x(T_2)-x_0|=|g(z(T_2))-g_1(z_0)|=|g^\prime(\tau^\star)||z(T_2)-z_0|  \le m_1|z(T_2)|+m_1\mu_3$$

and then $|x(T_2)|<\mu_1+m_1\rho_1+m_1\mu_3=\tilde{\epsilon}_1$. Likewise we can show that $|y(T_2)|<\tilde{\epsilon}_2$. Therefore we must have $|z(T_2)|=\rho$. The same arguments show that $|z(T_1)|$ must be either $\rho$ or $-\rho$. Since the function $\dot{z}(t)\ne0$ and $[T1,T_2]$ is connected then we get that for every $z_1$ with $|z_1|<\rho$, there exists a $p_1=(x_1,y_1,z_1)=\alpha(t_1)\in U$ such that $f_1( p_1)=0$ and $f_2(p_1)=0$. Let us prove that  $p_1$ is unique. If $p_2=(x_2,y_2,z_1)\in U$ and $p_2\ne p_1$, then either $|x_2-x_1|\le |y_2-y_1|\ne0$ or $|y_2-y_1|\le |x_2-x_1|\ne0$. If $|x_2-x_1|\le |y_2-y_1|\ne0$. Since we are assuming that $f_2(x_1,x_2,x_3)=0$ then, 

\begin{eqnarray*}
|f_2(x_2,y_2,z_1)|&\ge&  |f_2(x_2,y_2,z_1)-f_2(x_2,y_1,z_1)|- |f_2(x_2,y_1,z_1)-f_2(x_1,y_1,z_1)|\\
   &\ge&  |y_2-y_1|\delta_2-|x_2-x_1| \epsilon_2 >0
\end{eqnarray*}   

Likewise we can show that if $|y_2-y_1|\le |x_2-x_1|\ne0$, then $|f_1(p_2)|>0$. Therefore the only solution of the system $f_1(p)=f_2(p)=0$ on the set $U$ are those points in the curve $\alpha:[T_1,T_2]\to U$. This finishes the proof.
\end{proof}

For any $u=(u_1,u_2,u_3)$ and $v=(v_1,v_2,v_3)$, let us denote by $u\cdot v=u_1v_1+u_2v_2+u_3v_3$ the dot product of $u$ and $v$. Given a function $f:R^3\to R$, we  denote  $D_vf_i=\nabla f_i\cdot v$.  By doing an orthogonal change of coordinates we have the following corollary,

%%%%%%%%%%%%%%%%%%%%
\section{The differential equation, a symmetry result and the Poincare Miranda Theorem}\label{sm}
%%%%%%%%%%%%%%%%%%%%

Let us start introducing the differential equation for the subfamily of solutions of the three body that we are considering in this paper.

\begin{prop} If $f$, $r$ and $\theta$  satisfy the initial value system of differential equations

\begin{eqnarray}\label{e1}
\ddot{f}=-\frac{400}{s^3}f,\, \quad \ddot{r}=\frac{100 a^2}{r^3}-\frac{25}{ r^2}-\frac{200 r}{s^3},
\, r^2\dot{\theta}=10 a,
\end{eqnarray}
with $\, r(0)=10, \, f(0)=0,\, \dot{r}(0)=0,\, \dot{f}(0)=b,\, \theta(0)=0$, where  $s=\sqrt{r^2+4f^2}$ 

then, 

\begin{eqnarray*}
x(t)&=&(0,0,f(t))\\
y(t)&=&(r(t) \cos(\theta(t)),r(t) \sin(\theta(t)),- f(t))\\
z(t)&=&(-r(t) \cos(\theta(t)),-r(t) \sin(\theta(t)),-f(t))\, ,
\end{eqnarray*}

is a solution of the 3-Body problem with the mass for the body moving according to $x(t)$ equal $200$ and the masses of the other two bodies equal $100$. We are assuming that the gravitational constant is 1.
\end{prop}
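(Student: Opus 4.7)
The plan is to substitute the ansatz directly into Newton's equations
\[ m_i \ddot{x}_i = \sum_{j \neq i} \frac{m_i m_j (x_j - x_i)}{|x_j - x_i|^3} \]
for $i = 1, 2, 3$ with $m_1 = 200$ and $m_2 = m_3 = 100$, and verify that each vector equation reduces to (or is implied by) the scalar system for $(f, r, \theta)$. A first computation records the pairwise separations: $|y - x| = |z - x| = \sqrt{r^2 + 4 f^2} = s$ and $|z - y| = 2r$. Next observe that the configuration is invariant under the half-turn $R$ about the $z$-axis, which fixes body 1 and swaps bodies 2 and 3; since the initial condition satisfies $R\,y(0) = z(0)$ and $R\,\dot{y}(0) = \dot{z}(0)$, and the Newtonian system is $R$-equivariant when $m_2 = m_3$, the equation for body 3 will follow automatically from the equation for body 2. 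So it suffices to check the equations for bodies 1 and 2.

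For body 1, the horizontal components of the right-hand side cancel pairwise because the horizontal parts of $y - x$ and $z - x$ are opposite, and the $z$-component gives $\ddot{f} = -400\,f/s^3$, which matches the first ODE. For body 2, I would decompose the acceleration in cylindrical coordinates $(r, \theta, y_3)$. The vertical component yields $-\ddot{f} = 400\,f/s^3$, which is consistent with the equation for body 1. The tangential component vanishes because both $x - y$ and $z - y$ lie in the vertical plane through $y$ and the $z$-axis; hence $\tfrac{d}{dt}(r^2 \dot{\theta}) = 0$, so $r^2 \dot\theta$ is a constant of motion whose value $10a$ encodes the parameter $a$ through the initial angular velocity $\dot\theta(0) = a/10$. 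Finally, the radial component gives
\[ \ddot{r} - r \dot{\theta}^2 = -\frac{200 r}{s^3} - \frac{25}{r^2}, \]
and substituting $r \dot\theta^2 = 100 a^2/r^3$ yields the stated ODE for $\ddot{r}$; note that $|z - y|^3 = 8 r^3$ in the denominator is what produces the coefficient $25$ in the $-25/r^2$ term.

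The main obstacle is only careful bookkeeping of signs and factors in the cylindrical decomposition; no substantive step is expected to be difficult. I would close by checking that the prescribed values $r(0) = 10$, $f(0) = 0$, $\theta(0) = 0$, $\dot{r}(0) = 0$, $\dot{f}(0) = b$, together with $\dot\theta(0) = a/10$ extracted from $r^2 \dot\theta = 10 a$, reproduce exactly the initial positions $(0,0,0)$, $(10,0,0)$, $(-10,0,0)$ and initial velocities $(0,0,b)$, $(0, a, -b)$, $(0, -a, -b)$ described in the introduction.
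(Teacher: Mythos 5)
Your proposal is correct and takes essentially the same approach as the paper: direct substitution of the ansatz into Newton's equations, using $|x-y|=|x-z|=s$ and $|y-z|=2r$, then verifying the three vector equations component by component. The paper's proof is a one-line "we can check" with a reference elsewhere for details, whereas you actually carry out the cylindrical decomposition (radial, tangential, vertical) for body 2 and invoke the half-turn symmetry to dispose of body 3; this fills in precisely the computation the paper elides, and your treatment of the tangential component (constancy of $r^2\dot\theta$, giving $r\dot\theta^2=100a^2/r^3$) and of the $-25/r^2$ term via $|z-y|^3=8r^3$ is right.
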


\begin{proof}  Notice that $|x-y|=|x-z|=h$ and $|y-x|=2r$, we can check that  if $f$ and $r$ satisfy the ODE, then $\ddot{x}(t)=100\frac{y-x}{|y-x|^3}+100\frac{z-x}{|z-x|^3}$,  $\ddot{y}(t)=200\frac{x-y}{|x-y|^3}+100\frac{z-y}{|z-y|^3}$ and $\ddot{z}(t)=200\frac{x-z}{|x-z|^3}+100\frac{y-z}{|y-z|^3}$.  For more details see \cite{P} or \cite{M} for example.
\end{proof}

\begin{lem}\label{per}
Let us assume that $f$ and $g$ satisfies the ordinary differential equation 
\begin{eqnarray*}
\ddot{f}&=&\phi(f,g)\\
\ddot{g}&=&\xi(f,g)
\end{eqnarray*}

with $f(0)=0$, $\dot{f}(0)=a$, $g(0)=r>0$ and $\dot{g}(0)=0$ and $\phi$ and $\xi$ smooth functions. If $\phi(-f,g)=-\phi(f,g)$  and $\xi(-f,g)=\xi(f,g)$  then, $f(t)$ is odd and $g(t)$ is even. 
\end{lem}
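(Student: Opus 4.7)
The plan is to use uniqueness of solutions of the initial value problem to show that the obvious "reflected" candidates coincide with $f$ and $g$. Since $\phi$ and $\xi$ are smooth, the system is locally Lipschitz, so any two $C^2$ solutions satisfying the same initial conditions must coincide on their common domain.

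Concretely, I would introduce the auxiliary functions $\tilde f(t) = -f(-t)$ and $\tilde g(t) = g(-t)$, defined on the symmetric reflection of the original domain. A direct computation gives $\tilde f(0) = -f(0) = 0$ and $\tilde g(0) = g(0) = r$; differentiating once yields $\dot{\tilde f}(t) = \dot f(-t)$ and $\dot{\tilde g}(t) = -\dot g(-t)$, so $\dot{\tilde f}(0) = a$ and $\dot{\tilde g}(0) = 0$. Thus $(\tilde f,\tilde g)$ satisfies the same initial data as $(f,g)$.

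Next I would verify that $(\tilde f,\tilde g)$ satisfies the same ODE system. Differentiating a second time gives $\ddot{\tilde f}(t) = -\ddot f(-t)$ and $\ddot{\tilde g}(t) = \ddot g(-t)$. Using the ODE for $(f,g)$ and the assumed symmetries,
\begin{align*}
\ddot{\tilde f}(t) &= -\phi(f(-t),g(-t)) = \phi(-f(-t),g(-t)) = \phi(\tilde f(t),\tilde g(t)),\\
\ddot{\tilde g}(t) &= \xi(f(-t),g(-t)) = \xi(-f(-t),g(-t)) = \xi(\tilde f(t),\tilde g(t)),
\end{align*}
so $(\tilde f,\tilde g)$ solves the same initial value problem as $(f,g)$.

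By uniqueness of solutions to the IVP (applied to the first-order system obtained by adjoining the velocities $\dot f,\dot g$ as additional state variables, which is locally Lipschitz thanks to smoothness of $\phi$ and $\xi$), we conclude $\tilde f \equiv f$ and $\tilde g \equiv g$, i.e.\ $f(-t) = -f(t)$ and $g(-t) = g(t)$. The only subtlety is making sure the domain is symmetric about $0$, which follows because the maximal interval of existence of the unique solution is invariant under the same symmetry argument applied to both candidates. No step is really an obstacle here; the entire proof is the standard "uniqueness plus a reflection symmetry of the vector field" trick.
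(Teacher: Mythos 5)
Your proof is correct and follows essentially the same argument as the paper: define $\tilde f(t) = -f(-t)$, $\tilde g(t) = g(-t)$, check they satisfy the same ODE and initial conditions using the symmetry hypotheses on $\phi$ and $\xi$, and invoke uniqueness. The only (harmless) addition is your explicit remark about symmetry of the maximal interval of existence, which the paper leaves implicit.
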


\begin{proof} Let us consider the functions $\tilde{f}(t)=-f(-t)$ and $\tilde{g}(t)=g(-t)$. A direct computation shows that $\tilde{f}(0)=0$, $\dot{\tilde{f}}(0)=a$, $\tilde{g}(0)=r$ and  $\dot{\tilde{g}}(0)=0$. Moreover we have that 

$$\ddot{\tilde{f}}(t)=-\ddot{f}(-t)=-\phi(f(-t),g(-t))= -\phi(-\tilde{f}(t),\tilde{g}(t))=\phi(\tilde{f}(t),\tilde{g}(t))$$

and

$$\ddot{\tilde{g}}(t)=\ddot{g}(-t)=\xi(f(-t),g(-t))= \xi(-\tilde{f}(t),\tilde{g}(t))=\xi(\tilde{f}(t),\tilde{g}(t))$$

By the uniqueness of the solutions of ordinary differential equations we get that $f(t)=\tilde{f}(t)=-f(-t)$ and $g(t)=\tilde{g}(t)=g(-t)$. This finishes the proof.

\end{proof}

\begin{lem}\label{per2}
Let us assume that $f$ and $g$ satisfies the ordinary differential equation 
\begin{eqnarray*}
\ddot{f}&=&\phi(f,g)\\
\ddot{g}&=&\xi(f,g)
\end{eqnarray*}

If $\dot{f}(0)=0$  and $\dot{g}(0)=0$ and $\phi$ and $\xi$ smooth functions, then, $f(t)$ and $g(t)$ are even. 
\end{lem}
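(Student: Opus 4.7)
The plan is to mimic the proof of Lemma \ref{per}, but this time we do not need to flip the sign of $f$ to obtain a second solution of the same initial value problem; the vanishing of both initial velocities $\dot f(0)=\dot g(0)=0$ is exactly what lets us avoid imposing any symmetry hypothesis on $\phi$ and $\xi$.

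Concretely, I would introduce the auxiliary functions $\tilde f(t) = f(-t)$ and $\tilde g(t)=g(-t)$. First I would check the initial data: $\tilde f(0)=f(0)$, $\tilde g(0)=g(0)$, and by the chain rule $\dot{\tilde f}(0) = -\dot f(0) = 0$ and $\dot{\tilde g}(0) = -\dot g(0) = 0$, so $(\tilde f,\tilde g)$ has the same initial position and the same (zero) initial velocity as $(f,g)$. Next, I would compute the second derivatives, again using the chain rule twice, which produces a double minus that cancels:
\begin{eqnarray*}
\ddot{\tilde f}(t) &=& \ddot f(-t) \;=\; \phi(f(-t),g(-t)) \;=\; \phi(\tilde f(t),\tilde g(t)),\\
\ddot{\tilde g}(t) &=& \ddot g(-t) \;=\; \xi(f(-t),g(-t)) \;=\; \xi(\tilde f(t),\tilde g(t)).
\end{eqnarray*}
Thus $(\tilde f,\tilde g)$ solves exactly the same second-order system as $(f,g)$, with identical initial conditions.

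Finally, I would invoke the standard uniqueness theorem for smooth second-order ODE systems (after, if desired, rewriting the system as a first-order system in the variables $(f,g,\dot f,\dot g)$ to apply the Picard--Lindelöf theorem, whose hypotheses are met because $\phi$ and $\xi$ are smooth). Uniqueness yields $\tilde f\equiv f$ and $\tilde g\equiv g$, i.e.\ $f(-t)=f(t)$ and $g(-t)=g(t)$, proving that both are even.

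There is no real obstacle here; the only subtle point, in contrast with Lemma \ref{per}, is that we rely on $\dot f(0)=\dot g(0)=0$ rather than on parity assumptions about $\phi,\xi$. This is precisely what guarantees that the candidate symmetry $(\tilde f,\tilde g)=(f(-\cdot),g(-\cdot))$ (with no sign flip) still agrees with $(f,g)$ at $t=0$ to first order, which is exactly what uniqueness needs.
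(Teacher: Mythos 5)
Your proof is correct and takes essentially the same route as the paper: define $\tilde f(t)=f(-t)$, $\tilde g(t)=g(-t)$, observe they satisfy the same second-order system with the same initial data (using $\dot f(0)=\dot g(0)=0$), and conclude by uniqueness.
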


\begin{proof} Let us consider the functions $\tilde{f}(t)=f(-t)$ and $\tilde{g}(t)=g(-t)$. A direct computation shows that $\tilde{f}(0)=f(0)$, $\dot{\tilde{f}}(0)=0$, $\tilde{g}(0)=g(0)$ and  $\dot{\tilde{g}}(0)=0$. Moreover we have that 

$$\ddot{\tilde{f}}(t)=\ddot{f}(-t)=\phi(f(-t),g(-t))=\phi(\tilde{f}(t),\tilde{g}(t))$$

and

$$\ddot{\tilde{g}}(t)=\ddot{g}(-t)=\xi(f(-t),g(-t))= \xi(\tilde{f}(t),\tilde{g}(t))$$

By the uniqueness of the solutions of ordinary differential equations we get that $f(t)=\tilde{f}(t)=f(-t)$ and $g(t)=\tilde{g}(t)=g(-t)$. This finishes the proof.

\end{proof}

This is the Poincare-Miranda theorem for two variables

\begin{thm} \label{PMT} Let $U$ be an open set in $R^2$ that contains the rectangle $[a_1,a_2]\times [t_1,t_2]$.  Let us further assume that  $F:U\times I\longrightarrow R$ and $G:U\times I\longrightarrow R$ are continuous functions. If 

(i) $F(a,t_1)>0$ and $F(a,t_2)<0$ for all $a\in[a_1,a_2]$,

 (ii) $G(a_1,t)<0$ and $G(a_2,t)>0$  for all $t\in[t_1,t_2]$,

 then, there exists a point $(a_0,t_0)\in [a_1,a_2]\times [t_1,t_2]$ such that $F(a_0,t_0)=0=G(a_0,t_0)$. 
\end{thm}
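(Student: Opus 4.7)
The plan is to argue by contradiction via a winding-number calculation on the boundary of the rectangle $K=[a_1,a_2]\times[t_1,t_2]$. Suppose no $(a_0,t_0)\in K$ satisfies $F(a_0,t_0)=G(a_0,t_0)=0$. Then $V=(F,G)$ is nowhere zero on the compact set $K$, and the normalization $v=V/|V|$ is a well-defined continuous map from $K$ to the unit circle $S^1\subset R^2$.

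First I would read off the image of $v$ on each of the four edges of $\partial K$. Traversing $\partial K$ counterclockwise starting from $(a_1,t_1)$: hypothesis (i) forces $F>0$ on the bottom edge $t=t_1$, so $v$ lies in the open right half-circle; hypothesis (ii) forces $G>0$ on the right edge $a=a_2$, so $v$ lies in the open upper half-circle; hypothesis (i) forces $F<0$ on the top edge $t=t_2$, so $v$ lies in the open left half-circle; hypothesis (ii) forces $G<0$ on the left edge $a=a_1$, so $v$ lies in the open lower half-circle. These four open half-circles appear in cyclic order around $S^1$, so any continuous branch $\theta:\partial K\to R$ of the argument of $v$ must net an increase of exactly $2\pi$ around one loop of $\partial K$. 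Hence $v|_{\partial K}$ has winding number $+1$ about the origin.

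On the other hand, since $v$ is defined on all of the convex (and therefore contractible) set $K$, the restriction $v|_{\partial K}$ is null-homotopic in $S^1$ and must have winding number $0$. This contradiction produces the desired common zero.

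The step demanding the most care is making the winding-number count on $\partial K$ rigorous. I would do this by constructing an explicit continuous argument $\theta$ on each closed edge with values in a closed arc of length $\pi$ determined by the relevant sign condition, and checking that the four jumps $\theta$ must make at the corners (each one advancing from one half-circle to the next in cyclic order) each lie in $(0,\pi)$ and together sum to $2\pi$. A more elementary route that avoids any homotopy or degree theory would be to apply the Brouwer fixed point theorem to the continuous self-map $\phi(a,t)=\mathrm{Proj}_K\bigl(a-\lambda G(a,t),\,t+\lambda F(a,t)\bigr)$ for sufficiently small $\lambda>0$: the sign conditions on the four edges show that no boundary point can be fixed, so any fixed point of $\phi$ lies in the interior of $K$, where the projection is trivial and the fixed-point equation reduces to $F=G=0$.
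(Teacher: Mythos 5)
The paper states Theorem~\ref{PMT} as a known result (the two-dimensional Poincar\'e--Miranda theorem) and supplies no proof, so there is nothing to compare against. Your argument is a correct, standard proof. The winding-number computation is right: traversing $\partial K$ counterclockwise from $(a_1,t_1)$, the normalized field is confined successively to the open right, upper, left, and lower half-circles, and since at each corner both adjacent sign conditions hold the argument lands in the open quarter-circle where consecutive halves overlap; hence a continuous lift $\theta$ advances by an amount in $(0,\pi)$ across each edge, and the total, being a multiple of $2\pi$ lying in $(0,4\pi)$, must equal $2\pi$. (A small wording nit: a continuous $\theta$ has no ``jumps at the corners''; what you are summing are the per-edge increments, pinned at each corner to the relevant overlap arc.) The Brouwer alternative also goes through, and is even cleaner than you suggest: you do not actually need $\lambda$ small, since the nearest-point projection onto $K$ sends exterior points to $\partial K$, so any interior fixed point of $\phi$ already forces the unclamped image to lie in $K$ and hence $F=G=0$; the four sign conditions exactly rule out boundary fixed points of $\phi$ for every $\lambda>0$.
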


%%%%%%%%%%%%%%%%%%%%
\section{The solution of the ODE as a function of the time and the parameters $a$ and $b$.}\label{ws}
%%%%%%%%%%%%%%%%%%%%

The main tool used to show the periodicity of the solution of the three body problem is to understand the functions in the solution of an ODE as functions of $t$, $a$ and $b$. We this in mind we define,

\begin{mydef} \label{tf} We will denote by $F(t,a,b)=f(t)$ and $R(t,a,b)=r(t)$ and $\Theta(t,a,b)=\theta(t)$ the solution of the system (\ref{e1}) with initial conditions 

$$f(0)=0,\quad \dot{f}(0)=b,\quad r(0)=10, \quad \dot{r}(0)=0 \quad \hbox{and} \quad  \theta(0)=0$$ 

We will denote by $\dot{R}=\frac{\partial R}{\partial t}$ and $\dot{F}=\frac{\partial F}{\partial t}$ and in general if $S$ is a function of the variable $(t,a,b)$ then $\dot{S}=\frac{\partial S}{\partial t}$

\end{mydef}

It is well know that the function $F$ and $R$ has continuous partial derivatives \cite{G} and they obey a differential equation. 

\begin{thm} \label{wst}
If we denote by $F_a=\frac{\partial F}{\partial t}$, $F_b=\frac{\partial F}{\partial b}$, $R_a=\frac{\partial R}{\partial a}$ and $R_b=\frac{\partial R}{\partial b}$, then for any fixed values $a$ and $b$, 

\begin{eqnarray*}
Y(t)&=&(F(t,a,b),\dot{F}(t,a,b),R(t,a,b),\dot{R}(t,a,b),F_a(t,a,b),\dot{F_a}(t,a,b),\\
& &R_a(t,a,b),\dot{R_a}(t,a,b),F_b(t,a,b),\dot{F_b}(t,a,b),R_b(t,a,b),\dot{R_b}(t,a,b))
\end{eqnarray*}

satisfies the differential equation $\dot{Y}=P(Y)$ with $P(x_1,\dots,x_{12})=P(x)=(P^1(x),\dots , P^{12}(x))$ where 

\begin{eqnarray*}
P^1(x)&=& x_2\\
P^2(x)&=&-\frac{400 x_1}{\left(4 x_1^2+x_3^2\right){}^{3/2}} \\
P^3(x)&=&x_4 \\
P^4(x)&=& \frac{100 a^2}{x_3^3}-\frac{25}{x_3^2}-\frac{200 x_3}{\left(4 x_1^2+x_3^2\right){}^{3/2}}\\
P^5(x)&=&x_6 \\
P^6(x)&=& \frac{3200 x_5 x_1^2}{\left(4 x_1^2+x_3^2\right){}^{5/2}}+\frac{1200 x_3 x_7 x_1}{\left(4 x_1^2+x_3^2\right){}^{5/2}}-\frac{400 x_3^2 x_5}{\left(4 x_1^2+x_3^2\right){}^{5/2}}\\
P^7(x)&=& x_8\\
P^8(x)&=&-\frac{300 a^2 x_7}{x_3^4}+\frac{200 a}{x_3^3}+\frac{2400 x_1 x_3 x_5}{\left(4 x_1^2+x_3^2\right){}^{5/2}}+\frac{50 x_7}{x_3^3}+\frac{600 x_3^2 x_7}{\left(4 x_1^2+x_3^2\right){}^{5/2}}-\frac{200 x_7}{\left(4 x_1^2+x_3^2\right){}^{3/2}} \\
P^9(x)&=& x_{10}\\
P^{10}(x)&=& \frac{3200 x_9 x_1^2}{\left(4 x_1^2+x_3^2\right){}^{5/2}}+\frac{1200 x_3 x_{11} x_1}{\left(4 x_1^2+x_3^2\right){}^{5/2}}-\frac{400 x_3^2 x_9}{\left(4 x_1^2+x_3^2\right){}^{5/2}}\\
P^{11}(x)&=& x_{12} \\
P^{12}(x)&=& -\frac{300 a^2 x_{11}}{x_3^4}+\frac{2400 x_1 x_3 x_9}{\left(4 x_1^2+x_3^2\right){}^{5/2}}+\frac{50 x_{11}}{x_3^3}+\frac{600 x_3^2 x_{11}}{\left(4 x_1^2+x_3^2\right){}^{5/2}}-\frac{200 x_{11}}{\left(4 x_1^2+x_3^2\right){}^{3/2}}
\end{eqnarray*}

with initial conditions

$$Y(0)=(0,b,10,0,   0,0,0,0,  0, 1,0,0)$$
\end{thm}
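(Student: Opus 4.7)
The plan is to treat the ODE for $f$ and $r$ from (\ref{e1}) (the $\theta$ equation decouples and is ignored here) as a first-order system in the four variables $(f,\dot f,r,\dot r)$, which immediately gives the components $P^1,\dots,P^4$: setting $x_1=F$, $x_2=\dot F$, $x_3=R$, $x_4=\dot R$, the definitions $\dot x_1=x_2$, $\dot x_3=x_4$ are tautological, and $\dot x_2,\dot x_4$ are read off from (\ref{e1}) after replacing $s^3$ by $(4x_1^2+x_3^2)^{3/2}$. The remaining eight components encode how the flow depends on the parameters $a$ and $b$, and I would derive them via a variational equation.

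For the $a$-derivatives, I would invoke the standard smooth-dependence-on-parameters theorem (the reference \cite{G} cited just before the statement) to conclude that $F$ and $R$ are $C^2$ in $(t,a,b)$ on their domain of existence, so mixed partials commute: $\partial_t(\partial_a F)=\partial_a(\partial_t F)$, and similarly with two $t$-derivatives. Differentiating the scalar equation $\ddot F=-400F/(4F^2+R^2)^{3/2}$ with respect to $a$ and substituting $F_a,R_a$ yields $\ddot{F_a}$ as a function of $F,R,F_a,R_a$; a short calculation using $\partial_a(4F^2+R^2)^{-3/2}=-\tfrac{3}{2}(4F^2+R^2)^{-5/2}(8FF_a+2RR_a)$ produces exactly the formula $P^6$ after substitution $x_5=F_a,x_7=R_a$. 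The computation for $\ddot{R_a}$ is analogous; the only subtlety is the explicit $a$-dependence in the term $100a^2/R^3$, which contributes the inhomogeneous piece $200a/x_3^3$ visible in $P^8$, in addition to terms proportional to $F_a$ and $R_a$ coming from the $R$- and $F$-dependence of the right-hand side.

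For the $b$-derivatives the argument is identical except that $b$ does not appear explicitly in the right-hand side of (\ref{e1}); consequently the variational equation for $(F_b,\dot F_b,R_b,\dot R_b)$ is the same linear equation as for the $a$-variables but without the inhomogeneous term $200a/x_3^3$, producing precisely $P^{10}$ and $P^{12}$ under the substitution $x_9=F_b,x_{11}=R_b$. The initial conditions follow from differentiating the stated initial conditions with respect to the parameters: $F(0,a,b)=0$, $\dot F(0,a,b)=b$, $R(0,a,b)=10$, $\dot R(0,a,b)=0$ yield $F_a(0)=\dot F_a(0)=R_a(0)=\dot R_a(0)=0$ and $F_b(0)=R_b(0)=\dot R_b(0)=0$, with the single nonzero entry $\dot F_b(0)=\partial_b b=1$, which is exactly the vector $(0,b,10,0,0,0,0,0,0,1,0,0)$ in the statement.

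There is no genuine obstacle to this proof: once smoothness in parameters is invoked, everything reduces to a direct differentiation of explicit rational/algebraic expressions. The main care required is bookkeeping — tracking the quotient-rule factors of $(4F^2+R^2)^{5/2}$ correctly and not forgetting the $a$-explicit contribution to $\ddot{R_a}$ — and then matching each resulting summand to the corresponding term in the displayed formulas for $P^5,\dots,P^{12}$.
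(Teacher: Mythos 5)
Your proposal is correct and is essentially the argument the paper has in mind: the paper gives no written proof for this theorem, instead relying on the sentence immediately preceding it and the citation to Gronwall's theorem on differentiability with respect to parameters, after which the displayed system is a direct differentiation of (\ref{e1}). Your calculation of the variational equations (including the extra inhomogeneous term $200a/x_3^3$ arising from the explicit $a$-dependence of $\ddot R$, and its absence in the $b$-block) and of the initial vector matches the stated $P$ and $Y(0)$ exactly.
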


%%%%%%%%%%%%%%%%%%%%
\section{Main result}\label{mr}
%%%%%%%%%%%%%%%%%%%%
% $T=10.6935\dots$
In this section we explain the arguments used in the proof of the periodicity of the solution and we will also show how to reduced this proof to the proof of 4 lemmas. With the intension of not cutting the flow of main ideas, we will prove  these lemmas in a different section.

\subsection{Special values used in the proof} \label{consts} before we continue explaining the main result we would like to defined some constants.

\begin{eqnarray*}
a_0&=&\frac{43170475352787}{10000000000000}\quad da=\frac{17}{50000000}\quad sa=\frac{1197}{100000000}\\
b_0&=&\frac{1490359743}{1000000000} \quad sb=\frac{1}{50000}\\
t_0&=&\frac{13366894627923}{5000000000000}\quad dt=\frac{1}{2500000}\quad st=\frac{11}{2000000}
\end{eqnarray*}

\subsection{Main Theorem: Periodic solution}

Our main  theorem shows that for some values of $a$, $b$ and $T$ of the form $a=4.3170\dots$, $b=1.4903\dots$ and $T=96.24\dots$, the solution of the three body problem is periodic.    For this periodic solution the orbit of the body with mass $200$ goes up and down on the $z$-axis, and the other two bodies moves around the $z$-axis; the orbit of one of them is shown in Figure \ref{fig1}.

\begin{thm}\label{mt}
There exist a  triple $(\bar{a},\bar{b},\bar{t})$ with $|\bar{a}-a_0|<sa+da$, $|\bar{b}-b_0|<sb$ and $|\bar{t}-t_0|<6 (ds+dt)$, such that  the solution of the three body problem given by 

\begin{eqnarray*}
x(t)&=&(0,0,F(t,\bar{a},\bar{b}))\\
y(t)&=&(R(t,\bar{a},\bar{b}) \cos(\Theta(t,\bar{a},\bar{b})),R(t,\bar{a},\bar{b}) \sin(\Theta(t,\bar{a},\bar{b})),- F(t,\bar{a},\bar{b}))\\
z(t)&=&(-R(t,\bar{a},\bar{b}) \cos(\Theta(t,\bar{a},\bar{b})),-R(t,\bar{a},\bar{b})\sin(\Theta(t,\bar{a},\bar{b})),-F(t,\bar{a},\bar{b}))\, ,
\end{eqnarray*}

is period with period $36 \bar{t}$. Moreover, we have that $\Theta(\bar{a},\bar{b},\bar{t})=\frac{7 \pi}{18}$.

\end{thm}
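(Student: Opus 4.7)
The strategy is to convert periodicity of period $36\bar t$ into three scalar equations at the single intermediate time $\bar t$ via the symmetries of~(\ref{e1}), and then to locate a simultaneous zero of those three equations by combining the Round Taylor Method (Theorem~\ref{error}), the Poincare--Miranda theorem (Theorem~\ref{PMT}), and the implicit-function estimate (Theorem~\ref{iftt}).

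\emph{Step 1: symmetry reduction.} I would first show that whenever a triple $(\bar a, \bar b, \bar t)$ satisfies
\[
\dot F(\bar t, \bar a, \bar b) = 0, \qquad \dot R(\bar t, \bar a, \bar b) = 0, \qquad \Theta(\bar t, \bar a, \bar b) = \frac{7\pi}{18},
\]
the three-body solution built from $(\bar a, \bar b)$ has period exactly $36\bar t$. Translating time so that $\bar t$ becomes the origin, Lemma~\ref{per2} gives that $F$ and $R$ are even about $\bar t$, so at $t = 2\bar t$ we have $F = 0$, $R = 10$, $\dot F = -\bar b$, $\dot R = 0$; applying Lemma~\ref{per} to this time-reversed state shows that on $[2\bar t, 4\bar t]$ the motion retraces in reverse and $(F, R, \dot F, \dot R)$ returns to its value at $t = 0$. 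Since $\dot\Theta = 10\bar a/R^2$ and $R$ is symmetric under each of these reflections, the angle accumulated on each of the four quarters of $[0, 4\bar t]$ equals $7\pi/18$, so $\Theta(4\bar t) = 14\pi/9$. Concatenating nine $4\bar t$-cycles then produces total time $36\bar t$ and total angular advance $9 \cdot 14\pi/9 = 14\pi \equiv 0 \pmod{2\pi}$, closing the orbit.

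\emph{Step 2: rigorous location of the zero.} Set $G_1(a,b,t) = \dot F(t,a,b)$, $G_2(a,b,t) = \dot R(t,a,b)$, and $G_3(a,b,t) = \Theta(t,a,b) - 7\pi/18$. All three functions, together with the partial derivatives of $G_1$ and $G_2$ with respect to $(a, b)$, are components of the enlarged vector $Y$ of Theorem~\ref{wst}. The Round Taylor Method applied to $\dot Y = P(Y)$ with a sufficiently small step $h$, rounding $H$, and order $m$ yields rigorous interval enclosures for every $G_i$ and each of its first partials throughout the box
\[
B = \{(a,b,t): |a - a_0| < sa + da,\ |b - b_0| < sb,\ |t - t_0| < 6(st + dt)\}.
\]
The plan is then threefold. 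First, apply Theorem~\ref{PMT} to $(G_1, G_2)$ on the 2D slice $t = t_0$: if the enclosures show $G_1$ changes sign on the two $a$-faces of the $(a, b)$-rectangle and $G_2$ changes sign on the two $b$-faces, a common zero $(a_*, b_*, t_0)$ is produced. Second, apply Theorem~\ref{iftt} at this seed point to $(G_1, G_2)$ on $B$, using the same enclosures to verify the nondegeneracy hypotheses $\delta_i > \epsilon_i$ on the partials; this identifies the zero set of $(G_1, G_2)$ in $B$ with a single smooth curve $\alpha$ that meets every $t = \mathrm{const}$ slice of $B$ in exactly one point. Third, use the enclosures to certify that $G_3 \circ \alpha$ takes opposite signs at the two $t$-endpoints of $B$, and conclude by the one-dimensional case of Theorem~\ref{PMT} that $G_3 \circ \alpha$ vanishes at some interior point, which is the required $(\bar a, \bar b, \bar t)$.

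\emph{The main obstacle.} The principal difficulty is quantitative rather than conceptual: the parameters $h$, $H$, $m$ in Theorem~\ref{error} must be chosen so that the cumulative error $\tilde H$ over the interval $[0, t_0 + 6(st + dt)]$ applied to the twelve-dimensional system of Theorem~\ref{wst} is simultaneously smaller than (a) the sign-change margins for $G_1$ and $G_2$ on the boundary of the 2D slice $t = t_0$, (b) the gap $\delta_i - \epsilon_i$ between diagonal lower bounds and off-diagonal upper bounds on the partials required by Theorem~\ref{iftt}, and (c) the sign-change margin for $G_3$ between the two $t$-endpoints of $\alpha$. The constants of Subsection~\ref{consts} appear to be calibrated precisely so that all three margins close at once, so once the calibration is checked the remaining work reduces to a large but finite exact rational-arithmetic computation verifying the three sign patterns.
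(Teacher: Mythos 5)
Your Step 1 is essentially the paper's Lemma \ref{symlem} plus the rational-angle remark, and it is correct; the angular accounting $4\cdot\frac{7\pi}{18}=\frac{14\pi}{9}$ per cycle and nine cycles giving $14\pi\equiv 0\pmod{2\pi}$, hence period $36\bar t$, is exactly the paper's reduction.

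The gap is in Step 2, in the choice of slicing variable. You propose to find a seed in the $t=t_0$ slice, invoke Theorem \ref{iftt} to get a curve meeting each $t=\mathrm{const}$ slice of the box in one point, and then watch $G_3$ change sign as $t$ sweeps. That requires verifying the hypotheses of Theorem \ref{iftt} with $t$ in the role of the free variable $x_3$ and $(a,b)$ as $(x_1,x_2)$. But Theorem \ref{iftt} is not merely a nondegeneracy statement: it requires a near-diagonal Jacobian, i.e., $\partial f_1/\partial x_1$ large and $\partial f_1/\partial x_2$ small, and the same for $f_2$ with the roles of $x_1,x_2$ exchanged. With $f_1=\dot F$, $f_2=\dot R$, the paper's own enclosures (Lemmas \ref{bds}, \ref{bdsb}, Corollary \ref{ddot}) show that on the relevant box the approximate magnitudes are $|\ddot F|\approx 0.81$, $\dot F_a\approx 0.47$, $\dot F_b\approx 0.51$, $\ddot R\approx 0.37$, $\dot R_a\approx 1.65$, $\dot R_b\approx 0.88$. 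The entry $\dot R_a$ is the single largest partial, so $a$ must occupy a ``main-diagonal'' slot paired with $\dot R$, and $\ddot F\approx 0.81$ is far from small compared with either $\dot F_a$ or $\dot F_b$, so $t$ cannot be the free variable. With your assignment the required inequalities $\delta_1<|\partial f_1/\partial x_1|$ and $|\partial f_1/\partial x_3|<\epsilon_1<\delta_1$ cannot simultaneously hold (you would be forced to take $\epsilon_1\ge 0.81 > \delta_1$). This is precisely why the paper slices by $b$ instead, with $t$ and $a$ as the dominant variables: it applies PMT in the $(t,a)$-rectangle at $b=b_0$ (Lemma \ref{pmlforb0psb}, with $\dot F$ changing sign in $t$ and $\dot R$ changing sign in $a$), checks the $\epsilon$-diagonal-dominance of $(\ddot F,\dot R_a)$ against $(\dot F_a,\dot F_b,\ddot R,\dot R_b)$, and then monitors $\Theta$ at the two endpoints $b=b_0\pm sb$ of the curve (Lemmas \ref{l2}, \ref{l3}). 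You would need to restructure Step 2 along these lines, or else carry out an orthogonal change of variables in $(a,b,t)$ that diagonalizes the Jacobian before invoking Theorem \ref{iftt} — neither of which your proposal supplies.
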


\subsection{Symmetry lemmas and reduced periodic solutions }

A solution of the three body problem is called {\it reduced} periodic if the functions that provide the distances between the bodies are periodic with the same period $T_0$. In the subfamily of solutions that we are describing in this paper,  reduced periodic means that for some $T_0>0$, $R(t+kT_0,a,b)=R(t,a,b)$ and $f(t+kT_0,a,b)=f(t,a,b)$ for any integer $k$. The following theorem makes easier the task of finding reduced periodic solutions.

\begin{lem}\label{symlem}
If for some $t_1>0$ we have that $\dot{R}(t_1,a,b)=0=\dot{F}(t_1,a,b)$, then the functions $f(t)=F(t,a,b)$ and $r(t)=R(t,a,b)$ are periodic with period $4t_1$.
\end{lem}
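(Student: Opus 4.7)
My plan is to derive the claimed periodicity from two reflection symmetries of the trajectory in the $(f,r)$-plane: one about $t=0$ that already follows from Lemma \ref{per}, and a second about $t=t_1$ that I will obtain from Lemma \ref{per2}. Composing two reflections is a translation, and that translation will realize the period.

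First I would check that the right-hand sides of \eqref{e1}, abbreviated as $\phi(f,r) = -400 f/(r^2+4f^2)^{3/2}$ and $\xi(f,r) = 100 a^2/r^3 - 25/r^2 - 200 r/(r^2+4f^2)^{3/2}$, satisfy $\phi(-f,r) = -\phi(f,r)$ and $\xi(-f,r) = \xi(f,r)$. This is immediate because $s^2 = r^2 + 4f^2$ depends on $f$ only through $f^2$. With the initial conditions $f(0)=0$, $\dot f(0)=b$, $r(0)=10$, $\dot r(0)=0$ from Definition \ref{tf}, Lemma \ref{per} then gives that $F(\cdot,a,b)$ is odd and $R(\cdot,a,b)$ is even in $t$. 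Next I would shift time by setting $\tilde f(\tau) = F(t_1+\tau,a,b)$ and $\tilde r(\tau) = R(t_1+\tau,a,b)$; these solve the same autonomous system, and by the hypothesis $\dot F(t_1,a,b) = 0 = \dot R(t_1,a,b)$ they have vanishing first derivatives at $\tau = 0$. Hence Lemma \ref{per2} applies and yields that $\tilde f$ and $\tilde r$ are even in $\tau$, which translates into the identities $F(t,a,b) = F(2t_1 - t,a,b)$ and $R(t,a,b) = R(2t_1-t,a,b)$.

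Finally I would combine the two reflections. Using oddness of $F$ and the new reflection about $t_1$,
\[ F(2t_1 + t,a,b) = F\bigl(2t_1 - (-t),a,b\bigr) = F(-t,a,b) = -F(t,a,b), \]
and iterating once more gives $F(4t_1+t,a,b) = -F(2t_1+t,a,b) = F(t,a,b)$, so $F$ has period $4t_1$. The analogous chain for $R$, using evenness in place of oddness, yields $R(2t_1+t,a,b) = R(-t,a,b) = R(t,a,b)$, so $R$ already has period $2t_1$ and hence period $4t_1$. I do not expect a substantive obstacle: Lemmas \ref{per} and \ref{per2} already do all of the analytic work, and the only verification needed beyond bookkeeping is the elementary parity check on $\phi$ and $\xi$. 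One mild technical point is that the argument tacitly needs the solution to remain defined (in particular $R$ bounded away from $0$) on $[-2t_1, 2t_1]$; in the setting of Theorem \ref{mt} this is guaranteed by the rigorous numerics of the Round Taylor Method, so I would simply flag it and proceed.
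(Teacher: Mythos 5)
Your proposal is correct and follows essentially the same route as the paper: both proofs obtain the reflection about $t=0$ from Lemma \ref{per} (oddness of $F$, evenness of $R$) and the reflection about $t=t_1$ from Lemma \ref{per2}, and then compose them. The paper phrases the composition by tracking the initial data $(f,\dot f,r,\dot r)$ through $2t_1$ to $4t_1$ and invoking uniqueness, whereas you compose the two reflections directly into the functional identities $F(4t_1+t)=F(t)$ and $R(2t_1+t)=R(t)$; your version is a slightly cleaner bookkeeping of the same idea (and the extra observation that $R$ in fact has period $2t_1$ is a nice bonus), but there is no substantive difference in method.
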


\begin{proof}
By Lemma (\ref{per2}) we have that  $F(t_1-t,a,b)=F(t_1+t,a,b)$ and $R(t_1-t,a,b)=R(t_1+ta,b)$. It follows that  $-\dot{F}(t_1-t,a,b)=\dot{F}(t_1+t,a,b)$ and $-\dot{R}(t_1-t,a,b)=\dot{R}(t_1+t,a,b)$, therefore $\dot{R}(2t_1,a,b)=\dot{R}(0,a,b)=0$ and $F(2t_1,a,b)=F(0,a,b)=0$. By Lemma \ref{per} we have that $f(-t)=-f(t)$ and $r(-t)=r(t)$.  Using Lemma \ref{per} one more time we obtain that $f(t+ 2 t_1)=-f(2t_1-t)$ and $r(2t_1+t)=r(2t_1-t)$. It follows that
$f(4 t_1)=f(0)=0$, $\dot{f}(4 t_1)=\dot{f}(0)=b$, $r(4 t_1)=r(0)=10$ and $\dot{r}(4t_1)=-\dot{r}(0)=0$. Since the values of the solutions $f$ and $r$ at $t=4t_1$ agrees with those at $t=0$, then the lemma follows.
\end{proof}

By the previous theorem, a point $(t_1,a_1,b_1)$ satisfying $\dot{R}(t_1,a_1,b_1)=0=\dot{F}(t_1,a_1,b_1)$ defines a reduced periodic solution. If additionally $\Theta(t_1,a_1,b_1)=\frac{2 \pi p}{q}$ with $p$ and $q>0$ integers, then the solution is not only reduced periodic but periodic with period $4 q t_1$. Moreover, by the implicit function theorem if the cross product between the gradients of the functions $\dot{R}$ and $\dot{F}$ at $(t_1,a_1,b_1)$ does not vanish, then there is a curve of points that solve the equation $\dot{R}(t,a,b)=0=\dot{F}(t,a,b)$ and then, we obtain a family of reduced periodic solutions. In the case that the function $\Theta$ is not constant along this curve of points in the space that represent reduced periodic solutions, then we obtain infinitely many periodic solution due to the fact that on any open interval there are infinitely many numbers of the form $\frac{2 \pi p}{q}$ with $p$ and $q$ integers. The Implicit function theorem also tell us that there exists an small open set around $(t_1,a_1,b_1)$ such that all the solutions of the equations $\dot{R}(t,a,b)=0=\dot{F}(t,a,b)$ in this small open set must be  part of this curve... but how small is small?  Notice that the following two difficulties need to be taken care of: (i) The fact that $\Theta(t_1,a_1,b_1)$ is near $\frac{7 \pi}{18}$ and $\Theta$ is not constant along the curve, does not imply that $\Theta$ eventually reaches the value $\frac{7 \pi}{18}$ on this  curve. (ii) The fact that  $\Theta(t_1,a_1,b_1)<\frac{7 \pi}{18}$ and $\Theta(t_2,a_2,b_2)>\frac{7 \pi}{18}$ for two nearby points $(t_1,a_1,b_1)$ and $(t_2,a_2,b_2)$ that satisfy the equations $\dot{R}(t,a,b)=0=\dot{F}(t,a,b)$, does not guarantee that $\Theta$ eventually reaches the value $\frac{7 \pi}{18}$ due to the fact the point $(t_1,a_1,b_1)$ may no be in the same connected component of the curve of solutions that contains  $(t_2,a_2,b_2)$. Theorem \ref{iftt} helps to solve these two difficulties.

\begin{figure}[hbtp]\label{fig3}
\begin{center}\includegraphics[width=.6\textwidth]{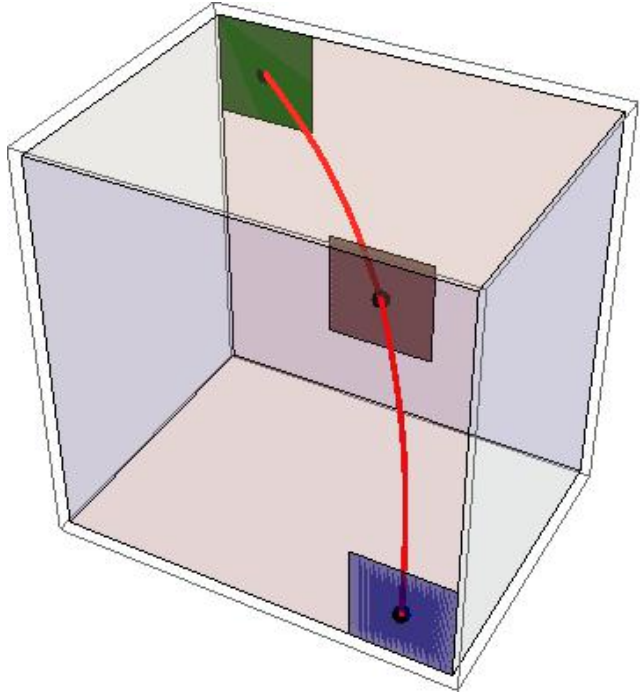}
\end{center}
\caption{Points $(t,a,b)$ in the curve represent reduced periodic solutions of the three body problem. We proved the existence of a point in this curve in each one of the three small rectangles shown in the picture and we prove that at the point in the curve contained in  the first rectangle the value of the function $\Theta$ is smaller than $\frac{7\pi}{18}$ and at the point in the curve contained in  the last rectangle the value of  the function $\Theta$ is bigger than $\frac{7\pi}{18}$. The proof of the existence of a point in the curve  on the rectangle in the center was needed to implement our theorem regarding  the Implicit Function Theorem, which, in this case, it guarantees  the existence of  a unique point on the curve in every plane $b=c$.}\label{fig3}
\end{figure}

The idea of the proof of  Theorem \ref{mt} is the following. We consider a box in the space, see Figure \ref{fig3}. We use Theorem \ref{iftt} to show that every plane $b=c$ in this box contains only one solution of the equation $\dot{R}(t,a,b)=0=\dot{F}(t,a,b)$. We use the Poincare Miranda Theorem to show that there are reduced periodic solutions on three rectangles contained in planes of the form $b=b_0$, $b=b_0-sb$ and $b=b_0+sb$. We finish the prove by showing that $\Theta$ 
evaluated on the solution of the equations $\dot{R}(t,a,b)=0=\dot{F}(t,a,b)$ contained  on the rectangle with $b=b_0-sb$ is less than $\frac{7 \pi}{18}$
and $\Theta$ evaluated on the solution of the equations $\dot{R}(t,a,b)=0=\dot{F}(t,a,b)$ contained  on the rectangle with $b=b_0+sb$ is greater than $\frac{7 \pi}{18}$.

Now we are ready to state the four lemmas mentioned in the beginning of this section that lead to the proof of the main Theorem \ref{mt}. The first lemma shows that there is a point in the curve on the box in Figure \ref{fig3} in the rectangle in the center. 

\begin{lem} \label{l1} There is a solution of the equations $\dot{R}(t,a,b)=0=\dot{F}(t,a,b)$ with $b=b_0$, $|a-a_0|<da$ and $|t-t_0|<dt$
\end{lem}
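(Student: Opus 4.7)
The plan is to apply the Poincar\'e--Miranda Theorem (Theorem \ref{PMT}) on the closed rectangle
\[
\mathcal{R} = [a_0-da,\, a_0+da] \times [t_0-dt,\, t_0+dt],
\]
with the third variable frozen at $b = b_0$. I would designate one of the two functions $\dot F(\cdot,\cdot,b_0)$ and $\dot R(\cdot,\cdot,b_0)$ as the function ``$F$'' of Theorem \ref{PMT} (the one that must change sign between the two horizontal edges $t = t_0 \pm dt$) and the other as ``$G$'' (the one that must change sign between the two vertical edges $a = a_0 \pm da$); the numerical behaviour near $(t_0,a_0,b_0)$ dictates which assignment is the correct one.

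To obtain the required sign information rigorously, I would run the Round Taylor Method (Theorem \ref{error}) not on the four-dimensional system for $(F,\dot F,R,\dot R)$ but on the full twelve-dimensional augmented system $\dot Y = P(Y)$ of Theorem \ref{wst}. A single integration from $t=0$ up to each prescribed target time, with the parameter $a$ replaced by the rational endpoint $a_0 \pm da$, produces at that final time simultaneous rigorous enclosures of $\dot F$, $\dot R$, and their partial derivatives $\dot F_a = \partial \dot F/\partial a$ and $\dot R_a = \partial \dot R/\partial a$. This is the key economy of the approach: one ODE run bounds both the relevant value at a sample point and the slope needed to propagate that value across an edge.

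With these enclosures in hand, each of the four Poincar\'e--Miranda edge conditions reduces to a one-dimensional verification. For the horizontal edge $t = t_0+dt$, $a \in [a_0-da, a_0+da]$, for example, I would produce an enclosure of $\dot F(t_0+dt, a_0, b_0)$ together with a uniform upper bound $B$ for $|\dot F_a(t_0+dt, a, b_0)|$ on the edge (the latter obtained from Round Taylor enclosures at a few sample values of $a$ combined with Theorem \ref{error2}), and conclude via the mean value theorem that $\dot F$ has a definite sign across the whole edge provided the enclosure at the center exceeds $B\cdot da$ in absolute value. Vertical edges are handled analogously, with the $t$-derivative replaced by $\ddot F$ or $\ddot R$, which one reads off directly from the right-hand side $P$ of the ODE. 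If a single sample point does not suffice, I would subdivide the edge into finitely many subintervals and repeat.

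The main obstacle will be the bookkeeping of constants required by Theorem \ref{error}: since $da = 17/(5\cdot 10^7)$ and $dt = 1/(2.5\cdot 10^6)$ are tiny, the step size $h$ and the rounding parameter $H$ must be chosen small enough that the cumulative error $\tilde H$ lies an order of magnitude below both the margins $B \cdot da$ and the corner values of $\dot F$ and $\dot R$. One must also exhibit explicit enclosures $M_i$, $K_i$ for the right-hand side $P$ and its derivatives over a fixed box $U_2$ containing the numerically observed trajectory for all $(a,b)$ under consideration, and verify that the Round Taylor iterates $z_j$ remain in the smaller box $U_1$ throughout the run; both tasks are routine but laborious once a rough a priori enclosure of the solution is available. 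Once the four edge inequalities hold with positive margin, Theorem \ref{PMT} furnishes the point $(\bar a,\bar t) \in \mathcal{R}$ with $\dot R(\bar t,\bar a,b_0) = 0 = \dot F(\bar t,\bar a,b_0)$, as claimed.
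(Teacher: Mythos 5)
Your proposal is correct and mirrors the paper's own argument: the paper likewise combines the Poincar\'e--Miranda theorem with Round Taylor integrations to fix the sign of $\dot F$ on the $t=t_0\pm dt$ edges and of $\dot R$ on the $a=a_0\pm da$ edges, using rigorously bounded partial derivatives ($\dot F_a$, $\dot R_a$ from the augmented system, $\ddot F$, $\ddot R$ read off the right-hand side) to propagate a corner sign across each edge. The only difference is bookkeeping: the paper integrates the smaller five-dimensional vector field $W$ once per corner and gets its uniform $\dot F_a$, $\dot R_a$ bounds from a single run of the nine-dimensional field $G$ (Lemma \ref{bds}, Corollaries \ref{dFapos}, \ref{ddot}), then uses the fixed sign of $\dot F_a$ and $\ddot R$ for monotonicity along each edge, rather than the center-value-versus-$B\cdot da$ comparison you describe; both variants close.
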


The second and third lemmas show that there is a point in the curve on the initial and final rectangles  \ref{fig3}. 

\begin{lem} \label{l2} For some $(t,a,b)$ with $b=b_0-sb$, $|a-(a_0+sa)|<da$ and $|t-(t_0-st)|<dt$, there is a solution of the equations  $\dot{R}(t,a,b)=0=\dot{F}(t,a,b)$ such that $\Theta(t,a,b)<\frac{7 \pi}{18}$. 
\end{lem}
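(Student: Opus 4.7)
The plan is to apply the Poincar\'e-Miranda theorem (Theorem~\ref{PMT}) on the closed rectangle
$$\Omega = [a_0+sa-da,\, a_0+sa+da]\times[t_0-st-dt,\, t_0-st+dt]$$
in the variables $(a,t)$, with $b$ held fixed at $b_0-sb$, to produce a point at which both $\dot F$ and $\dot R$ vanish. Since the sign conditions of Theorem~\ref{PMT} will be verified strictly on the boundary, the resulting zero will lie in the open interior of $\Omega$, so the strict inequalities on $|a-(a_0+sa)|$ and $|t-(t_0-st)|$ hold automatically. The additional condition $\Theta<7\pi/18$ will then be checked uniformly on all of $\Omega$.

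First I would apply the Round Taylor Method (Section~\ref{nm}) to the ODE system of Theorem~\ref{wst} at $(a,b)=(a_0+sa,b_0-sb)$, augmented by two scalar equations for $\Theta$ and $\Theta_a$ obtained by differentiating $r^2\dot\theta=10a$ in $t$ and in $a$, integrating from $t=0$ to $t=t_0-st\pm dt$. Theorem~\ref{error} then provides rigorous enclosures of width $\tilde H$ for each of $\dot F, \dot R, \Theta, \dot F_a, \dot R_a, \Theta_a$ at the two endpoint times. I would propagate from the central value $a=a_0+sa$ to every $a\in[a_0+sa-da, a_0+sa+da]$ using the mean value theorem
$$\dot F(t,a,b_0-sb)=\dot F(t,a_0+sa,b_0-sb)+(a-a_0-sa)\,\dot F_a(t,\xi,b_0-sb),$$
and analogously for $\dot R$, where the uniform bounds on $|\dot F_a|$ and $|\dot R_a|$ over the tube $U_2$ from Theorem~\ref{error} come from the same numerical integration. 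If the computed values of $\dot F$ at the two $t$-edges and of $\dot R$ at the two $a$-edges emerge with strictly opposite signs and magnitudes exceeding the combined enclosure widths, the hypotheses of Theorem~\ref{PMT} are verified and we obtain $(\bar a,\bar t)$ in the interior of $\Omega$ with $\dot F(\bar t,\bar a,b_0-sb)=\dot R(\bar t,\bar a,b_0-sb)=0$.

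For the strict inequality on $\Theta$, I would combine the rigorous enclosure of $\Theta(t_0-st,a_0+sa,b_0-sb)$ obtained above with the Lipschitz bound
$$|\Theta(t,a,b_0-sb)-\Theta(t_0-st,a_0+sa,b_0-sb)|\le dt\cdot\sup_\Omega|\dot\Theta|+da\cdot\sup_\Omega|\Theta_a|,$$
where $\dot\Theta=10a/R^2$ has an obvious uniform bound and $\sup|\Theta_a|$ is estimated from the augmented system on $U_2$, and then verify that the resulting upper bound is strictly below $7\pi/18$. The main obstacle will be controlling the total error: the enclosure $\tilde H$ of Theorem~\ref{error} grows like $(e^{Lkh}-1)/L$ over the integration length $kh\approx t_0-st\approx 2.67$, so the tube $U_1$ must be chosen tightly around the actual trajectory and the constants $K_i$, $M_i$ on the slightly fatter $U_2$ must be sharp enough that $\tilde H$ stays strictly smaller than both the numerical values of $|\dot F|$ and $|\dot R|$ at the corners of $\Omega$ and the gap between the computed value of $\Theta$ and $7\pi/18$. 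This is a computational but delicate bookkeeping task, driving the choice of the step $h$, the rounding $H$, and the order $m$ of the Taylor method.
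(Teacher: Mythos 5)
Your overall strategy (Poincar\'e--Miranda on the rectangle $\Omega$ at $b=b_0-sb$, with sign conditions verified from Round Taylor enclosures plus Lipschitz propagation, and the $\Theta$ claim handled by a uniform Lipschitz estimate anchored at the center) is the same as the paper's. There is, however, a gap in how you propose to verify the $\dot R$ sign conditions. The Poincar\'e--Miranda hypotheses require $\dot R<0$ along the entire edge $a=a_0+sa-da$ and $\dot R>0$ along the entire edge $a=a_0+sa+da$; on each such edge $a$ is fixed and it is $t$ that ranges over $[t_0-st-dt,\,t_0-st+dt]$. Your mean-value step,
$$\dot R(t,a,b_0-sb)=\dot R(t,a_0+sa,b_0-sb)+(a-a_0-sa)\,\dot R_a(t,\xi,b_0-sb),$$
propagates in $a$, so it gets you from the center to the four corners but says nothing about the sign of $\dot R$ at intermediate times on those edges. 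A single Round Taylor evaluation gives $\dot R$ only at the chosen end time, and checking the discrete iterates $z_i$ along the integration does not control the trajectory between steps either.

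The missing ingredient is a uniform bound in the $t$-direction. The paper establishes $\ddot R>0$ uniformly on the whole box (Corollary~\ref{ddot}, via bounds on $F$ and $R$ from Lemma~\ref{bdsw}), which makes $\dot R$ strictly increasing in $t$. That lets one evaluate $\dot R$ only at the corner $t=t_0-st+dt$ on the edge $a=a_0+sa-da$ (negativity there forces negativity for all smaller $t$), and at the corner $t=t_0-st-dt$ on the edge $a=a_0+sa+da$. You would need the analogous monotonicity-in-$t$ (or at least an explicit bound on $\ddot R$) to close your argument; with that added, your proposal matches the paper's proof. One small efficiency remark: the paper uses one-sided bounds $\dot F_a>0$ and $\ddot R>0$ (Corollaries~\ref{dFapos} and~\ref{ddot}) to reduce the check to a single worst-case corner on each edge, rather than a two-sided $|\dot F_a|$ or $|\ddot R|$ Lipschitz estimate anchored at the center as you suggest; this doubles the effective margin and is helpful given how tight the numerics are.
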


\begin{lem} \label{l3} For some $(t,a,b)$ with $b=b_0+sb$, $|a-(a_0-sa)|<da$ and $|t-(t_0+st)|<dt$, there is a solution of the equations  $\dot{R}(t,a,b)=0=\dot{F}(t,a,b)$ such that $\Theta(t,a,b)>\frac{7 \pi}{18}$. 
\end{lem}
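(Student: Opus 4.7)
The plan is to mirror the argument used for Lemma \ref{l2}, with the roles of $b_0-sb$ and $b_0+sb$ and the corresponding inequality for $\Theta$ reversed. Fix $b=b_0+sb$ and consider the rectangle
\[
R = [a_0 - sa - da,\, a_0 - sa + da]\times[t_0 + st - dt,\, t_0 + st + dt]
\]
in the $(a,t)$-plane. Apply the Poincar\'e-Miranda Theorem \ref{PMT} with $F(a,t)=\dot F(t,a,b_0+sb)$ and $G(a,t)=\dot R(t,a,b_0+sb)$ (or the opposite assignment, as dictated by the certified signs on the four sides of $R$). A zero $(\bar a,\bar t)$ of the pair in the interior of $R$ is exactly a point of the form required by the lemma.

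To certify the four sign conditions on $\partial R$, I would integrate the augmented system of Theorem \ref{wst} at the reference parameters $(a,b)=(a_0-sa,\,b_0+sb)$ with the Round Taylor Method and use Theorem \ref{error} to obtain rigorous rational enclosures of $\dot F$, $\dot R$ together with their $a$-derivatives $\dot F_a$, $\dot R_a$ at sample times covering the interval $[t_0+st-dt,\,t_0+st+dt]$. A first-order Taylor expansion in $a$, with its second-order remainder controlled by explicit uniform bounds on the second $a$-derivatives over a box containing $R$, then propagates these enclosures to every $(t,a)\in R$. Choosing $h$, $H$, and the Taylor order $m$ so that the accumulated error $\tilde H$ is substantially smaller than the magnitudes of $\dot F$ on the vertical sides and of $\dot R$ on the horizontal sides forces the desired signs.

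For the angular condition $\Theta(\bar t,\bar a,b_0+sb)>\tfrac{7\pi}{18}$, I would further extend the ODE by the scalar equation $\dot\Theta = 10a/R^{2}$, together with its $a$-derivative, and re-apply the Round Taylor Method. This yields a certified strict lower bound for $\Theta$ uniformly over $R$ (again via a first-order expansion in $a$ with an explicit remainder term), and hence in particular at the Poincar\'e-Miranda point $(\bar t,\bar a)$.

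The main obstacle is quantitative. To invoke Theorem \ref{error} I must exhibit uniform bounds $M_i$, $K_i$ for the extended vector field and all required Jacobians on an open box $U_2$ that traps the full trajectory over $t\in[0,\,t_0+st+dt]$ for every $a$ in the allowed range. The accumulated error of the Round Taylor Method grows geometrically as $e^{Lkh}$ with $k\approx (t_0+st+dt)/h$ iterations, so balancing the order-$m$ truncation term $Mh^{m+1}/(m+1)!$ against the rounding term $H/h$ so that $\tilde H$ falls below the available margin for the sign of $\dot F$, $\dot R$, and $\Theta - \tfrac{7\pi}{18}$ is the computational heart of the proof.
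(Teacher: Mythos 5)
Your overall strategy matches the paper's: Poincar\'e--Miranda on the rectangle in the $b=b_0+sb$ slice, with the Round Taylor Method and Theorem~\ref{error} supplying the sign certificates on $\partial R$, and a separate enclosure of $\Theta$. But the bookkeeping you propose for extending a pointwise numerical value to an entire edge of $R$ is much heavier than what the paper actually does, and one part of it is underspecified. The paper never computes or bounds any second $a$-derivatives and never ``samples'' along the $t$-interval. Instead it uses the precomputed, global bounds of Lemma~\ref{bds}, Lemma~\ref{bdsb}, and Corollaries~\ref{dFapos} and~\ref{ddot} (all valid on a box strictly larger than $R$) in the cheapest possible way: since $\dot F_a>0$ uniformly, $\dot F$ is monotone in $a$, so the sign of $\dot F(t_0+st\pm dt,a,b_0+sb)$ over the whole $a$-range follows from a single corner evaluation; since $\ddot R>0$ uniformly, $\dot R$ is monotone in $t$, so the sign of $\dot R(t,a_0-sa\pm da,b_0+sb)$ over the whole $t$-range again follows from a single endpoint. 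This is where your plan diverges: (i) your ``first-order Taylor expansion in $a$ with second-order remainder'' requires rigorous uniform bounds on second $a$-derivatives, which the paper never constructs; and (ii) your phrase ``sample times covering the interval $[t_0+st-dt,\,t_0+st+dt]$'' is not a complete argument, because Theorem~\ref{error} only controls the solution at the Round Taylor grid points, and the window $2\,dt$ is much smaller than the step $h$, so you would still need a derivative argument (precisely the $\ddot R$--sign argument the paper uses) to bridge between certified points. Finally, for the angle condition the paper does not do a Taylor expansion in $a$ at all: it writes $\Theta(t,a,b_0+sb)-\tfrac{7\pi}{18}$ telescopically and applies the mean value theorem twice with the uniform bounds $|\Theta_a|$ and $|\dot\Theta|$ from Corollaries~\ref{dFapos} and~\ref{ddot}, again needing only first-derivative information. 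So the route you sketch is conceptually sound at the level of Poincar\'e--Miranda plus rigorous integration, but as written it both demands bounds you haven't produced and leaves the time-interval coverage unjustified; the paper's monotonicity shortcuts are what close those gaps.
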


The fourth  lemma guaranties the set for values of $(t,a,b)$ given by 

$$[t_0-6(st+dt),t_0+6(st+dt)]\times [a_0-3(sa+da),a_0+3(sa+da)]\times [b_0-sb,b_0+sb] $$

is small enough to only allow one connected curve on it as the solution for the equation $\dot{R}(t,a,b)=0=\dot{F}(t,a,b)$.
 
\begin{lem} \label{l4} For every $b\in [b_0-sb,b_0+sb]$ there exists a unique solution $(t,a,b)$ of the equations $\dot{R}(t,a,b)=0=\dot{F}(t,a,b)$ with 
$|a-a_0|<3 (sa+da) $ and $|t-t_0|<6(st+dt)$.
\end{lem}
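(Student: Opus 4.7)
The plan is to apply Theorem~\ref{iftt}, via the orthogonal-coordinates corollary alluded to immediately after it, to the pair $f_1 = \dot F$ and $f_2 = \dot R$ on the box
\begin{equation*}
B = [t_0-6(st+dt), t_0+6(st+dt)] \times [a_0-3(sa+da), a_0+3(sa+da)] \times [b_0-sb, b_0+sb].
\end{equation*}
Lemma~\ref{l1} furnishes a point $(t^\ast, a^\ast, b_0) \in B$ where both $\dot F$ and $\dot R$ vanish, with $|t^\ast - t_0| < dt$ and $|a^\ast - a_0| < da$. After translating so that this point becomes the origin and letting $\tilde b = b - b_0$ play the role of $x_3$ in Theorem~\ref{iftt}, I match constants: $\rho = sb$ for the $b$-range, $\mu_1 \approx da$ and $\mu_2 \approx dt$ for the uncertainty in the location of the reference point, $\mu_3 = 0$, and $\tilde\epsilon_1$, $\tilde\epsilon_2$ chosen slightly larger than $3(sa+da)$ and $6(st+dt)$ respectively so that the resulting box $U$ contains $B$.

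For the dominance hypothesis of Theorem~\ref{iftt} to hold, $f_1$ must have one strictly dominant partial derivative and $f_2$ must have another in a complementary coordinate. Since $\dot F$ and $\dot R$ both depend strongly on $t$ near the reference point, the raw partials in $(t,a,b)$ will not split this way, and this is exactly the situation handled by the orthogonal-coordinates corollary. Concretely I would rotate in the $(\tilde t, \tilde a)$-plane so that one new axis is aligned with $\nabla_{(t,a)}\dot F$ at the reference point, which forces $\dot F$'s $(t,a)$-gradient to point entirely along that axis; one then checks from the certified $2{\times}2$ Jacobian enclosure that $\dot R$ still has a nonvanishing derivative along the orthogonal axis. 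Certified enclosures of the six partials $\partial_t\dot F, \partial_a\dot F, \partial_b\dot F, \partial_t\dot R, \partial_a\dot R, \partial_b\dot R$ throughout $B$ are produced by applying Theorem~\ref{error} to the 12-dimensional augmented system $\dot Y = P(Y)$ of Theorem~\ref{wst}, whose state already carries $F_a, R_a, F_b, R_b$ and their time derivatives; since $\dot F = P^2(Y)$ and $\dot R = P^4(Y)$ are explicit rational-algebraic functions of $Y$, so are their partials, hence they are enclosable from enclosures of $Y$.

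The main obstacle will be numerical tightness. After the rotation one must verify the nondegeneracy $\delta_1\delta_2 - \epsilon_1\epsilon_2 > 0$ and, crucially, the box-fitting inequalities $sb < \rho_i = (\tilde\epsilon_i - \mu_i)/m_i - \mu_3$ for $i=1,2$, which force the slopes $m_1, m_2$ of the implicit curve to be small enough that $\tilde b$ can sweep the full interval $[-sb,sb]$ without pushing the unique $(\tilde t, \tilde a)$ branch out of the rotated window covering $B$. Since $B$ is relatively wide in both $t$ and $a$, this demands that the Round Taylor enclosures deliver off-diagonal partials ($\epsilon_1, \epsilon_2$) well below the dominant diagonal ones ($\delta_1, \delta_2$), so the step $h$ and rounding $H$ in Theorem~\ref{error} must be chosen fine enough to achieve this while keeping the computation feasible. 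Once these quantitative inequalities are certified, Theorem~\ref{iftt} delivers, for each $\tilde b \in (-sb, sb)$, a unique $(\tilde t, \tilde a)$ in the rotated box solving $\dot F = \dot R = 0$, which is exactly the uniqueness statement of Lemma~\ref{l4}.
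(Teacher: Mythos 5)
Your high-level plan — apply Theorem~\ref{iftt} with $f_1=\dot F$, $f_2=\dot R$, using an existence point from Lemma~\ref{l1} and certified partial-derivative enclosures from the Round Taylor method — is exactly the paper's strategy. But you insert an unnecessary and unmotivated complication: you claim that ``since $\dot F$ and $\dot R$ both depend strongly on $t$ near the reference point, the raw partials in $(t,a,b)$ will not split this way,'' and hence that one must rotate in the $(t,a)$-plane via the orthogonal-coordinates corollary. That premise is false. The paper's bounds (Lemmas~\ref{bds}, \ref{bdsb}, and Corollary~\ref{ddot}) show that on the relevant box, $|\ddot F|\approx 0.814$ while $|\dot F_a|\approx 0.47$ and $|\dot F_b|\approx 0.51$; and $|\dot R_a|\approx 1.645$ while $|\ddot R|\approx 0.37$ and $|\dot R_b|\approx 0.88$. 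So $t$ is already the dominant variable for $\dot F$ and $a$ is already dominant for $\dot R$, and Theorem~\ref{iftt} applies directly with $(x_1,x_2,x_3)=(t,a,b)$: $\delta_1<|\ddot F|<\tilde\delta_1$, $|\dot F_a|,|\dot F_b|<\epsilon_1$, $\delta_2<|\dot R_a|<\tilde\delta_2$, $|\ddot R|,|\dot R_b|<\epsilon_2$. No rotation is needed; indeed, after the rotation you would still have to rederive certified enclosures for the rotated partials, and you give no indication of how the constants $\delta_i,\epsilon_i,\tilde\delta_i$ would be verified in the new frame.

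A secondary deviation: you propose to extract the six partials from the single 12-dimensional augmented system $\dot Y = P(Y)$. The paper instead runs the Round Taylor method on three smaller vector fields ($W$, $G$, $U$, of dimensions 5, 9, and 8) to obtain the same quantities, which reduces the cost of producing the enclosures; but this is a difference of implementation, not of correctness. The substantive gap is the misjudged dominance structure and the resulting unnecessary rotation step, which you describe only vaguely and never verify numerically.
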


\section{Bounds }

In this section we will define 3 differential equations that will help us find bounds for the functions $F(t,a,b)$, $R(t,a,b)$, $\Theta(t,a,b)$ and their partial derivatives. We will be using the functions $\phi_i$ defined in section \ref{dic}.

 %The vector field W
 
\subsection{The vector field $W$} Let us consider the vector field $W=(x_2,\phi_1,x_4,\phi_2,\phi_3)^T$,
where the functions $\phi_i$'s are those defined on section \ref{dic}. For any fixed $a$ and $b$, the function

 $$t\to \left( F(t,a,b),\dot{F}(t,a,b),R(t,a,b),\dot{R}(t,a,b),\Theta(t,a,b)\right)^T$$
 
  satisfies the differential equation $Y(t)=W(Y(t))$ with $Y(0)=(0,b,10,0,0)^T $
  
  \begin{prop} Using the functions $\phi_i$ defined in section \ref{dic}, we have that the derivative matrix  $DW$ of the vector field $W$,  and the derivative matrix $DW_1$ of the vector field $W_1=DW \, W$ are given by,
 
  $$ DW= \left(
\begin{array}{ccccc}
 0 & 1 & 0 & 0 & 0 \\
 \phi_4  & 0 & \phi_5 & 0 & 0 \\
 0 & 0 & 0 & 1 & 0 \\
 2 \phi_5 & 0 & \phi_6 & 0 & 0 \\
 0 & 0 & \phi_7 & 0 & 0 \\
\end{array}
\right)\quad\hbox{and}\quad   DW_1= \left(
\begin{array}{ccccc}
 \phi_4 & 0 & \phi_5 & 0 & 0 \\
 \phi_{10}  & \phi_4 & \phi_{12} & \phi_5 & 0 \\
 2 \phi_5 & 0 & \phi_6 & 0 & 0 \\
 2 \phi_{12} & 2\phi_5 & \phi_{14} & \phi_6 & 0 \\
 0 & 0 & \phi_{15} & \phi_{7} & 0 \\
\end{array}
\right)$$
   
  \end{prop}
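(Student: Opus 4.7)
The plan is a direct chain-rule computation exploiting the very sparse dependence of $W$ on its variables. From section \ref{dic}, the components are $W^1 = x_2$, $W^2 = \phi_1(x_1,x_3)$, $W^3 = x_4$, $W^4 = \phi_2(x_1,x_3)$, $W^5 = \phi_3(x_3)$, with $a$ regarded as a fixed parameter. Setting $\phi_4 = \partial \phi_1/\partial x_1$, $\phi_5 = \partial \phi_1/\partial x_3$, $\phi_6 = \partial \phi_2/\partial x_3$ and $\phi_7 = \partial \phi_3/\partial x_3$ as in section \ref{dic}, the displayed form of $DW$ is read off immediately; the only nontrivial slot is the $(4,1)$ entry, where one must check the identity $\partial \phi_2/\partial x_1 = 2\phi_5$ by a one-line computation from the explicit formula for $\phi_2$ (the only $x_1$-dependent piece of $\phi_2$ is $-200 x_3/(4x_1^2+x_3^2)^{3/2}$, whose $x_1$-derivative is twice the $x_3$-derivative of $-400 x_1/(4x_1^2+x_3^2)^{3/2} = \phi_1$).

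Next, I would compute $W_1 = DW\,W$ by matrix--vector multiplication to obtain
\[
W_1 = \bigl(\phi_1,\; \phi_4 x_2 + \phi_5 x_4,\; \phi_2,\; 2\phi_5 x_2 + \phi_6 x_4,\; \phi_7 x_4\bigr)^T.
\]
Rows $1$, $3$ and $5$ of $DW_1$ then follow by differentiating the scalars $\phi_1$, $\phi_2$ and $\phi_7 x_4$: they are $\nabla\phi_1 = (\phi_4,0,\phi_5,0,0)$, $\nabla\phi_2 = (2\phi_5,0,\phi_6,0,0)$ and $(0,0,\phi_{15},\phi_7,0)$, with $\phi_{15} = (\partial\phi_7/\partial x_3) x_4$ the abbreviation fixed in section \ref{dic}. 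The second row $\nabla(\phi_4 x_2 + \phi_5 x_4)$ is $(\phi_{10},\phi_4,\phi_{12},\phi_5,0)$, where $\phi_{10}$ and $\phi_{12}$ are the section \ref{dic} names for the corresponding combinations of second partials of $\phi_1$ multiplied by $x_2$ and $x_4$.

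The fourth row is the only step requiring more than bookkeeping, and specifically only its first entry. The $x_2$ and $x_4$ partials of $2\phi_5 x_2 + \phi_6 x_4$ are $2\phi_5$ and $\phi_6$ by inspection, and the $x_3$ partial is by definition $\phi_{14}$; the subtle point is the $x_1$ partial, which equals $2(\partial\phi_5/\partial x_1)x_2 + (\partial\phi_6/\partial x_1)x_4$ and must be shown to equal $2\phi_{12}$. The main obstacle, such as it is, is to notice that this follows from Clairaut's theorem on mixed partials (applicable because $\phi_1,\phi_2$ are smooth wherever $4x_1^2+x_3^2>0$): we have $\partial\phi_5/\partial x_1 = \partial^2\phi_1/\partial x_1\,\partial x_3 = \partial\phi_4/\partial x_3$, and, using the identity $\partial\phi_2/\partial x_1 = 2\phi_5$ established in the first paragraph, $\partial\phi_6/\partial x_1 = \partial^2\phi_2/\partial x_1\,\partial x_3 = 2\,\partial\phi_5/\partial x_3$. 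Substituting yields exactly $2(\partial\phi_4/\partial x_3)x_2 + 2(\partial\phi_5/\partial x_3)x_4 = 2\phi_{12}$, matching the displayed $(4,1)$ entry. The remaining zeros in $DW_1$ reflect the $x_5$-independence of every $\phi_i$ and the absence of $x_2$-dependence in $\phi_1,\phi_2$ and their first partials.
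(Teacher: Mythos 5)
Your computation is correct and matches exactly what the paper intends: the paper gives no explicit proof of this proposition, instead encoding the derivation entirely in the dictionary of Section \ref{dic}, where each $\phi_i$ is labeled with the matrix entry it represents, and your chain-rule bookkeeping reproduces those identifications faithfully. The only substantive content beyond reading off gradients is exactly what you isolate---the $(4,1)$ entries of $DW$ and $DW_1$---and your observation that $\partial\phi_2/\partial x_1 = 2\phi_5$ follows from the fact that $\partial_{x_1}(-200x_3 s^{-3}) = 2\,\partial_{x_3}(-400x_1 s^{-3})$, combined with Clairaut's theorem to promote this to $\partial\phi_6/\partial x_1 = 2\,\partial\phi_5/\partial x_3 = 2\phi_{11}$ (hence $2\phi_{12}$ after contracting with $(x_2,x_4)$), is a clean way to avoid an independent second-derivative computation.
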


%The vector field $G$

\subsection{The vector field $G$} Let us consider the vector field $G=(x_2,\phi_1,x_4,\phi_2,x_6,\phi_{16},x_8,\phi_{17},\phi_{18})^T$, where the function $\phi_i$ are those defined on section \ref{dic}. For any fixed $a$ and $b$, the function that sends $t$ to 

 \begin{eqnarray*}
\left( F(t,a,b),\dot{F}(t,a,b),R(t,a,b),\dot{R}(t,a,b),F_a(t,a,b),\dot{F}_a(t,a,b), R_a(t,a,b),\dot{R}_a(t,a,b),\Theta_a(t,a,b)\right)^T
\end{eqnarray*}

 satisfies the differential equation $Y(t)=G(Y(t))$ with $Y(0)=(0,b,10,0,0,0,0,0,0)^T$
  
  \begin{prop} Using the functions $\phi_i$ defined on section \ref{dic}, we have that the derivative matrix  $DG$ of the vector field $G$,  and the derivative matrix $DG_1$ of the vector field $G_1=DG \, G$ are given by,

  $$ DG= \left(
\begin{array}{ccccccccc}
 0 & 1 & 0 & 0 & 0 & 0 & 0 & 0 & 0  \\
 \phi_4  & 0 & \phi_5 & 0 & 0 & 0 & 0 & 0 & 0 \\
 0 & 0 & 0 & 1 & 0 & 0 & 0 & 0 & 0 \\
 2 \phi_5 & 0 & \phi_6 & 0 & 0 & 0 & 0 & 0 & 0\\
 0 & 0 & 0 & 0 & 0 & 1 & 0 & 0 & 0\\
 \phi_{19} & 0 & \phi_{20} & 0 & \phi_4 & 0 & \phi_5 & 0 & 0\\
 0 & 0 & 0 & 0 & 0 & 0 & 0 & 1 & 0\\
 2 \phi_{20} & 0 & \phi_{22} & 0 & 2 \phi_5 & 0 & \phi_6 & 0 & 0\\
 0 & 0 & \phi_{23} & 0 & 0 & 0 & \phi_7 & 0 & 0
\end{array}
\right)$$

and 

  $$ DG_1= \left(
\begin{array}{ccccccccc}
 \phi_4& 0 & \phi_5 & 0 & 0 & 0 & 0 & 0 & 0  \\
 \phi_{10}  & \phi_4 & \phi_{12} & \phi_5  & 0 & 0 & 0 & 0 & 0 \\
 2 \phi_5 & 0 & \phi_6 & 0& 0 & 0 & 0 & 0 & 0 \\
 2 \phi_{12} & 2\phi_5 & \phi_{14} & \phi_6 & 0 & 0 & 0 & 0 & 0\\
 \phi_{19} & 0 & \phi_{20} & 0 & \phi_4 & 0 & \phi_5 & 0 & 0\\
 \phi_{30} & \phi_{19} & \phi_{34} & \phi_{20} & \phi_{10} & \phi_4 & \phi_{12} & \phi_5 & 0\\
 2\phi_{20} & 0 & \phi_{22} & 0 & 2\phi_5 & 0 & \phi_6 & 0 & 0\\
 2\phi_{34} & 2\phi_{20} & \phi_{39} & \phi_{22} & 2\phi_{12} & 2\phi_{5} & \phi_{14} & \phi_6 & 0\\
 0 & 0 & \phi_{40} & \phi_{23} & 0 & 0 & \phi_{15} & \phi_7 & 0
\end{array}
\right)$$

\end{prop}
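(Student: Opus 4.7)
The plan is to verify the two displayed matrices by direct computation, using only the definition $G=(x_2,\phi_1,x_4,\phi_2,x_6,\phi_{16},x_8,\phi_{17},\phi_{18})^T$ and the definitions of the auxiliary functions $\phi_i$ from Section \ref{dic}. First I would assemble $DG$ row by row. The odd-indexed rows (rows $1,3,5,7$) come from differentiating the components $x_2,x_4,x_6,x_8$, and immediately give the unit entries shown in those rows. For rows $2,4,6,8,9$ I would compute the gradients of $\phi_1,\phi_2,\phi_{16},\phi_{17},\phi_{18}$, then read off the identifications $\partial\phi_1/\partial x_1=\phi_4$, $\partial\phi_1/\partial x_3=\phi_5$, $\partial\phi_2/\partial x_1=2\phi_5$, $\partial\phi_2/\partial x_3=\phi_6$, and the analogous identities for $\phi_{16},\phi_{17},\phi_{18}$. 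These identifications are really just the naming conventions set up in Section \ref{dic}, so the proof of the first matrix is one line per row.

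Second, for $DG_1$ I would apply the product rule to $G_1(x)=DG(x)\,G(x)$. Component-wise, $G_1^i=\sum_k (DG)_{ik}\,G^k$, so
$$(DG_1)_{ij}=\sum_k \frac{\partial (DG)_{ik}}{\partial x_j}\,G^k+\sum_k (DG)_{ik}\,(DG)_{kj},$$
which expresses $DG_1$ as $(D^2G\cdot G)+DG\cdot DG$. The matrix product $DG\cdot DG$ is a mechanical calculation from the $DG$ already determined; it accounts for the appearance of the entries of $DG$ itself shifted into the lower-right $5{\times}5$ block of $DG_1$ and, after cancellations, many of the entries in the remaining rows. The tensor contraction $D^2G\cdot G$ introduces the second-derivative symbols $\phi_{10},\phi_{12},\phi_{14},\phi_{19},\phi_{20},\phi_{22},\phi_{23},\phi_{30},\phi_{34},\phi_{39},\phi_{40}$, each of which is, again by the conventions of Section \ref{dic}, precisely the partial derivative of some lower-indexed $\phi$ contracted with the relevant component of $G$.

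Finally I would check row by row that adding the two contributions reproduces the claimed formula. The kinematic rows (rows $1,3,5,7$ of $DG_1$) are automatic because $G_1^i$ for $i\in\{1,3,5,7\}$ equals $\phi_1,\phi_2,\phi_{16},\phi_{17}$ respectively, so their gradients coincide with rows $2,4,6,8$ of $DG$; this explains why rows $1,3,5,7$ of $DG_1$ match rows $2,4,6,8$ of $DG$ in the displayed matrix. The dynamical rows (rows $2,4,6,8,9$) require the full two-term calculation above and are where the new symbols $\phi_{10},\phi_{12},\phi_{14},\phi_{30},\phi_{34},\phi_{39},\phi_{40}$ first appear.

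I expect no conceptual difficulty: the whole argument is mechanical differentiation plus the dictionary of Section \ref{dic}. The main obstacle is purely organizational, namely keeping track of the $9\times 9$ bookkeeping and making sure every second-order partial derivative is correctly identified with its $\phi_i$ name; a single sign error or omitted term in the contraction $D^2G\cdot G$ would corrupt an entire row. For this reason I would perform the verification by first checking the block structure (the nested appearance of the $W$ and $W_1$ matrices of the previous proposition inside $DG$ and $DG_1$), which gives a strong sanity check, and then verify only the new entries introduced by the variational block.
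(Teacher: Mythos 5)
Your plan is the correct one: differentiate $G$ component by component to get $DG$, then compute $DG_1 = D^2G\cdot G + (DG)^2$ and match entries against the dictionary in Section~\ref{dic}, sanity-checking that the lower-right $5\times 5$ blocks reproduce $DW$ and $DW_1$ and that rows $1,3,5,7$ of $DG_1$ reproduce rows $2,4,6,8$ of $DG$. The paper gives no proof at all for this proposition (it is stated as a direct verification), so your outline is precisely the calculation the author leaves implicit.
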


 %The vector field U

\subsection{The vector field $U$} Let us consider the vector field 

$$U=(x_2,\phi_1,x_4,\phi_2,x_{10},\phi_{41},x_{12},\phi_{42})^T,$$ 
where the function $\phi_i$ are those defined on section \ref{dic}. For any fixed $a$ and $b$ the function

 \begin{eqnarray*}
  t\to& &\left( F(t,a,b),\dot{F}(t,a,b),R(t,a,b),\dot{R}(t,a,b),F_b(t,a,b),\dot{F}_b(t,a,b),\right.\\
 & & \left. \quad R_b(t,a,b),\dot{R}_b(t,a,b)\right)^T\
 \end{eqnarray*}
 
  satisfies the differential equation $Y(t)=U(Y(t))$ with $Y(0)=(0,b,10,0,0,1,0,0)^T$
  
  \begin{prop} Using the functions $\phi_i$ defined in section \ref{dic}, we have that the derivative matrix  $DU$ of the vector field $U$,  and the derivative matrix $DU_1$ of the vector field $U_1=DU \, U$ are given by,

  $$ DU= \left(
\begin{array}{cccccccc}
 0 & 1 & 0 & 0 & 0 & 0 & 0 & 0   \\
 \phi_4  & 0 & \phi_5 & 0 & 0 & 0 & 0 & 0 \\
 0 & 0 & 0 & 1 & 0 & 0 & 0 & 0  \\
 2 \phi_5 & 0 & \phi_6 & 0 & 0 & 0 & 0 & 0 \\
 0 & 0 & 0 & 0 & 0 & 1 & 0 & 0 \\
 \phi_{43} & 0 & \phi_{44} & 0 & \phi_4 & 0 & \phi_5 & 0 \\
 0 & 0 & 0 & 0 & 0 & 0 & 0 & 1 \\
 2 \phi_{44} & 0 & \phi_{45} & 0 & 2 \phi_5 & 0 & \phi_6 & 0 
 \end{array}
\right)$$

and 

  $$ DU_1= \left(
\begin{array}{cccccccc}
  \phi_4& 0 & \phi_5  & 0 & 0 & 0 & 0 & 0   \\
 \phi_{10}  & \phi_4 & \phi_{12} & \phi_5   & 0 & 0 & 0 & 0 \\
2 \phi_5 & 0 & \phi_6 & 0 & 0 & 0 & 0 & 0  \\
  2 \phi_{12} & 2\phi_5 & \phi_{14} & \phi_6  & 0 & 0 & 0 & 0 \\
 \phi_{43} & 0 & \phi_{44} & 0 & \phi_4 &  0 & \phi_5 & 0 \\
 \phi_{48} & \phi_{43} & \phi_{50} & \phi_{44} & \phi_{10} & \phi_4 & \phi_{12} & \phi_5 \\
  2\phi_{44} & 0 & \phi_{45} & 0 & 2\phi_5 & 0 & \phi_6 & 0 \\
 2\phi_{50} & 2\phi_{44} & \phi_{53} & \phi_{45} & 2\phi_{12} & 2\phi_5 & \phi_{14} & \phi_6 
\end{array}
\right)$$
\end{prop}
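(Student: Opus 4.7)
The plan is to verify the two displayed matrices by direct differentiation, exactly mirroring the pattern already employed for the analogous propositions about $DW, DW_1$ and $DG, DG_1$ stated above. The whole statement is really a compact tabulation: the auxiliary functions $\phi_i$ in section \ref{dic} are named precisely so that every partial derivative that arises when one differentiates $U$ (or $U_1$) can be identified by inspection.

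For $DU$ I would compute $\partial U^i/\partial x_j$ for each $i,j\in\{1,\dots,8\}$. The odd-indexed components $U^1=x_2$, $U^3=x_4$, $U^5=x_{10}$, $U^7=x_{12}$ yield rows with a single $1$ in the appropriate column and zeros elsewhere. The even-indexed components $U^2=\phi_1$, $U^4=\phi_2$, $U^6=\phi_{43}\cdot x_5+\phi_{44}\cdot x_7$-type combinations (namely $\phi_{41}$) and $U^8=\phi_{42}$ are the explicit rational combinations of powers of $x_3$ and $s=(4x_1^2+x_3^2)^{1/2}$ stipulated in section \ref{dic}. Their partial derivatives in each $x_j$ reduce, by construction, to one of $\phi_4,\phi_5,\phi_6,\phi_{43},\phi_{44},\phi_{45}$ or to $0$. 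Note that rows $1$ through $4$ of $DU$ agree verbatim with the corresponding rows of $DW$ and $DG$, reflecting the fact that $(F,\dot F,R,\dot R)$ obey the same subsystem in all three cases.

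For $DU_1$ I start from the product rule: since $U_1(x)=DU(x)\,U(x)$,
\[ (DU_1)_{ij}(x) \;=\; \sum_k \frac{\partial (DU)_{ik}}{\partial x_j}(x)\,U^k(x) \;+\; \sum_k (DU)_{ik}(x)\,(DU)_{kj}(x), \]
which in matrix form reads $DU_1 = H[U] + (DU)^2$, where $H[U]$ is the Hessian of $U$ contracted with $U$ on one index. The second summand is immediate from the $DU$ just established. For the first summand I would differentiate the entries $\phi_4,\phi_5,\phi_6,\phi_{43},\phi_{44},\phi_{45}$ and contract with $U$; the resulting combinations are exactly the symbols $\phi_{48},\phi_{50},\phi_{53}$ introduced in section \ref{dic}, together with the already-named $\phi_{10},\phi_{12},\phi_{14}$ inherited from the analyses of $W$ and $G$ (the top-left $4\times 4$ block of $DU_1$ must coincide with that of $DW_1$ and $DG_1$, for the same structural reason as before).

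The only real obstacle is organizational rather than mathematical: there are several dozen entries to check and several dozen $\phi_i$ to keep straight. In practice I would run the differentiation in a computer algebra system, simplify each entry symbolically, and match it against the claimed $\phi_\ell$ one box at a time. No identity beyond the defining formulas of the $\phi_i$ in section \ref{dic} is needed, so once the table of matches is complete the proposition is established.
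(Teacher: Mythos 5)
Your approach is correct and is essentially the direct-computation route that the paper implicitly relies on (the proposition is stated in the paper without an explicit proof, the work being absorbed into the definitions of the $\phi_i$ in Section \ref{dic}): differentiate $U$ entry by entry to read off $DU$, then use the product rule $DU_1 = \bigl(\partial_j (DU)_{ik}\bigr)U^k + (DU)^2$ and match the resulting expressions against the $\phi_\ell$ that were named for exactly this purpose ($\phi_{46},\phi_{47},\phi_{48},\phi_{50},\phi_{53}$ for the new even rows, $\phi_{10},\phi_{12},\phi_{14}$ carried over from the $W$ analysis for the top-left $4\times4$ block, and the odd rows reducing to $(DU)^2$ since $U^1,U^3,U^5,U^7$ are linear).

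One detail you should correct before carrying this out: the eight state variables of $U$ are not $x_1,\dots,x_8$ but $x_1,x_2,x_3,x_4,x_9,x_{10},x_{11},x_{12}$ (the dictionary reserves $x_5,\dots,x_8$ for $F_a,\dot F_a,R_a,\dot R_a$, which belong to the vector field $G$, not $U$), so the columns of $DU$ and $DU_1$ are partials with respect to $x_1,x_2,x_3,x_4,x_9,x_{10},x_{11},x_{12}$ in that order. In particular $U^5 = x_{10}$ (not $x_6$), $U^7=x_{12}$ (not $x_8$), and the identity $\phi_{41}=x_9\phi_4 + x_{11}\phi_5$ (not an expression in $x_5,x_7$ or in $\phi_{43},\phi_{44}$) is what makes $(DU)_{65}=\partial\phi_{41}/\partial x_9=\phi_4$ and $(DU)_{67}=\partial\phi_{41}/\partial x_{11}=\phi_5$. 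With that indexing fixed, the matching you describe goes through; for instance one checks $\partial_{x_1}\phi_4=\phi_9$, $\partial_{x_1}\phi_5=\phi_8$, $\partial_{x_1}\phi_8=\phi_{26}$, $\partial_{x_1}\phi_9=\phi_{28}$, $\partial_{x_1}\phi_{11}=\phi_{25}$, which combine to give $(DU_1)_{61}=x_{11}\phi_{27}+x_9\phi_{47}+\phi_{46}=\phi_{48}$ exactly as claimed.
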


\subsection{Reduced Periodic Solutions} In this section we use theorem \ref{error} and the Poincare -Miranda Theorem to prove the existence of three reduced periodic solutions, one on each rectangle in figure \ref{fig3}. 

\begin{rem} Several lemmas in this section will be using the Round Taylor method and therefore it will be using Theorem \ref{error} to estimate the values of the solution of the ODE's. If we take a look a the hypothesis of this lemma we notice that there is a number $\epsilon$ that has to be greater than $M_0h+\tilde{H}$. This $\epsilon$ will be $\frac{1}{1000}$ in all proofs that use the Taylor method in this paper. Some of these lemmas contains the variable $\epsilon$, in each case, it just refers to a small number giving an estimate of the error.
\end{rem}

\begin{lem}\label{bds} Let $\epsilon=\frac{2677451}{100000000}$. For any $a\in[a_0-3 (sa+da),a_0+3 (sa+da)]$, $b\in [b_0-sb,b_0+sb]$ and $t\in [t_0 - 6 (st + dt),t_0 + 6(st+ dt)]$ we have that 

$$\displaystyle{|\Theta_a(t,a,b)-\frac{536760312951}{20000000000000}|<\epsilon}$$ 

and,

$$|F_a(t,a,b)-\frac{3032500537707}{10000000000000}|<\epsilon,\quad |\dot{F}_a(t,a,b)-\frac{11824770099363}{25000000000000}|<\epsilon$$

$$ |R_a(t,a,b)-\frac{68073031375453}{25000000000000}|<\epsilon,\quad |\dot{R}_a(t,a,b)-\frac{164497338366219}{100000000000000}|<\epsilon$$ 
\end{lem}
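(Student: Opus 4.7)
The plan is to apply the Round Taylor Method (Theorem \ref{error}) to the enlarged system $\dot Y=G(Y)$, whose components simultaneously encode $F,\dot F,R,\dot R,F_a,\dot F_a,R_a,\dot R_a,\Theta_a$, and then to propagate the resulting single-point bound to the whole box $(t,a,b)\in[t_0-6(st+dt),t_0+6(st+dt)]\times[a_0-3(sa+da),a_0+3(sa+da)]\times[b_0-sb,b_0+sb]$ by means of the continuous-dependence estimate Theorem \ref{error2}. Concretely, I would first fix $a=a_0$ and $b=b_0$ and run the Round Taylor Method with a rational step $h$ and rounding $H$, integrating from $t=0$ on the initial condition $Y(0)=(0,b_0,10,0,0,0,0,0,0)^T$ up to a multiple $kh$ of the step close to $t_0$. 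Theorem \ref{error} then certifies, provided the computed iterates $z_i$ stay in a rectangle $U_1$ contained in a slightly larger $U_2$ on which $G$ and its Taylor operators $F_1,\dots,F_{m-1},F_m$ admit the required bounds $M_0,M_i,K_j$, that $|Y(kh)-z_k|\le \tilde H$. The five rational centers in the statement are taken to be the relevant components of this $z_k$, so that part of the argument reduces to a machine-level check that $\tilde H$ plus the variation across the box is smaller than $\epsilon$.

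Next I would cover the remaining $(t,a,b)$ in the box by three applications of Theorem \ref{error2}. For the $b$-variation, $b$ enters only through the initial condition $Y_2(0)=b$, so comparing two solutions of $\dot Y=G(Y)$ with initial conditions differing by at most $sb$ gives $|Y(t,a_0,b)-Y(t,a_0,b_0)|\le sb\cdot e^{K|t|}$, where $K$ is a Lipschitz constant for $G$ on a neighborhood of the orbit. For the $a$-variation, the vector field itself depends on $a$ through the $\phi_i$, so I would set $f(u)=G(u;a)$, $g(u)=G(u;a_0)$ and estimate $|f-g|$ uniformly on $U_2$ by $\sup_{U_2}|\partial G/\partial a|\cdot 3(sa+da)$; Theorem \ref{error2} then gives $|Y(t,a,b_0)-Y(t,a_0,b_0)|\le \tfrac{\epsilon'}{K}\bigl(e^{K|t|}-1\bigr)$ with $\epsilon'$ the product above. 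The $t$-variation over an interval of radius $6(st+dt)$ around $kh$ is bounded by $\sup|G|\cdot\bigl(6(st+dt)+h\bigr)$ on the orbit. Summing the three contributions with $\tilde H$ produces a rigorous upper bound for $|Y(t,a,b)-z_k|$ that is valid uniformly over the entire box; specializing to the five components in question and matching them with the rational centers listed completes the five stated inequalities.

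The main obstacle is choosing the Taylor order $m$, the step $h$, the rounding $H$, and the enclosing rectangles $U_1,U_2$ so that all five componentwise estimates simultaneously stay below $\epsilon=2677451/10^8$, despite the exponential factors $e^{Lkh}$ appearing both in $\tilde H$ and in the variation bounds. The functions $\phi_i$ of the dictionary section appear quite explicitly in $G$, $DG$ and $DG_1$, so uniform $a$-Lipschitz, state-Lipschitz and sup-norm bounds for them on an enlarged box around the true orbit can be produced symbolically once and reused. After that, the final comparison is a finite rational inequality that the computer algebra system can verify exactly, which is precisely the paradigm the round-off-free design of the Round Taylor Method was built to support.
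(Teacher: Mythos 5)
Your proposal is correct and matches the paper's own argument almost step for step: run the Round Taylor Method of order~2 on the nine-dimensional system $\dot Y = G(Y)$ at $(a_0,b_0)$ with $h=t_0/30000$, $k=30000$, $H=10^{-14}$, certify the iterates stay in the rectangle from the dictionary section so Theorem~\ref{error} gives $|Y(t_0,a_0,b_0)-z_k|\le\tilde H$, then extend to the full box via Theorem~\ref{error2} plus a $t$-Lipschitz estimate using $M_{0g}$. The only cosmetic difference is that the paper runs a \emph{single} application of Theorem~\ref{error2} comparing the solution of $G$ (general $a$, initial $b$) to the solution of $G_0$ (with $a_0$, initial $b_0$), absorbing both the initial-condition gap $|b-b_0|$ and the vector-field gap $\delta G=G-G_0=(0,0,0,\phi_{54},0,0,0,\phi_{56},\phi_{57})$ at once, whereas you propose two separate applications and a triangle inequality; both work, and your extra ``$+h$'' in the $t$-term is superfluous since $kh=t_0$ exactly.
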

\begin{proof} 

Let us consider the following intervals, 

$$I_1=\left[-\frac{1}{100},\frac{62}{25}\right]\, I_2=\left[-\frac{1}{100},\frac{3}{2}\right] \, I_3=\left[\frac{189}{20},\frac{1001}{100}\right]\, I_4= \left[-\frac{33}{100},\frac{1}{100}\right] \, I_5=\left[-\frac{1}{100},\frac{31}{100}\right]$$

$$I_6=\left[-\frac{1}{100},\frac{12}{25}\right] \, I_7=\left[-\frac{1}{100},\frac{69}{25}\right] \, I_8= \left[-\frac{1}{100},\frac{42}{25}\right] \, J_9=\left[ -\frac{1}{100}, \frac{3}{25} \right]$$

A direct computation using the bounds in section \ref{dic} shows that if

$$ M_{0g}= \frac{42}{25},\quad K_{0g}=\frac{305541}{125000},\quad K_{1g}=\frac{249309}{100000},\quad  M_{1g}= \frac{778131}{1000000} \quad M_{2g}= \frac{246743}{250000}  $$

$$  M_{3g}=\frac{374443}{500000}, \quad  M_{4g}= \frac{133409}{100000} ,\quad  M_{5g}=  \frac{1345793}{1000000}, \quad  M_{6g}=  \frac{2429239}{1000000} $$

$$  M_{7g}=  \frac{833241}{500000}, \quad M_{8g}=  \frac{3298559}{1000000},  \quad  M_{9}=   \frac{182893}{1000000},  \quad H=10^{-14}, \quad h=\frac{t_0}{30000} \, ,$$

then, for values of $(x_1,\dots,x_8)$ with $x_i\in I_i$ we have that $|G^j|<M_{0g}$ where $G^1,\dots , G^9$ are the entries  of the vector field $G$; $|G_2^j|<M_{jg}$ where $G_2^1,\dots , G_2^9$ are the entries  of the vector field $G_2=DG_1\, G$. Recall that $G_1=DG\, G$.  Moreover we have that

$$|DG|<K_{0g},\quad |DG_1|<K_{1g} $$

The Round Taylor method of order 2 using the vector field $G$ with $a=a_0$,  $k=30000$ and initial conditions $Y(0)=(0,b_0,10,0,0,0,0,0,0)^T$ produces a sequence $\{z_i\}_{i=1}^k$ with $z_{30000}$ equal to

\begin{eqnarray*}
& &\hskip-1cm\left(  \frac{247458249564811}{100000000000000},\frac{13245901}{100000000000000},\frac{189061430242601}{20000000000000},\frac{1795639}{12500000000000},\right. \\
& &\hskip-.8cm  \left. \frac{3032500537707}{10000000000000},\frac{11824770099363}{25000000000000},\frac{68073031375453}{25000000000000},\frac{164497338366219}{100000000000000},\frac{536760312951}{20000000000000}\right)
\end{eqnarray*}

In this case $\tilde{H}=\frac{M_g \frac{h^2}{6}+\frac{H}{h}}{L}\left(\hbox{e}^{Lkh}-1\right)\approx 0.0000019$ where $M_g=\sqrt{M_{1g}^2+\dots +M_{9g}^2}$ and $L=K_{0g}+K_{1g} \frac{h}{2}$. A direct verification shows that, for every $j=1,\dots , 8$ and $i=1,\dots, 30000$,  the $j^{th}$ entry of $z_i$ is within a distance $\epsilon$ of the boundary of the interval $I_j$. Also we have that the last entry of $z_i$ is within a distance $\frac{1}{1000}$ of the boundary of the interval $J_9$. By Theorem \ref{error} we conclude that for all $i=1,\dots, k$, 

 \begin{eqnarray*}
& &\left( F(\frac{i h}{k},a_0,b_0),\dot{F}(\frac{i h}{k},a_0,b_0),R(\frac{i h}{k},a_0,b_0),\dot{R}(\frac{i h}{k},a_0,b_0),F_a(\frac{i h}{k},a_0,b_0),\dot{F}_a(\frac{i h}{k},a_0,b_0),\right.\\
 & & \left. \quad R_a(\frac{i h}{k},a_0,b_0),\dot{R}_a(\frac{i h}{k},a_0,b_0),\Theta_a(\frac{i h}{k},a_0,b_0)\right)^T\
 \end{eqnarray*}

is within a distance $\tilde{H}$ of $z_i$. Notice that for any function $\rho$, under the assumption that $|\dot{\rho}(\tau,a_0,b_0)|<M_\rho$ for all $\tau$ between $t$ and $t_0$,  we have that 

\begin{eqnarray*}
|\rho(t,a,b)-\rho(t_0,a_0,b_0)| &\le & |\rho(t,a,b)-\rho(t,a_0,b_0)|+|\rho(t,a_0,b_0)-\rho(t_0,a_0,b_0)|\\
     &\le &  |\rho(t,a,b)-\rho(t,a_0,b_0)| + M_\rho\, (t-t_0),
\end{eqnarray*}

 We will use the observation above to finish the proof of the lemma. We will bound  $|\rho(t,a,b)-\rho(t,a_0,b_0)|$ using Theorem \ref{error2}
with the vector fields $G$ and $G_0$, where $G_0$ is the vector field $G$ with $a$ replaced by $a_0$. A direct computation shows that

$$\delta G=G-G_0=(0,0,0,\phi_{54},0,0,0,\phi_{56},\phi_{57})$$

Using the information on section \ref{dic} we obtain that $|\delta_G|<\frac{23}{500000}$. Therefore, using Theorem \ref{error2} we conclude that the values of the solution of the differential equation using $G$ (with a general $a$ and $b$) compare with those of the solution of the differential equation using $G_0$ differ by less than

$$|b-b_0| \hbox{e}^{K_{0g}(t_0+6(st+dt))}+\frac{23}{500000 K_{0g}}\left( \hbox{e}^{K_{0g}(t_0+6(st+dt))}-1 \right)< \frac{267131}{10000000} $$

Therefore we have that for any $a\in[a_0-3 (sa+da),a_0+3 (sa+da)]$, $b\in [b_0-sb,b_0+sb]$ and $t\in [t_0 - 6 (st + dt),t_0 + 6(st+ dt)]$

$$|F_a(t,a,b)- \frac{3032500537707}{10000000000000}|<\frac{267131}{10000000}+6(st+dt) M_{0g}+\tilde{H}\le\frac{2677451}{100000000}$$

We have similar computations for the functions $\Theta_a$, $\dot{F}_a$, $R_a$ and $\dot{R}_a$. This finishes the proof.
\end{proof}

%Corollary dFa>0 and dRa>0

\begin{cor} \label{dFapos}
For any $a\in[a_0-3 (sa+da),a_0+3 (sa+da)]$, $b\in [b_0-sb,b_0+sb]$ and $t\in [t_0 - 6 (st + dt),t_0 + 6(st+ dt)]$ , $\dot{F}_a(t,a,b)>0$,
$\dot{R}_a(t,a,b)>0$, and $|\Theta_a(t,a,b)|<  \frac{27}{1000}$
\end{cor}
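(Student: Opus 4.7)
The plan is to derive all three conclusions directly as arithmetic corollaries of the three estimates provided by Lemma~\ref{bds}, using $\epsilon = \frac{2677451}{100000000}$. Each inequality reduces to a strict comparison between two explicit rationals, which can be verified by exact integer arithmetic (in exactly the spirit of the Round Taylor framework).

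First, I would handle $\dot{F}_a(t,a,b) > 0$: Lemma~\ref{bds} gives $\dot{F}_a(t,a,b) > \frac{11824770099363}{25000000000000} - \epsilon$, and since the center is approximately $0.473$ while $\epsilon \approx 0.027$, the right-hand side is a manifestly positive rational. The same argument, with center $\frac{164497338366219}{100000000000000} \approx 1.645$, disposes of $\dot{R}_a(t,a,b) > 0$ with an even larger safety margin. Both checks amount to subtracting two rationals, reducing to a common denominator, and confirming that the resulting numerator is a positive integer.

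For the bound on $\Theta_a$, the idea is to apply the triangle inequality
\[
|\Theta_a(t,a,b)| \;\le\; \Bigl|\Theta_a(t,a,b) - \tfrac{536760312951}{20000000000000}\Bigr| + \tfrac{536760312951}{20000000000000} \;<\; \epsilon + \tfrac{536760312951}{20000000000000},
\]
and then to verify that this final rational is below $\frac{27}{1000}$ by clearing denominators and checking a single integer inequality.

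There is no conceptual obstacle here; the entire proof is a handful of exact rational inequality checks whose truth is decided by one integer comparison apiece. The only risk is that one of the comparisons turns out to be too tight for the specific $\epsilon$ produced by Lemma~\ref{bds}, and in that case the remedy would be purely computational---tightening $\epsilon$ by decreasing $h$ or $H$, or by raising the Taylor order $m$---rather than altering the structure of the argument.
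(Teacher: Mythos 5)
Your argument for $\dot F_a>0$ and $\dot R_a>0$ is fine and matches the (implicit) argument in the paper: each reduces to checking that the rational quantity from Lemma~\ref{bds} exceeds $\epsilon=\frac{2677451}{100000000}$ (indeed $\frac{11824770099363}{25000000000000}\approx 0.47$ and $\frac{164497338366219}{100000000000000}\approx 1.64$, both comfortably larger than $\epsilon\approx 0.0268$).

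The third inequality is where your plan breaks: the integer check you defer to does \emph{not} come out in the required direction. Putting everything over the denominator $10^{14}$,
\[
\frac{536760312951}{20000000000000}+\frac{2677451}{100000000}
=\frac{2683801564755+2677451000000}{10^{14}}
=\frac{5361252564755}{10^{14}},
\]
while $\frac{27}{1000}=\frac{2700000000000}{10^{14}}$. Since $5361252564755>2700000000000$, the triangle inequality applied to Lemma~\ref{bds} only yields $|\Theta_a(t,a,b)|<\frac{5361252564755}{10^{14}}\approx 0.0536$, not the claimed $\frac{27}{1000}=0.027$. The central value $\frac{536760312951}{20000000000000}\approx 0.02684$ and the error $\epsilon\approx 0.02677$ are almost the same size, so the naive bound loses a full factor of two. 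You did flag this possibility, but the remedy you propose (shrinking $h$ or $H$, or raising the Taylor order) would not help: $\epsilon$ in Lemma~\ref{bds} is dominated not by discretization or round-off, but by the Gronwall term $|b-b_0|\,e^{K_{0g}T}$ coming from Theorem~\ref{error2}, which is insensitive to $h,H,m$. Closing this gap requires a structurally different estimate for $\Theta_a$ alone — for instance, exploiting that $\Theta_a$ is a passive integral of $\phi_{18}(R,R_a)$ and no other component depends on it, so its $b$-dependence is a single integral of the errors in $R,R_a$ rather than a Gronwall-amplified one — not a tightening of numerical parameters.
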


%Lemma on the dFb and dRb

\begin{lem}\label{bdsb} Let $\epsilon= \frac{2568201}{100000000}$. For any $a\in[a_0-3 (sa+da),a_0+3 (sa+da)]$, $b\in [b_0-sb,b_0+sb]$ and $t\in [t_0 - 6 (st + dt),t_0 + 6(st+ dt)]$ we have that

$$|\dot{R}_b(t,a,b)-\frac{88229751956717}{100000000000000}|<\epsilon,\quad |\dot{F}_b(t,a,b)-\frac{50798112898451}{100000000000000}|<\epsilon$$

\end{lem}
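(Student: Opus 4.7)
The plan is to mirror the proof of Lemma \ref{bds} almost step for step, but using the vector field $U$ in place of $G$. First I would run the Round Taylor method of order $2$ for $\dot Y = U(Y)$ with $a = a_0$, initial condition $Y(0) = (0,b_0,10,0,0,1,0,0)^T$, step $h = t_0/30000$, rounding $H = 10^{-14}$, and $k = 30000$ iterations. The sixth and eighth components of the terminal iterate $z_{30000}$ should equal the rational numbers $\tfrac{50798112898451}{10^{14}}$ and $\tfrac{88229751956717}{10^{14}}$ that appear on the right in the statement; these are the numerical values of $\dot F_b(t_0,a_0,b_0)$ and $\dot R_b(t_0,a_0,b_0)$ up to error $\tilde H$.

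Next I would choose intervals $I_1,\dots,I_8$ that contain all iterates $z_i$; the first four can be copied verbatim from Lemma \ref{bds} since the first four equations of $U$ coincide with those of $G$, and the remaining four must be taken wide enough to house the $b$-derivative trajectory. Using the bounds catalogued in Section \ref{dic}, I would then read off constants $M_{0u}$, $K_{0u}$, $K_{1u}$, and $M_{1u},\dots,M_{8u}$ controlling $U$, $DU$, $DU_1$ and the components of $U_2 = DU_1\,U$ on the enlarged box $U_2$. Plugging these into the formula of Theorem \ref{error} and verifying that every $z_i$ stays at distance at least $\epsilon = \tfrac{1}{1000}$ from the boundary of the $I_j$ lets me conclude $|Y(ih) - z_i| \le \tilde H$ for all $i \le k$.

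To upgrade from the single trajectory at $(a_0,b_0)$ to all $(a,b)$ in the prescribed box and to all $t \in [t_0 - 6(st+dt), t_0 + 6(st+dt)]$, I would apply Theorem \ref{error2} to $U$ versus $U_0$ (the same field with $a$ frozen at $a_0$). The difference $\delta U = U - U_0$ is supported only in the components corresponding to $\dot R$ and $\dot R_b$, and can be bounded using Section \ref{dic} in the style of the $\tfrac{23}{500000}$ estimate in Lemma \ref{bds}. Combining the resulting Lipschitz bound with the discrepancy $|b - b_0| \le sb$ in initial conditions and with the crude shift $M_{0u}\cdot 6(st+dt)$ absorbing the variation in $t$, then adding $\tilde H$, should produce a total error smaller than $\epsilon = \tfrac{2568201}{10^8}$ on both $\dot F_b$ and $\dot R_b$.

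The main obstacle is not conceptual but rather the certified arithmetic: carrying out $30000$ iterations of the Round Taylor method in exact rationals (with floor rounding at each step) and checking at every iteration that all eight coordinates of $z_i$ remain well inside the chosen $I_j$. The delicate design choice is calibrating the widths of $I_5,\dots,I_8$: too narrow and some iterate escapes; too wide and the constants $K_{0u}$, $K_{1u}$, $M_{ju}$ blow up, making $\tilde H$ and therefore the final error exceed $\epsilon$. Once the intervals and constants are tuned correctly, the rest of the argument is a direct transcription of the corresponding paragraphs of the proof of Lemma \ref{bds}.
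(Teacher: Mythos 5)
Your proposal tracks the paper's proof of Lemma \ref{bdsb} essentially step for step: same vector field $U$, same Round Taylor run at $(a_0,b_0)$ with $k=30000$, $h=t_0/30000$, $H=10^{-14}$, same use of Theorem \ref{error} on a box $I_1\times\cdots\times I_4\times I_9\times\cdots\times I_{12}$, same comparison to $U_0$ via Theorem \ref{error2} with $\delta U=(0,0,0,\phi_{54},0,0,0,\phi_{58})$, and the same final assembly $|b-b_0|e^{K_{0u}\cdot}+\tfrac{|\delta U|}{K_{0u}}(e^{K_{0u}\cdot}-1)+6(st+dt)M_{0u}+\tilde H<\epsilon$. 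The only cosmetic difference is index relabeling of the last four intervals; the substance is identical to the paper's argument.
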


%Beginning proof of the lemma that estimates dFb and dRb

\begin{proof} 

Let us consider the following intervals, 

$$I_1=\left[-\frac{1}{100},\frac{62}{25}\right]\, I_2=\left[-\frac{1}{100},\frac{3}{2}\right] \, I_3=\left[\frac{189}{20},\frac{1001}{100}\right]\, I_4= \left[-\frac{33}{100},\frac{1}{100}\right] $$

$$I_9=\left[-\frac{1}{100},\frac{101}{50}\right]\, I_{10}=\left[\frac{1}{2},\frac{101}{100}\right] \,  I_{11}=\left[-\frac{1}{100},\frac{81}{100}\right] \, I_{12}=\left[-\frac{1}{100},\frac{89}{100}\right]$$

A direct computation using section \ref{dic} shows that if

$$ M_{0u}= \frac{3}{2},\quad K_{0u}=\frac{1226931}{500000},\quad K_{1u}=\frac{2557349}{1000000},\quad  M_{1u}= \frac{778131}{1000000}\quad M_{2u}= \frac{246743}{250000} $$

$$  M_{3u}=\frac{374443}{500000}, \quad  M_{4u}=\frac{133409}{100000} ,\quad  M_{5u}=  \frac{765259}{500000}, \quad  M_{6u}= \frac{533571}{200000}$$

$$M_{7u}= \frac{1790753}{1000000}\quad  M_{8u}= \frac{711267}{200000}, \quad H=10^{-14}, \quad h=\frac{t_0}{30000}\, , $$

then, for values of $(x_1,x_2,x_3,x_4,x_9,x_{10},x_{11},x_{12})$ with $x_i\in I_i$ we have that $|U^j|<M_{0g}$ where $U^1,\dots , U^8$ are the entries  of the vector field $U$; $|U_2^j|<M_{ju}$ where $U_2^1,\dots , U_2^8$ are the entries  of the vector field $U_2=DU_1\, U$. Recall that $U_1=DU\, U$.  Moreover we have that

 %Still doing proof of the lemma that estimates dFb and dRb

$$|DU|<K_{0u},\quad |DU_1|<K_{1u} $$

The Round Taylor method of order 2 using the vector field $U$ with $a=a_0$,  $k=30000$ and initial conditions $Y(0)=(0,b_0,10,0,0,1,0,0)^T$ produces a sequence $\{z_i\}_{i=1}^k$ with $z_{30000}$ equal to

\begin{eqnarray*}
& &\hskip-1cm\left(  \frac{247458249564811}{100000000000000},\frac{13245901}{100000000000000},\frac{189061430242601}{20000000000000},\frac{1795639}{12500000000000},\right. \\
& &\hskip-.8cm \left.  \frac{25138479462137}{12500000000000},\frac{50798112898451}{100000000000000},\frac{20014508374143}{25000000000000},\frac{88229751956717}{100000000000000}\right)
\end{eqnarray*}

In this case $\tilde{H}=\frac{M_u \frac{h^2}{6}+\frac{H}{h}}{L}\left(\hbox{e}^{Lkh}-1\right)< \frac{209}{100000000}$ where $M_u=\sqrt{M_{1u}^2+\dots +M_{8u}^2}$ and $L=K_{0u}+K_{1u} \frac{h}{2}$. A direct verification shows that, for every $j=1,\dots , 8$ and $i=1,\dots, 30000$, 
the $j^{th}$ entry of $z_i$ is within a distance $\frac{1}{1000}$ of the boundary of the interval $I_j$ when $j=1,2,3,4$ and the $j^{th}$ entry of $z_i$ is within a distance $\frac{1}{1000}$ of the boundary of the interval $I_{j+4}$ when $j=5,6,7,8$. By Theorem \ref{error} we conclude that for all $i=1,\dots, k$, 

 \begin{eqnarray*}
& &\left( F(\frac{i h}{k},a_0,b_0),\dot{F}(\frac{i h}{k},a_0,b_0),R(\frac{i h}{k},a_0,b_0),\dot{R}(\frac{i h}{k},a_0,b_0),F_b(\frac{i h}{k},a_0,b_0),\dot{F}_b(\frac{i h}{k},a_0,b_0),\right.\\
 & & \left. \quad R_b(\frac{i h}{k},a_0,b_0),\dot{R}_b(\frac{i h}{k},a_0,b_0)\right)^T\
 \end{eqnarray*}

is within a distance $\tilde{H}$ of $z_i$. Notice that for any function $\rho$, under the assumption that $|\dot{\rho}(\tau,a_0,b_0)|<M_\rho$ for all $\tau$ between $t$ and $t_0$,  we have that 

\begin{eqnarray*}
|\rho(t,a,b)-\rho(t_0,a_0,b_0)| &\le & |\rho(t,a,b)-\rho(t,a_0,b_0)|+|\rho(t,a_0,b_0)-\rho(t_0,a_0,b_0)|\\
     &\le &  |\rho(t,a,b)-\rho(t,a_0,b_0)| + M_\rho\, (t-t_0),
\end{eqnarray*}

 %Still doing proof of the lemma that estimates dFb and dRb

 We will use the observation above to finish the proof of the lemma. We will bound  $|\rho(t,a,b)-\rho(t,a_0,b_0)|$ using Theorem \ref{error2}
with the vector fields $U$ and $U_0$, where $U_0$ is the vector field $U$ with $a$ replaced by $a_0$. A direct computation shows that
%Hasta acá voy
$$\delta U=U-U_0=(0,0,0,\phi_{54},0,0,0,\phi_{58})$$

Using the information on section \ref{dic} we obtain that $|\delta_U|<\frac{1}{25000}$. Therefore, using Theorem \ref{error2} we conclude that the values of the the solution of the differential equation using $U$ (with a general $a$ and $b$) compare with those of the solution of the differential equation using $U_0$ differ by less than

$$|b-b_0| \hbox{e}^{K_{0u}(t_0+6(st+dt))}+\frac{1}{25000 K_{0u}}\left( \hbox{e}^{K_{0u}(t_0+6(st+dt))}-1 \right)< \frac{1281341}{50000000}$$

Therefore we have that for any $a\in[a_0-3 (sa+da),a_0+3 (sa+da)]$, $b\in [b_0-sb,b_0+sb]$ and $t\in [t_0 - 6 (st + dt),t_0 + 6(st+ dt)]$

$$|\dot{F}_b(t,a,b)- \frac{50798112898451}{100000000000000}|<\frac{1281341}{50000000}+6(st+dt) M_{0u}+\tilde{H}\le  \frac{2568201}{100000000} $$

We have a  similar computation for the function $\dot{R}_b$. This finishes the proof.
\end{proof}

%End proof of the lemma that estimates dFb and dRb

%%%%%%%%%%%%%%%%%%%%%%%%%%%%%%%%

%Lemma using the vector field W

%%%%%%%%%%%%%%%%%%%%%%%%%%%%%%%%%

\begin{lem}\label{bdsw} Let $\epsilon=\frac{134567}{40000000}$. For any $a\in[a_0-3 (sa+da),a_0+3 (sa+da)]$, $b\in [b_0-sb,b_0+sb]$ and $t\in [t_0 - 6 (st + dt),t_0 + 6(st+ dt)]$ we have that 

$$|F(t,a,b)-\frac{247458249564811}{100000000000000}|<\epsilon,\quad |R(t,a,b)-\frac{189061430242601}{20000000000000}|<\epsilon$$
 
\end{lem}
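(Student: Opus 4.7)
The plan is to mirror the arguments of Lemmas \ref{bds} and \ref{bdsb}, but applied to the much smaller vector field $W=(x_2,\phi_1,x_4,\phi_2,\phi_3)^T$ from section \ref{sm}, whose flow tracks exactly the five functions $(F,\dot F,R,\dot R,\Theta)$. Because the partial-derivative components are absent, the required Lipschitz and second-derivative constants will be smaller and the resulting $\tilde H$ will be noticeably tighter, which is what lets the tolerance drop from the $\sim 2.6\times 10^{-2}$ of Lemmas \ref{bds}--\ref{bdsb} to the sharper $\epsilon=\tfrac{134567}{40000000}\approx 3.36\times 10^{-3}$.

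First I would pick the intervals $I_1,\dots,I_5$ for the Round Taylor estimate; the first four can reuse the trapping intervals from the earlier lemmas, namely $I_1=[-1/100,62/25]$, $I_2=[-1/100,3/2]$, $I_3=[189/20,1001/100]$, $I_4=[-33/100,1/100]$, together with some interval $I_5$ of the form $[-1/100,c]$ for $\Theta$ (where $c$ is slightly larger than $\tfrac{7\pi}{18}$, comfortably bigger than what $\Theta$ reaches over the time window). Using the global $\phi_i$-bounds of section \ref{dic}, I would then read off constants $M_{0w},M_{1w},\dots,M_{5w},K_{0w},K_{1w}$ bounding, respectively, the components of $W$ (in particular $M_{0w}$ bounds $|W^j|$ by roughly the same $42/25$ figure used before), the components of $W_2=DW_1\,W$, and the entry-wise matrix norms $|DW|,|DW_1|$ on $I_1\times\cdots\times I_5$.

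Next I would execute the Round Taylor Method of order $2$ for the vector field $W$ with $a=a_0$, $b=b_0$, initial condition $Y(0)=(0,b_0,10,0,0)^T$, step $h=t_0/30000$, rounding $H=10^{-14}$, and $k=30000$ iterations; the tabulated endpoint $z_{30000}$ will have first component $\tfrac{247458249564811}{10^{14}}$ and third component $\tfrac{189061430242601}{2\cdot 10^{13}}$ (agreeing with the earlier lemmas, which is the sanity check). A routine calculation gives $\tilde H=\frac{M_w h^2/6+H/h}{L}(e^{Lkh}-1)$ with $L=K_{0w}+K_{1w}h/2$, and one verifies (the verification is mechanical) that each $z_i$ stays at distance at least some safety margin $\epsilon_0>M_{0w}h+\tilde H$ inside the closure of $I_1\times\cdots\times I_5$, so Theorem \ref{error} applies and yields $|(F,\dot F,R,\dot R,\Theta)(ih,a_0,b_0)-z_i|\le\tilde H$ for all $i\le k$, and in particular at $i=k$, i.e.\ at $t=t_0$.

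Finally, to upgrade the pointwise estimate at $(t_0,a_0,b_0)$ to the full box, I would combine three error sources exactly as in Lemmas \ref{bds} and \ref{bdsb}: (i) the Taylor error $\tilde H$ just established; (ii) the parameter perturbation $|F(t,a,b)-F(t,a_0,b_0)|$ and similarly for $R$, obtained from Theorem \ref{error2} applied to $W$ and the vector field $W_0$ in which $a$ is replaced by $a_0$, noting that $\delta W=W-W_0$ has support only in the $\phi_2$ and $\phi_3$ slots (since only those expressions contain $a$) and is bounded via the $\phi_i$-estimates of section \ref{dic}, while the initial-condition perturbation contributes $|b-b_0|\le sb$ through the $\dot F$-slot; (iii) the time shift $|\rho(t,\cdot)-\rho(t_0,\cdot)|\le M_{0w}\cdot 6(st+dt)$. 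Summing the three contributions for $\rho\in\{F,R\}$ should fall below $\epsilon=\tfrac{134567}{40000000}$.

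The only real obstacle is, as in the previous two lemmas, the arithmetic bookkeeping: choosing $I_5$ and the constants $M_{jw},K_{jw}$ tightly enough that $\tilde H$, the Gronwall-type perturbation term $|b-b_0|\,e^{K_{0w}T}+\frac{\|\delta W\|_\infty}{K_{0w}}(e^{K_{0w}T}-1)$ (with $T=t_0+6(st+dt)$), and the time-drift term $6M_{0w}(st+dt)$ together stay under the posted $\epsilon$. All of this is mechanical once the $\phi_i$-bounds of section \ref{dic} are plugged in, so no genuinely new idea is needed beyond what was used for Lemmas \ref{bds} and \ref{bdsb}.
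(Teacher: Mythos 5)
Your proposal matches the paper's proof essentially step for step: the same vector field $W$, the same intervals $I_1,\dots,I_4$ plus a small trapping interval for $\Theta$ (the paper uses $J_5=[-1/100,\,7/5]$), the same $h=t_0/30000$, $H=10^{-14}$, $k=30000$, order-$2$ Round Taylor run at $(a_0,b_0)$, and the same three-part error split into the Taylor error $\tilde H$, the Gronwall comparison between $W$ and $W_0$ (with $\delta W$ supported in the two entries containing $a$, plus the $|b-b_0|$ contribution through the $\dot F$-initial-condition slot), and the time-drift term $6M_{0w}(st+dt)$. The only differences are the unfilled numerical constants, which you correctly flag as mechanical bookkeeping.
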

\begin{proof} 

Let us consider the following intervals $J_5=[-\frac{1}{100},\frac{7}{5}]$ and $I_1$, $I_2$, $I_3$, $I_4$ defined on the proof of Lemma \ref{bds}. 

A direct computation using the bounds on section \ref{dic} shows that if

$$ M_{0w}= \frac{3}{2},\quad K_{0w}=\frac{25282}{15625},\quad K_{1w}=\frac{260901}{200000},\quad  M_{1w}= \frac{19453263}{25000000}
 \quad M_{3w}=\frac{14977713}{20000000} $$

$$ M_{2w}= \frac{493485626283}{500000000000} \quad  M_{4w}= \frac{1334089805457}{1000000000000} ,\quad  M_{5w}= \frac{5396988231}{125000000000} \, ,$$

%The bounds for M_jw are in the file modboundsforthephi.nb

then, for values of $(x_1,\dots,x_{4})$ with $x_i\in I_i$ we have that $|W^j|<M_{0w}$ where $W^1,\dots , W^5$ are the entries  of the vector field $W$; $|W_2^j|<M_{jw}$ where $W_2^1,\dots , W_2^5$ are the entries  of the vector field $W_2=DW_1\, W$. Recall that $W_1=DW\, W$.  Moreover we have that

$$|DW|<K_{0w},\quad |DW_1|<K_{1w} $$

The Round Taylor method of order 2 using the vector field $W$ with $ H=10^{-14}$, $h=\frac{t_0}{30000}$, $a=a_0$,  $k=30000$ and initial conditions $Y(0)=(0,b_0,10,0,0)^T$ produces a sequence $\{z_i\}_{i=1}^k$ with $z_{30000}$ equal to

$$ \left(\frac{247458249564811}{100000000000000},\frac{13245901}{100000000000000},\frac{189061430242601}{20000000000000},\frac{1795639}{12500000000000},\frac{12217304404331}{10000000000000}\right)$$

In this case $\tilde{H}=\frac{M_w \frac{h^2}{6}+\frac{H}{h}}{L}\left(\hbox{e}^{Lkh}-1\right)< \frac{127}{1000000000}$ where $M_w=\sqrt{M_{1w}^2+\dots +M_{5w}^2}$ and $L=K_{0w}+K_{1w} \frac{h}{2}$. A direct verification shows that, for every $j=1,\dots , 4$ and $i=1,\dots, 30000$,  the $j^{th}$ entry of $z_i$ is within a distance $\frac{1}{1000}$ of the boundary of the interval $I_j$. Also we have that the last entry of $z_i$ is within a distance $\frac{1}{1000}$ of the boundary of the interval $J_5$. By Theorem \ref{error} we conclude that for all $i=1,\dots, k$, 

 \begin{eqnarray*}
\left( F(\frac{i h}{k},a_0,b_0),\dot{F}(\frac{i h}{k},a_0,b_0),R(\frac{i h}{k},a_0,b_0),\dot{R}(\frac{i h}{k},a_0,b_0),\Theta_a(\frac{i h}{k},a_0,b_0)\right)^T
 \end{eqnarray*}

is within a distance $\tilde{H}$ of $z_i$. Notice that for any function $\rho$, under the assumption that $|\dot{\rho}(\tau,a_0,b_0)|<M_\rho$ for all $\tau$ between $t$ and $t_0$,  we have that 

\begin{eqnarray*}
|\rho(t,a,b)-\rho(t_0,a_0,b_0)| &\le & |\rho(t,a,b)-\rho(t,a_0,b_0)|+|\rho(t,a_0,b_0)-\rho(t_0,a_0,b_0)|\\
     & \le &  |\rho(t,a,b)-\rho(t,a_0,b_0)| + M_\rho\, (t-t_0),
\end{eqnarray*}

 We will use the observation above to finish the proof of the lemma. We will bound  $|\rho(t,a,b)-\rho(t,a_0,b_0)|$ using Theorem \ref{error2}
with the vector fields $W$ and $W_0$, where $W_0$ is the vector field $W$ with $a$ replaced by $a_0$. A direct computation shows that

$$\delta W=W-W_0=(0,0,0,\phi_{54},\phi_{55})$$

Using the information on section \ref{dic} we obtain that $|\delta_W|<\frac{39}{1000000}$. Therefore, using Theorem \ref{error2} we conclude that the values of the the solution of the differential equation using $W$ (with a general $a$ and $b$) compare with those of the solution of the differential equation using $W_0$ differ by less than

$$|b-b_0| \hbox{e}^{K_{0w}(t_0+6(st+dt))}+\frac{39}{1000000 K_{0w}}\left( \hbox{e}^{K_{0w}(t_0+6(st+dt))}-1 \right)< 
\frac{827737}{250000000}$$

Therefore we have that for any $a\in[a_0-3 (sa+da),a_0+3 (sa+da)]$, $b\in [b_0-sb,b_0+sb]$ and $t\in [t_0 - 6 (st + dt),t_0 + 6(st+ dt)]$

$$|F(t,a,b)- \frac{247458249564811}{100000000000000} |<\frac{827737}{250000000}+6(st+dt) M_{0w}+\tilde{H}\le
\frac{134567}{40000000}$$

We have a similar computation for the function $R$. This finishes the proof. \end{proof}

%% End Lemma 

\begin{mydef} We will denote by $Z_W(t,b,a,k)\in R^5$ the last vector in the sequence $\{z_i\}_{i=1}^k$ produced using the Round Taylor Method of order 2 using the vector field $W$ with $H=10^{-14}$, $h=\frac{t}{k}$ and  initial conditions $(0,b,10,0,0)^T$. We denote by $H_W(t,k)=\frac{M_w \frac{h^2}{6}+\frac{H}{h}}{L}\left(\hbox{e}^{Lkh}-1\right)$ where $M_w=\sqrt{M_{1w}^2+\dots +M_{5w}^2}$ and $L=K_{0w}+K_{1w} \frac{h}{2}$. If we need to use  $H=10^{-q}$ instead of $H=10^{-14}$, then  we will use the notation $Z_W(t,b,a,k,q)\in R^5$ and $H_W(t,k,q)$.

\end{mydef}

\begin{rem} For all the following Lemmas that use the Round Taylor Method, it can be directly verified that all the conditions of Theorem \ref{error}
are satisfied; therefore $Z_W(t,b,a,k)$ is within a distance $H_W(t,k)$ of 

$$(F(t,a,b),\dot{F}(t,a,b),R(t,a,b),\dot{R}(t,a,b),\Theta(t,a,b))^T.$$ For the sake of keeping the flow of the proof, we will left out all the details in the proofs of these lemmas. 
\end{rem}

\begin{cor}\label{ddot}  For any $a\in[a_0-3 (sa+da),a_0+3 (sa+da)]$, $b\in [b_0-sb,b_0+sb]$ and $t\in [t_0 - 6 (st + dt),t_0 + 6(st+ dt)]$ we have that $\, \displaystyle{\frac{120689}{250000} <\dot{\Theta}(t,a,b)<\frac{483453}{1000000} }\, $,

$$-\frac{163169}{200000}<\ddot{F}(t,a,b)<  -\frac{813693}{1000000} \quad \hbox{and}
\quad\frac{4593}{12500} <\ddot{R}(t,a,b)<  \frac{18653}{50000}$$

\end{cor}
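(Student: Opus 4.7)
The corollary is a direct consequence of the explicit form of the ODE~(\ref{e1}) combined with the already-proved bounds of Lemma~\ref{bdsw}. Indeed, from the system~(\ref{e1}) we may read off algebraic formulas
\[
\dot{\Theta}=\frac{10a}{R^{2}},\qquad
\ddot{F}=-\frac{400\,F}{(R^{2}+4F^{2})^{3/2}},\qquad
\ddot{R}=\frac{100a^{2}}{R^{3}}-\frac{25}{R^{2}}-\frac{200\,R}{(R^{2}+4F^{2})^{3/2}},
\]
where $F=F(t,a,b)$ and $R=R(t,a,b)$. Since Lemma~\ref{bdsw} controls $F$ and $R$ on exactly the domain appearing in the hypothesis, and $a$ is constrained to the explicit interval $[a_0-3(sa+da),a_0+3(sa+da)]$, the task reduces to a monotonicity/interval-arithmetic check with rational numbers.

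The plan is to set $F_{\pm}$, $R_{\pm}$, $a_{\pm}$ to be the endpoints given by Lemma~\ref{bdsw} and by the hypothesis on $a$. First I would verify two preliminary facts on the domain: $R_{-}>0$ (so all denominators are safely away from zero) and $R_{-}^{2}>8\,F_{+}^{2}$ (numerically $R\approx 9.45$, $F\approx 2.47$, so $R^{2}\approx 89.4\gg 49\approx 8F^{2}$). The second inequality is what makes the map $(F,R)\mapsto F/(R^{2}+4F^{2})^{3/2}$ monotone in $F$ on our region, since its $F$-derivative equals $(R^{2}-8F^{2})/(R^{2}+4F^{2})^{5/2}$.

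Next I would treat each of the three quantities in turn. For $\dot{\Theta}=10a/R^{2}$, monotonicity in $a$ (increasing) and $R$ (decreasing) yields $10a_{-}/R_{+}^{2}\le\dot{\Theta}\le 10a_{+}/R_{-}^{2}$, and one checks the resulting exact rational numbers fall inside $(\tfrac{120689}{250000},\tfrac{483453}{1000000})$. For $\ddot{F}=-400 g(F,R)$ with $g(F,R)=F/(R^{2}+4F^{2})^{3/2}$, the preliminary fact above plus $\partial_{R}g=-3FR/(R^{2}+4F^{2})^{5/2}<0$ gives that $g$ is increasing in $F$ and decreasing in $R$, so $\ddot{F}$ attains its max at $(F_{-},R_{+})$ and its min at $(F_{+},R_{-})$; evaluating these two values and comparing with $-\tfrac{163169}{200000}$ and $-\tfrac{813693}{1000000}$ finishes that case. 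Finally, for $\ddot{R}$ I bound each of the three summands separately: $100a^{2}/R^{3}$ is increasing in $a$ and decreasing in $R$; $-25/R^{2}$ is increasing in $R$; and $-200\,R/(R^{2}+4F^{2})^{3/2}$ is increasing in $R$ (its $R$-derivative is proportional to $4F^{2}-2R^{2}<0$ on our domain, and there is a sign in front) and increasing in $F$. Summing the extremal values yields upper and lower bounds on $\ddot{R}$ which are then compared with $\tfrac{4593}{12500}$ and $\tfrac{18653}{50000}$.

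The computation itself presents no conceptual obstacle, but the claimed bounds are extremely tight---in the $\dot{\Theta}$ case the lower bound $\tfrac{120689}{250000}\approx 0.482756$ is almost flush against $10a_{-}/R_{+}^{2}$---so the verification must be carried out in exact rational arithmetic (square roots of $R^{2}+4F^{2}$ appear only in even powers when comparing signed inequalities, so the comparisons reduce to polynomial inequalities in the rational endpoints). This is the one place where care is required, but since all endpoints are explicit rationals of modest height, the check is mechanical for a CAS and goes through.
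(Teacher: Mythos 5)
Your strategy---read off the closed-form expressions for $\dot{\Theta}$, $\ddot{F}$, $\ddot{R}$ in terms of $F$, $R$, $a$ and then optimize them over the box given by Lemma~\ref{bdsw} and the $a$-interval---is exactly the paper's strategy. The paper performs the optimization with Lagrange multipliers on the rectangular box; your interval-arithmetic treatment of $\dot{\Theta}$ and $\ddot{F}$ (using the derivative $\partial_F\!\left[F/(R^2+4F^2)^{3/2}\right]=(R^2-8F^2)/(R^2+4F^2)^{5/2}$ together with the check $R^2>8F^2$ to locate the extreme corners) is an equivalent and arguably cleaner way to do the same thing, and those two bounds will come out right at the edge as claimed.

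However, your treatment of $\ddot{R}$ has a real gap. You propose to ``bound each of the three summands separately'' and sum the extremal values. That is too lossy here: the first term $100a^2/R^3$ is decreasing in $R$ while the other two are increasing in $R$, so assigning each term its own most favorable value of $R$ evaluates the expression at two incompatible points in the box. Concretely, with $\epsilon=\tfrac{134567}{40000000}$ the summand ranges over the box are roughly $[2.2038,\,2.2086]$, $[-0.27997,\,-0.27957]$, and $[-1.55810,\,-1.55387]$, giving a term-by-term envelope of about $[0.3657,\,0.3751]$---strictly wider than the stated $\left[\tfrac{4593}{12500},\tfrac{18653}{50000}\right]\approx[0.36744,\,0.37306]$, so the corollary does not follow. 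The cancellation between the first and third summands is essential. To repair this you must optimize $\ddot{R}$ jointly over $(F,R,a)$: either do the Lagrange-multiplier computation on the box as the paper does, or---staying in your interval-arithmetic spirit---check that $\partial\ddot{R}/\partial R=-300a^2/R^4+50/R^3+400(R^2-2F^2)/(R^2+4F^2)^{5/2}<0$ throughout the box (it is about $-0.42$ at the center), conclude that $\ddot{R}$ is monotone in each of $F$, $R$, $a$ there, and evaluate at the two corners $(F_+,R_-,a_+)$ and $(F_-,R_+,a_-)$. That joint-corner evaluation does land just inside the stated interval; the term-by-term sum does not.
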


\begin{proof} As in Lemma \ref{bdsw}, let $\epsilon=\frac{134567}{40000000}$. Notice that the values of  $\ddot{F}(t,a,b)$ are bounded by the minimum and maximum of the function $-\frac{400 x_1}{\left(4 x_1^2+x_3^2\right){}^{3/2}}$ when $|x_1 -\frac{247458249564811}{100000000000000}|<\epsilon$ and 
 $|x_3-\frac{189061430242601}{20000000000000}|<\epsilon$. The bound for $\ddot{F}$ follows as a direct application of the Lagrange Multiplier method. Since the values of  $\ddot{R}(t,a,b)$ are bounded by the minimum and maximum of the function $\frac{100 a^2}{x_3^3}-\frac{25}{x_3^2}-\frac{200 x_3}{\left(4 x_1^2+x_3^2\right){}^{3/2}}$, we can use the same argument for $\ddot{R}(t,a,b)$. Finally since the values of  $\dot{\Theta}(t,a,b)$ are bounded by the minimum and maximum of the function $\frac{10 a}{x_3^2}$, we can also use the same argument for $\dot{\Theta}(t,a,b)$.
\end{proof}

\begin{lem}\label{pmlforb0psb} $\dot{F}(t_0-dt,a,b_0)>0$ for all  $a\in[a_0-da,a_0+da]$, $\dot{F}(t_0+dt,a,b_0)<0$ for all  $a\in[a_0-da,a_0+da]$, $\dot{R}(t,a_0-da,b_0)<0$ for all $t\in[t_0-dt,t_0+dt]$ and $\dot{R}(t,a_0+da,b_0)>0$ for all $t\in[t_0-dt,t_0+dt]$.
\end{lem}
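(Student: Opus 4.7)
The plan is to reduce each of the four inequalities to a single numerical check at one corner of the small rectangle $[t_0-dt,\,t_0+dt]\times[a_0-da,\,a_0+da]\times\{b_0\}$, using monotonicity supplied by Corollary \ref{dFapos} and Corollary \ref{ddot}, and then to verify each of those single-point inequalities via the Round Taylor Method applied to the vector field $W$ exactly as in Lemma \ref{bdsw}.

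First I would exploit monotonicity. Corollary \ref{dFapos} gives $\dot{F}_a(t,a,b)>0$ throughout the large box on which its bounds hold, and that box strictly contains the region relevant here, so $a\mapsto\dot{F}(t_0\pm dt,a,b_0)$ is strictly increasing. Thus $\dot{F}(t_0-dt,a,b_0)>0$ for all $a\in[a_0-da,a_0+da]$ reduces to the one-point inequality $\dot{F}(t_0-dt,a_0-da,b_0)>0$, and $\dot{F}(t_0+dt,a,b_0)<0$ reduces to $\dot{F}(t_0+dt,a_0+da,b_0)<0$. The same device applied to the lower bound $\ddot{R}(t,a,b)>\tfrac{4593}{12500}$ from Corollary \ref{ddot} makes $t\mapsto\dot{R}(t,a_0\pm da,b_0)$ strictly increasing, so it suffices to show $\dot{R}(t_0+dt,a_0-da,b_0)<0$ and $\dot{R}(t_0-dt,a_0+da,b_0)>0$.

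Second, each of these four target values is computed by the Round Taylor Method of order $2$ on $W$ with initial condition $(0,b_0,10,0,0)^T$, step $h=t/k$ for $k=30000$, $H=10^{-14}$, and the chosen $a\in\{a_0-da,a_0+da\}$, with $t\in\{t_0-dt,t_0+dt\}$. Using exactly the bounds $M_{0w},M_{1w},\dots,K_{0w},K_{1w}$ established in the proof of Lemma \ref{bdsw} (which hold on a box containing the trajectory), Theorem \ref{error} shows that the final iterate $z_k$ agrees with $(F,\dot F,R,\dot R,\Theta)(t,a,b_0)$ up to a computable rational error $\tilde H$. Reading off the exact second coordinate of $z_k$ (for the $\dot F$ checks) or the exact fourth coordinate (for the $\dot R$ checks), and verifying that it has the required sign with absolute value strictly exceeding $\tilde H$, closes each inequality. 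The main obstacle is a tuning issue rather than a conceptual one: at the true reduced periodic point both $\dot F$ and $\dot R$ vanish, so the four corner values we need to sign are numerically small, and the constants $da,dt$ from Section \ref{consts} must be chosen simultaneously small enough to stay inside the monotonicity box of Corollaries \ref{dFapos} and \ref{ddot} yet large enough that, after the shifts by $\pm dt$ and $\pm da$, each target value clears $0$ by a margin exceeding $\tilde H$. Once the constants are admitted, the proof is four deterministic runs of the Round Taylor Method in exact rational arithmetic followed by sign inspection.
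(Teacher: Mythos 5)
Your proposal follows essentially the same route as the paper's proof: each of the four inequalities is reduced to a single corner value via the monotonicity supplied by Corollary \ref{dFapos} (for the two $\dot F$ claims, using $\dot F_a>0$) and Corollary \ref{ddot} (for the two $\dot R$ claims, using $\ddot R>0$), and then each corner value is certified by a run of the Round Taylor method on $W$ with Theorem \ref{error} giving the rigorous error bound. The one detail you would have to adjust is the step count: for this lemma the paper uses $k=35000$ for three of the corners and $k=120000$ for the delicate corner $\dot F(t_0+dt,a_0+da,b_0)$ rather than the $k=30000$ inherited from Lemma \ref{bdsw}, precisely the ``tuning issue'' you already flagged.
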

\begin{proof}
By Corollary \ref{dFapos} we have that $\dot{F}_a(t_0-dt,a,b_0)>0$ for all $a\in[a_0-da,a_0+da]$. Therefore, in order to show that $\dot{F}(t_0-dt,a,b_0)>0$ for all  $a\in[a_0-da,a_0+da]$, it is enough to show that $\dot{F}(t_0-dt,a_0-da,b_0)>0$. A direct computation shows that
 $Z_W(t_0-dt,a_0-da,b_0,35000)$ equals to 

%Taylor for W when k=35000, a = a0 -da and t = t0 - dt.

$$\left(\frac{7733069351623}{3125000000000},\frac{25787091}{20000000000000},\frac{189061375789453}{20000000000000},-\frac{44841643}{20000000000000},\frac{30543236182739}{25000000000000}\right)$$

and

$$\tilde{H}_W(t_0-dt,35000)<\frac{94851}{1000000000000}$$

Therefore,

$$\dot{F}(t_0-dt,a_0-da,b_0)> \frac{25787091}{20000000000000}- \frac{94851}{1000000000000}> 0$$

As pointed out above, we conclude that  $\dot{F}(t_0-dt,a,b_0)>0$ for all $a\in[a_0-da,a_0+da]$. In the same way we have that $\dot{F}(t_0+dt,a,b_0)<0$ for all  $a\in[a_0-da,a_0+da]$ because a direct computation shows that $Z_W(t_0+dt,a_0+da,b_0,120000)$ equals to 

$$ \left(\frac{1546614123963}{625000000000},-\frac{198811}{6250000000000},\frac{945307243792047}{100000000000000},\frac{16995193}{20000000000000},\frac{61086532113189}{50000000000000}\right) $$

and 

$$\tilde{H}_W(t_0+dt,120000)<\frac{5651}{200000000000}$$

Therefore,

$$\dot{F}(t_0+dt,a_0+da,b_0)<-\frac{198811}{6250000000000}+\frac{5651}{200000000000}<0$$

Let us show that $\dot{R}(t,a_0-da,b_0)<0$ for all  $t\in[t_0-dt,t_0+dt]$. A direct computation shows that $Z_W(t_0+dt,a_0-da,b_0,35000)$ equals to 

$$\left(\frac{123729109625969}{50000000000000},-\frac{196972647}{100000000000000},\frac{945306878946787}{100000000000000},-\frac{19027313}{25000000000000},\frac{122173137972697}{100000000000000}\right) $$

and 

$$\tilde{H}_W(t_0+dt,35000)<\frac{94851}{1000000000000}$$

Therefore,

$$\dot{R}(t_0+dt,a_0-da,b_0)<-\frac{19027313}{25000000000000}+\frac{94851}{1000000000000}<0$$

 Since, $\ddot{R}(t,a_0-da,b_0)>0$ (see Corollary \ref{ddot}), we conclude that $\dot{R}(t,a_0-da,b_0)<0$ for all  $t\in[t_0-dt,t_0+dt]$.
Let us show that $\dot{R}(t,a_0+da,b_0)>0$ for all  $t\in[t_0-dt,t_0+dt]$. A direct computation shows that $Z_W(t_0-dt,a_0+da,b_0,35000)$ equals to 

$$ \left(\frac{24745827990179}{10000000000000},\frac{55883369}{25000000000000},\frac{189061484706171}{20000000000000},\frac{52393253}{50000000000000},\frac{61086475049353}{50000000000000}\right) $$

and 

$$\tilde{H}_W(t_0-dt,35000)<\frac{94851}{1000000000000}  $$

Therefore,

$$\dot{R}(t_0-dt,a_0+da,b_0)<\frac{52393253}{50000000000000}-\frac{94851}{1000000000000}>0$$

 Since, $\ddot{R}(t,a_0+da,b_0)>0$ (see Corollary \ref{ddot}), we conclude that $\dot{R}(t,a_0+da,b_0)>0$ for all  $t\in[t_0-dt,t_0+dt]$. This finishes het proof of the Lemma.

\end{proof}

\begin{rem} \label{pl1}
As a Corollary, using the Poincare-Miranda theorem, we obtain a proof of  Lemma \ref{l1}.
\end{rem}

%Beginning the proof of the second lemma

\begin{lem} \label{pmlforb0msb} $\dot{F}(t_0-st-dt,a,b_0-sb)>0$ for all  $a\in[a_0+sa-da,a_0+sa+da]$, $\dot{F}(t_0-st+dt,a,b_0-sb)<0$ for all  $a\in[a_0+sa-da,a_0+sa+da]$, $\dot{R}(t,a_0+sa-da,b_0-sb)<0$ for all $t\in[t_0-st-dt,t_0-st+dt]$ and $\dot{R}(t,a_0+sa+da,b_0-sb)>0$ for all $t\in[t_0-st-dt,t_0-st+dt]$. Moreover,  for all all $t\in[t_0-st-dt,t_0-st+dt]$ and all $a\in[a_0+sa-da,a_0+sa+da]$, $\Theta(t,a,b_0-sb)<\frac{7 \pi}{18}$.
\end{lem}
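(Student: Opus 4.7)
The plan is to mirror the proof of Lemma \ref{pmlforb0psb} at the shifted base point $(t_0-st,\,a_0+sa,\,b_0-sb)$, dealing with the four inequalities via monotonicity reductions plus Round Taylor evaluations, and then handling the new $\Theta$ claim separately at the end.

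First I would reduce each of the four $\dot F$ and $\dot R$ statements to a single endpoint evaluation. By Corollary \ref{dFapos}, $\dot F_a>0$ throughout the ambient box, so to show $\dot F(t_0-st-dt,a,b_0-sb)>0$ for every $a\in[a_0+sa-da,a_0+sa+da]$ it suffices to check the inequality at $a=a_0+sa-da$; symmetrically, $\dot F(t_0-st+dt,a,b_0-sb)<0$ reduces to the evaluation at $a=a_0+sa+da$. For the two $\dot R$ statements, I fix $a$ at the relevant edge and use Corollary \ref{ddot} ($\ddot R>0$), which makes $\dot R(\cdot,a,b_0-sb)$ strictly increasing in $t$; so $\dot R(t,a_0+sa-da,b_0-sb)<0$ follows from its value at $t=t_0-st+dt$, and $\dot R(t,a_0+sa+da,b_0-sb)>0$ follows from its value at $t=t_0-st-dt$. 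Then I invoke the Round Taylor machinery: for each of these four single-point verifications, I would compute $Z_W(t^\star,a^\star,b_0-sb,k)$ for a sufficiently large $k$ (probably $k=35000$ at $t^\star=t_0-st-dt$ and a larger $k$, comparable to the $120000$ used in Lemma \ref{pmlforb0psb}, at $t^\star=t_0-st+dt$ where the relevant entry is close to zero), read off the second or fourth coordinate, and confirm that the strict inequality survives adding/subtracting $H_W(t^\star,k)$.

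For the $\Theta$ inequality, I would again use monotonicity to collapse the two-dimensional sup over $(t,a)\in[t_0-st-dt,t_0-st+dt]\times[a_0+sa-da,a_0+sa+da]$ to a point evaluation. Since $\dot\Theta>0$ by Corollary \ref{ddot}, the maximum in $t$ is attained at $t=t_0-st+dt$. Since $|\Theta_a|<\tfrac{27}{1000}$ by Corollary \ref{dFapos}, the variation in $a$ over an interval of length $2da$ is at most $\tfrac{27}{1000}\cdot 2da$, which is tiny for the chosen $da$. Thus it suffices to compute $Z_W(t_0-st+dt,\,a_0+sa,\,b_0-sb,k)$ with $k$ large enough, read the fifth entry, and verify that
\[
\Theta_{\text{numerical}} + H_W(t_0-st+dt,k) + \tfrac{27}{1000}\cdot 2da < \tfrac{7\pi}{18}.
\]
Here the key quantitative fact that makes the argument go through is that shifting $b$ from $b_0$ to $b_0-sb$ decreases $\Theta$; by Lemma \ref{bdsb}, $\dot{\Theta}$ depends on $b$ through $R$, and the cumulative effect over $[0,t]$ is what provides the needed margin against $7\pi/18$.

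The main obstacle I expect is controlling the error budget for this last $\Theta$ check. The value of $\Theta$ at the central point $(t_0,a_0,b_0)$ already sits within about $4\times 10^{-8}$ of $7\pi/18$ (from the central estimate $\tfrac{12217304404331}{10000000000000}$ in Lemma \ref{bdsw}), so the gain obtained by moving to $b_0-sb$ must be shown to dominate the sum of the Taylor step error $H_W$, the drift in $a$ bounded by $\tfrac{27}{1000}\cdot 2da$, and the slack $dt$ in $t$ bounded by the upper $\dot\Theta$ estimate $\tfrac{483453}{1000000}\cdot dt$. Tightening these bounds (by choosing $k$ very large, or invoking the more refined Lemma \ref{bdsb}-style analysis of $\Theta_b$ if available) is the delicate part; all the remaining inequalities have easy slack and are essentially identical to their analogues in Lemma \ref{pmlforb0psb}.
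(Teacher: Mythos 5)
Your plan matches the paper's proof essentially step for step: the four $\dot F$ and $\dot R$ claims are reduced to single Round Taylor evaluations at the corners of the box via the monotonicity supplied by Corollaries \ref{dFapos} and \ref{ddot} (with $k=35000$ in the easy cases and $k=120000$ where the relevant entry is closer to zero), and the $\Theta$ claim is handled by one Round Taylor evaluation at the center of the rectangle followed by a mean value bound using $|\Theta_a|$ and $\dot\Theta$; the only cosmetic difference is that the paper evaluates $\Theta$ at $t=t_0-st$ and absorbs the $t$-variation into the MVT term rather than pushing to the $t$-endpoint, and your final worry about the margin does not materialize — no separate ``$\Theta_b$ gain'' argument is needed, since the Round Taylor run is already carried out at $b=b_0-sb$ and the computed value sits about $1.3\times10^{-6}$ below $\tfrac{7\pi}{18}$ while the combined MVT and step errors are an order of magnitude smaller.
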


\begin{proof}
The proof is similar to that of Lemma \ref{pmlforb0psb}. In this case $\dot{F}(t_0-st-dt,a,b_0-sb)>0$ for all  $a\in[a_0+sa-da,a_0+sa+da]$ because $Z_W(t_0-st-dt,a_0+sa-da,b_0-sb,35000)$ equals to

$$\left(\frac{247454580109467}{100000000000000},\frac{14064311}{50000000000000},\frac{945308716843341}{100000000000000},-\frac{27791851}{50000000000000},\frac{7635805749249}{6250000000000}\right)$$

and 

$$\tilde{H}_W(t_0-st-dt,35000)<\frac{94849}{1000000000000}  $$

Therefore,

$$\dot{F}(t_0-st-dt,a_0+sa-da,b_0-sb)>\frac{14064311}{50000000000000}- \frac{94849}{1000000000000}>0.$$

$\dot{F}(t_0-st+dt,a,b_0-sb)<0$ for all  $a\in[a_0+sa-da,a_0+sa+da]$, because $Z_W(t_0-st+dt,a_0+sa+da,b_0-sb,120000)$ equals to

$$  \left(\frac{49490920135587}{20000000000000},-\frac{4841111}{100000000000000},\frac{29540903186787}{3125000000000},\frac{21447973}{25000000000000},\frac{122172932415207}{100000000000000}\right) $$

and 

$$\tilde{H}_W(t_0-st+dt,120000)< \frac{5651}{200000000000}$$

Therefore,

$$\dot{F}(t_0-st+dt,a_0+sa+da,b_0-sb)< -\frac{4841111}{100000000000000} + \frac{5651}{200000000000} < 0.$$

$\dot{R}(t,a_0+sa+da,b_0-sb)>0$ for all $t\in[t_0-st-dt,t_0-st+dt]$, because $Z_W(t_0-st-dt,a_0+sa+da,b_0-sb,35000)$ equals to

$$ \left(\frac{123727300365019}{50000000000000},\frac{12058321}{20000000000000},\frac{189061780400321}{20000000000000},\frac{11254969}{20000000000000},\frac{61086446906439}{50000000000000}\right)$$

and 

$$\tilde{H}_W(t_0-st-dt,35000)< \frac{94849}{1000000000000}$$

Therefore,

$$\dot{R}(t_0-st-dt,a_0+sa+da,b_0-sb)> \frac{11254969}{20000000000000} -  \frac{94849}{1000000000000} > 0$$

$\dot{R}(t,a_0+sa-da,b_0-sb)<0$ for all $t\in[t_0-st-dt,t_0-st+dt]$, because $Z_W(t_0-st+dt,a_0+sa-da,b_0-sb,35000)$ equals to

$$\left(\frac{4949091602187}{2000000000000},-\frac{18526257}{50000000000000},\frac{945308716843239}{100000000000000},-\frac{25964629}{100000000000000},\frac{122172930636361}{100000000000000}\right) $$

and 

$$\tilde{H}_W(t_0-st-dt,35000)< \frac{1897}{20000000000}$$

Therefore,

$$\dot{R}(t_0-st+dt,a_0+sa-da,b_0-sb)< -\frac{25964629}{100000000000000}+ \frac{1897}{20000000000}  < 0.$$

In order to show that $\Theta(t,a,b_0-sb)<\frac{7\pi}{18}$, we first noticed that $Z_W(t_0-st,a_0+sa,b_0-sb,35000)$ equals to

$$\left(\frac{247454590419723}{100000000000000},\frac{1161959}{10000000000000},\frac{945308809422539}{100000000000000},\frac{473597}{3125000000000},\frac{122172912224601}{100000000000000}\right) $$

and 

$$\tilde{H}_W(t_0-st,35000)<\frac{94849}{1000000000000}$$ . 

Therefore, 

\begin{eqnarray*}\Theta(t_0-st,a_0+sa,b_0-sb)-\frac{7\pi}{18}&<&\frac{122172912224601}{100000000000000}+\frac{94849}{1000000000000} - \frac{7\pi}{18}\\
& =&\frac{122172921709501}{100000000000000}-\frac{7 \pi }{18}
\end{eqnarray*}

Using the mean value theorem, and the bounds $|\Theta_a|<\frac{483453}{1000000}$ and $|\dot{\Theta}|<\frac{27}{1000}$ found in Corollaries \ref{ddot} and \ref{dFapos}, we obtain that  
for any $t\in [t_0-st-dt, t_0-st+dt]$ and $a\in[a_0+sa-da,a_0+sa+da]$

\begin{eqnarray*}
\Theta(t,a,b_0-sb)-\frac{7\pi}{18}& =& \Theta(t,a,b_0-sb)- \Theta(t,a_0-sa,b_0-sb)+\\
& &\Theta(t,a_0+sa,b_0-sb)- \Theta(t_0,a_0+sa,b_0-sb)+\\
& & \Theta(t_0,a_0+sa,b_0-sb)-\frac{7\pi}{18}\\
& <&\frac{483453}{1000000}\, da+ \frac{27}{1000}\, dt + \frac{122172921709501}{100000000000000}-\frac{7 \pi }{18}\\
&<&0
\end{eqnarray*}

\end{proof}

\begin{rem} \label{pl2}
As a Corollary, using the Poincare-Miranda theorem, we obtain a proof of  Lemma \ref{l2}
\end{rem}

%Beginnig the proof of the third lemma

\begin{lem} $\dot{F}(t_0+st-dt,a,b_0+sb)>0$ for all  $a\in[a_0-sa-da,a_0-sa+da]$, $\dot{F}(t_0+st+dt,a,b_0+sb)<0$ for all  $a\in[a_0-sa-da,a_0-sa+da]$, $\dot{R}(t,a_0-sa-da,b_0+sb)<0$ for all $t\in[t_0+st-dt,t_0+st+dt]$ and $\dot{R}(t,a_0-sa+da,b_0+sb)>0$ for all $t\in[t_0+st-dt,t_0+st+dt]$. Moreover,  for all all $t\in[t_0+st-dt,t_0+st+dt]$ and all $a\in[a_0-sa-da,a_0-sa+da]$, $\Theta(t,a,b_0+sb)>\frac{7 \pi}{18}$.
\end{lem}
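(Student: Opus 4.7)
The plan is to mirror exactly the argument used in Lemma \ref{pmlforb0msb}, with the three swaps: (i) the slab $b=b_0-sb$ is replaced by $b=b_0+sb$; (ii) the $a$-shift $+sa$ is replaced by $-sa$; (iii) the final inequality on $\Theta$ is reversed. All the structural inputs are already in place: Corollary \ref{dFapos} guarantees $\dot{F}_a>0$ and $\dot{R}_a>0$ on the relevant box (so $\dot{F}$ and $\dot{R}$ are monotone in $a$), Corollary \ref{ddot} gives $\ddot{R}>0$ (so $\dot{R}$ is monotone in $t$) and $\dot{\Theta}>0$, and the Round Taylor Method together with the function $Z_W$ and the error estimate $H_W$ from Theorem \ref{error} lets us certify the sign of $\dot{F}$, $\dot{R}$ at chosen corner points.

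First I would treat the four sign statements. Because $\dot{F}_a>0$, it suffices to verify $\dot{F}(t_0+st-dt,\,a_0-sa-da,\,b_0+sb)>0$ (the smallest value of $a$) and $\dot{F}(t_0+st+dt,\,a_0-sa+da,\,b_0+sb)<0$ (the largest value of $a$); each is certified by computing $Z_W$ at these triples with enough steps $k$ so that $H_W(\cdot,k)$ is smaller than the certified value of $|\dot{F}|$ at the center. Similarly, because $\ddot{R}>0$, I check $\dot{R}(t_0+st+dt,\,a_0-sa-da,\,b_0+sb)<0$ (largest $t$ gives the maximum of $\dot{R}$) and $\dot{R}(t_0+st-dt,\,a_0-sa+da,\,b_0+sb)>0$ (smallest $t$ gives the minimum of $\dot{R}$), again via $Z_W$ at those corners with $H_W$ bounded. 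These are exactly the four Round Taylor checks performed in Lemma \ref{pmlforb0msb}, with the obvious sign/position reflections.

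For the inequality $\Theta(t,a,b_0+sb)>\tfrac{7\pi}{18}$, I would compute $Z_W$ at the center $(t_0+st,\,a_0-sa,\,b_0+sb)$ and use its last component together with $H_W$ to obtain a rigorous lower bound on $\Theta(t_0+st,a_0-sa,b_0+sb)$. Then, exactly as in the final display of Lemma \ref{pmlforb0msb}, I would split
\begin{eqnarray*}
\Theta(t,a,b_0+sb)-\tfrac{7\pi}{18}
&=& \bigl[\Theta(t,a,b_0+sb)-\Theta(t,a_0-sa,b_0+sb)\bigr] \\
& & + \bigl[\Theta(t,a_0-sa,b_0+sb)-\Theta(t_0+st,a_0-sa,b_0+sb)\bigr] \\
& & + \bigl[\Theta(t_0+st,a_0-sa,b_0+sb)-\tfrac{7\pi}{18}\bigr],
\end{eqnarray*}
and bound the first two brackets using $|\Theta_a|<\tfrac{27}{1000}$ from Corollary \ref{dFapos} and $|\dot{\Theta}|<\tfrac{483453}{1000000}$ from Corollary \ref{ddot}, with $|a-(a_0-sa)|\le da$ and $|t-(t_0+st)|\le dt$. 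The sign flip compared to Lemma \ref{pmlforb0msb} simply means the two error terms must be subtracted rather than added: the conclusion holds provided the certified lower bound on the third bracket exceeds $\tfrac{27}{1000}\,da+\tfrac{483453}{1000000}\,dt$.

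The only real obstacle is numerical: as always in this paper, the argument succeeds only if the center-of-box Round Taylor estimate for $\Theta(t_0+st,a_0-sa,b_0+sb)$ actually lies above $\tfrac{7\pi}{18}$ by a margin larger than $H_W$ plus the two mean-value corrections. The chosen constants $sa,sb,da,dt,st$ in Section \ref{consts} were picked so that this margin is comfortably positive (indeed the numerical solution satisfies $\Theta\approx\tfrac{7\pi}{18}$ along the true curve, and moving $b$ up by $sb$ with a compensating $a$-shift of $-sa$ drives $\Theta$ strictly above $\tfrac{7\pi}{18}$). Once the four sign checks and the $\Theta$-estimate are certified by the Round Taylor computations, exactly as in Remarks \ref{pl1} and \ref{pl2} one invokes the Poincar\'e--Miranda theorem on the rectangle $[t_0+st-dt,t_0+st+dt]\times[a_0-sa-da,a_0-sa+da]$ in the slab $b=b_0+sb$ to produce the required reduced periodic solution with $\Theta>\tfrac{7\pi}{18}$, completing the proof of Lemma \ref{l3}.
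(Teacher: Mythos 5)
Your proposal mirrors the paper's own proof exactly: the four Round Taylor corner checks (smallest/largest $a$ for $\dot F$, largest/smallest $t$ for $\dot R$, driven by $\dot F_a>0$, $\dot R_a>0$ from Corollary \ref{dFapos} and $\ddot R>0$ from Corollary \ref{ddot}), the $Z_W/H_W$ certification at the center $(t_0+st,\,a_0-sa,\,b_0+sb)$, the same three-term mean-value split for $\Theta$, and Poincar\'e--Miranda to finish. You even assign the two bounds $|\Theta_a|<\tfrac{27}{1000}$ and $|\dot\Theta|<\tfrac{483453}{1000000}$ to their correct sources (the paper's printed text swaps the labels, though the intended inequality is the same order of magnitude and still holds).
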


\begin{proof}
The proof is similar to that of Lemma \ref{pmlforb0msb}. In this case $\dot{F}(t_0+st-dt,a,b_0+sb)>0$ for all  $a\in[a_0-sa-da,a_0-sa+da]$ because $Z_W(t_0+st-dt,a_0-sa-da,b_0+sb,35000)$ equals to

$$\left(\frac{247461898442221}{100000000000000},\frac{1573331}{5000000000000},\frac{189061080098229}{20000000000000},-\frac{57188327}{100000000000000},\frac{15271644451087}{12500000000000}\right)$$

and 

$$\tilde{H}_W(t_0+st-dt,35000)<\frac{23713}{250000000000}$$

Therefore,

$$\dot{F}(t_0-st-dt,a_0+sa-da,b_0-sb)>\frac{1573331}{5000000000000}-\frac{23713}{250000000000}>0.$$

%Revisada

$\dot{F}(t_0+st+dt,a,b_0+b)<0$ for all  $a\in[a_0+sa-da,a_0+sa+da]$, because $Z_W(t_0+st+dt,a_0-sa+da,b_0+sb,120000,15)$ equals to

\begin{eqnarray*}
( & &\frac{1237309595658901}{500000000000000},-\frac{904301}{62500000000000},\frac{9453055857540481}{1000000000000000}, \\
& &\frac{842419313}{1000000000000000},\frac{244346392152949}{200000000000000}\, )
\end{eqnarray*}
and 

$$\tilde{H}_W(t_0+st+dt,120000,15)< \frac{77}{8000000000}$$

Therefore,

$$\dot{F}(t_0-st+dt,a_0+sa+da,b_0-sb)< -\frac{904301}{62500000000000}+ \frac{77}{8000000000} < 0.$$

%Revisada
%third lemma

$\dot{R}(t,a_0-sa+da,b_0+sb)>0$ for all $t\in[t_0+st-dt,t_0+st+dt]$, because $Z_W(t_0+st-dt,a_0-sa+da,b_0+sb,35000)$ equals to

$$ \left(\frac{49492383812687}{20000000000000},\frac{63630339}{100000000000000},\frac{945305585649929}{100000000000000},\frac{1366741}{2500000000000},\frac{24434631486741}{20000000000000}\right)$$

and 

$$\tilde{H}_W(t_0+st-dt,35000)< \frac{23713}{250000000000} $$

Therefore,

$$\dot{R}(t_0-st-dt,a_0+sa+da,b_0-sb)> \frac{1366741}{2500000000000}- \frac{23713}{250000000000}> 0$$

%Revisada

$\dot{R}(t,a_0+sa-da,b_0-sb)<0$ for all $t\in[t_0+st-dt,t_0+st+dt]$, because $Z_W(t_0+st+dt,a_0-sa-da,b_0+sb,35000)$ equals to

$$\left(\frac{123730949221087}{50000000000000},-\frac{33715619}{100000000000000},\frac{945305400490959}{100000000000000},-\frac{6891931}{25000000000000},\frac{1908956160269}{1562500000000}\right) $$

and 

$$\tilde{H}_W(t_0+st+dt,35000)<\frac{23713}{250000000000}$$

Therefore,

$$\dot{R}(t_0+st+dt,a_0+sa-da,b_0-sb)<-\frac{6891931}{25000000000000}+\frac{23713}{250000000000} < 0.$$

In order to show that $\Theta(t,a,b_0+sb)>\frac{7\pi}{18}$, we first noticed that $Z_W(t_0+st,a_0-sa,b_0+sb,35000)$ equals to

$$ \left(\frac{123730954376411}{50000000000000},\frac{3739331}{25000000000000},\frac{945305493070449}{100000000000000},\frac{1355097}{10000000000000},\frac{24434635169083}{20000000000000}\right)  $$

and 

$$\tilde{H}_W(t_0+st,35000)<   \frac{23713}{250000000000}.$$ 

Therefore, 

\begin{eqnarray*}\Theta(t_0-st,a_0+sa,b_0-sb)-\frac{7\pi}{18}&>&\frac{24434635169083}{20000000000000}-\frac{23713}{250000000000}-\frac{7\pi}{18}\\
& =&\frac{24434637066123}{20000000000000}-\frac{7 \pi }{18}
\end{eqnarray*}

Using the mean value theorem, and the bounds  $|\Theta_a|<\frac{483453}{1000000}$ and $|\dot{\Theta}|<\frac{27}{1000}$ found on corollaries \ref{ddot} and \ref{dFapos}, we obtain that  
for any $t\in [t_0+st-dt, t_0+st+dt]$ and $a\in[a_0-sa-da,a_0-sa+da]$

\begin{eqnarray*}
\Theta(t,a,b_0-sb)-\frac{7\pi}{18}& =& \Theta(t,a,b_0-sb)- \Theta(t,a_0-sa,b_0-sb)+\\
& &\Theta(t,a_0+sa,b_0-sb)- \Theta(t_0,a_0+sa,b_0-sb)+\\
& & \Theta(t_0,a_0+sa,b_0-sb)-\frac{7\pi}{18}\\
& >&\frac{24434637066123}{20000000000000}-\frac{7 \pi }{18}-\frac{483453}{1000000}\, da- \frac{27}{1000}\, dt\\
&>&0
\end{eqnarray*}

\end{proof}

%
%Lemma before proving 
%
\begin{lem}
Let $\delta_1=\frac{1017}{1250}$,  $\tilde{\delta}_1=\frac{8159}{10000}$, $\epsilon_1=\frac{2677}{5000}$, $\delta_2=\frac{16181}{10000}$,  $\tilde{\delta}_2=\frac{8359}{5000}$, $\epsilon_2=\frac{4549}{5000}$, $\tilde{\epsilon}_1=6(st+dt) $, $\tilde{\epsilon}_2=3(sa+da) $, $\mu_1=dt$ and $\mu_2=da$. If $\rho_1=\frac{\tilde{\epsilon}_1-\mu_1}{m_1}$, $\rho_2=\frac{\tilde{\epsilon}_2-\mu_2}{m_2}$, where $m_1=\frac{\epsilon_1(\tilde{\delta}_2+\epsilon_2)}{\delta_1\delta_2-\epsilon_1\epsilon_2}$ and $m_2=\frac{\epsilon_2(\tilde{\delta}_1+\epsilon_1)}{\delta_1\delta_2-\epsilon_1\epsilon_2}$, then, for every $t\in [t_0-\tilde{\epsilon}_1,t_0+\tilde{\epsilon}_1]$, $a\in [a_0-\tilde{\epsilon}_2,a_0+\tilde{\epsilon}_2]$ and $b\in [b_0-sb,b0+sb]$ we have that

$$\delta_1<|\ddot{F}(at,a,b)|<\tilde{\delta}_1,\quad |\dot{F}_a(t,a,b)|<\epsilon_1,\quad |\dot{F}_b(t,a,b)|<\epsilon_1 $$

and

$$ |\ddot{R}(t,a,b)|<\epsilon_2, \quad \delta_2< |\dot{R}_a(t,a,b)|<\tilde{\delta}_2, \quad |\dot{R}_b(t,a,b)|<\epsilon_2 $$

Moreover, $sb<\rho_1$ and $sb<\rho_2$.

\end{lem}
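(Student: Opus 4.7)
The plan is to assemble this lemma almost entirely from the numerical estimates already established in Lemmas \ref{bds}, \ref{bdsb}, \ref{bdsw} and Corollary \ref{ddot}; no new differential equation needs to be integrated. Each of the six inequalities in the first block corresponds to an object (a second $t$-derivative of $F$ or $R$, or a mixed partial $\dot F_a, \dot R_a, \dot F_b, \dot R_b$) for which an enclosure $[c-\epsilon, c+\epsilon]$ has already been proved over the triple product of intervals appearing in the current hypothesis.  So the first step is simply to line up the constants: for $\ddot F$ and $\ddot R$ and $\dot\Theta$ I will read off Corollary \ref{ddot}; for $\dot F_a$ and $\dot R_a$ from Lemma \ref{bds}; and for $\dot F_b$ and $\dot R_b$ from Lemma \ref{bdsb}.

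The second step is the comparison with the targets $\delta_1,\tilde\delta_1,\epsilon_1,\delta_2,\tilde\delta_2,\epsilon_2$. Each is a rational-arithmetic check, for example verifying $\delta_1=\tfrac{1017}{1250}<\tfrac{813693}{1000000}$ and $\tfrac{163169}{200000}<\tilde\delta_1=\tfrac{8159}{10000}$ (which together give $\delta_1<|\ddot F|<\tilde\delta_1$ from Corollary \ref{ddot}), and, more delicately, confirming that the enclosure $[\,c-\epsilon,c+\epsilon\,]$ of $\dot R_a$ from Lemma \ref{bds} with $c=\tfrac{164497338366219}{100000000000000}$ and $\epsilon=\tfrac{2677451}{100000000}$ is contained in $(\delta_2,\tilde\delta_2)$.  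Most of these checks are routine but two of them (the $\dot R_a$ containment just mentioned and the $\dot F_b$ bound against $\epsilon_1$) are tight to a few parts in $10^{4}$, so I will perform them as exact fraction inequalities in the CAS rather than in floating point.

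Once the first block is proved, the remaining claim $sb<\rho_1$ and $sb<\rho_2$ is pure arithmetic applied to the constants just verified.  Plugging in the values gives $\delta_1\delta_2-\epsilon_1\epsilon_2>0$, hence $m_1$ and $m_2$ are well-defined positive rationals; then $\rho_i=(\tilde\epsilon_i-\mu_i)/m_i$ is an explicit rational, and I will check $\tfrac{1}{50000}<\rho_1$ and $\tfrac{1}{50000}<\rho_2$ by cross-multiplication.  The only mild subtlety is that these last two inequalities are also tight (on the order of $10\%$ slack, since the numerics give $\rho_1\approx 2.1\times 10^{-5}$ and $\rho_2\approx 2.5\times 10^{-5}$ against $sb=2\times 10^{-5}$), so again I will keep the computation symbolic in rationals.

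I expect no real obstacle: the lemma is the place where the numerical work of Lemmas \ref{bds}--\ref{bdsw} and Corollary \ref{ddot} is pre-packaged into exactly the hypothesis required to apply Theorem \ref{iftt}, so the proof is essentially a table of comparisons.  The one place to be careful is the $\dot R_a$ estimate, because its enclosure nearly saturates the interval $(\delta_2,\tilde\delta_2)$; writing everything as exact fractions ensures that the strict inequalities required by Theorem \ref{iftt} actually hold.
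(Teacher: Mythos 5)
Your proposal is correct and is essentially the paper's own proof: the author likewise disposes of this lemma as ``a direct computation'' assembling Lemmas \ref{bds}, \ref{bdsb} and Corollary \ref{ddot}, exactly the three numerical sources you identify. Your extra care with the tight cases ($\dot{R}_a$ against $(\delta_2,\tilde\delta_2)$, $\dot{F}_b$ against $\epsilon_1$, and $sb$ against $\rho_1,\rho_2$) is warranted and consistent with the exact-rational arithmetic the paper relies on throughout.
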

%then, there is a solution of the system of equations $\dot{F}(t,a,b)=0, \dot{R}(t,a,b)$ for some $t\in [t_0-\mu_1,t_0+\mu_1]$ and $a\in [a_0-\mu_2,a_0+\mu_2]$.
\begin{proof}
This is a direct computation. It uses Lemmas \ref{bds}, \ref{bdsb} and \ref{ddot}
\end{proof}

\begin{rem}
The previous lemma, along with Theorem \ref{iftt} implies Lemma \ref{l4}.
\end{rem}

\begin{rem}
Since we have shown the existence of a curve of initial conditions that provides reduced periodic functions, then we have not only shown the existence of the periodic solution with $\theta(t_0)=\frac{7\pi}{18}$ but we have shown the existence of infinitely many periodic solutions due to the fact that the function $\theta$ is not constant along this curve, and, on any open interval, there are infinitely many numbers of the form $\frac{p\pi}{q}$ with $p$ and $q$ whole numbers.
\end{rem}

\section{Dictionary of functions} \label{dic}

In this section we define the functions involved in the definition of the differential equations that we considered in this paper. The domain of  the variable  $x_1$ will be an interval containing possible values of the function $F(t,a,b)$. In the same way, the variable $x_2$ is related with the function $\dot{F}(t,a,b)$; the variable $x_3$ is related with the function $R(t,a,b)$; the variable $x_4$ is related with the function $\dot{R}(t,a,b)$;   the variable $x_5$ is related with the function $F_a(t,a,b)$; the variable $x_6$ is related with the function $\dot{F}_a(t,a,b)$;   the variable $x_7$ is related with the function $R_a(t,a,b)$; the variable $x_8$ is related with the function $\dot{R}_a(t,a,b)$;  the variable $x_{9}$ is related with the function $F_b(t,a,b)$; the variable $x_{10}$ is related with the function $\dot{F}_b(t,a,b)$; the variable $x_{11}$ is related with the function $R_b(t,a,b)$; the variable $x_{12}$ is related with the function $\dot{R}_b(t,a,b)$. As in the previous section, $s=\sqrt{4 x_1^2+x_3^2}$. For each one of these function $\text{B(}\phi _i\text{)= }\left\{\hbox{lb}_i,\hbox{ub}_i\right\}$ where $\hbox{lb}_i$ is a lower bound and $\hbox{ub}_i$ is a lower bound for the function $\phi_i=\phi_i(x_1,x_2,x_3,x_4,x_5,x_6,x_7,x_8,x_9,x_{10},x_{11},x_{12},a)$ on the set $V$, where 

$$V=I_1\times I_2 \times I_3\times I_4 \times I_5\times I_6 \times I_7\times I_8 \times I_9\times I_{10} \times I_{11} \times I_{12}  \times I_{13}$$

with

$$I_1=\left[-\frac{1}{100},\frac{62}{25}\right]\, I_2=\left[-\frac{1}{100},\frac{3}{2}\right] \, I_3=\left[\frac{189}{20},\frac{1001}{100}\right]\, I_4= \left[-\frac{33}{100},\frac{1}{100}\right] \, I_5=\left[-\frac{1}{100},\frac{31}{100}\right]$$

$$I_6=\left[-\frac{1}{100},\frac{12}{25}\right] \, I_7=\left[-\frac{1}{100},\frac{69}{25}\right] \, I_8= \left[-\frac{1}{100},\frac{42}{25}\right] \, I_9=\left[-\frac{1}{100},\frac{101}{50}\right]\, I_{10}=\left[\frac{1}{2},\frac{101}{100}\right] $$

$$ I_{11}=\left[-\frac{1}{100},\frac{81}{100}\right] \, I_{12}=\left[-\frac{1}{100},\frac{89}{100}\right]\, I_{13}=\left[\frac{43170106052787}{10000000000000},\frac{43170844652787}{10000000000000}\right]$$

We would like to point out that $I_{13}=[a_0-3(sa+da),a_0+3(sa+da)]$. 

\begin{rem} In order to obtain each bound $\text{B}(\phi_i)=\left\{\hbox{lb}_i,\hbox{ub}_i\right\}$, we first compute the minimum and maximum of the function, $mi$ and $ma$ respectively. Then we  define $\hbox{lb}_i=\frac{1}{10^k} \floor{10^k mi}$ and $\hbox{ub}_i=\frac{1}{10^k} \ceil{10^k ma}$, where $k$ is a positive integer. Usually $k$ is between 4 and 12 and it is chosen according to the precision that we want for the bound. The reason for using $\hbox{lb}_i,\hbox{ub}_i$ and not the minimum and maximum  is that $ma$ and $mi$ may have complicate expression in term of radicals and roots of polynomials.

\end{rem}

\begin{rem}

The bounds for each of the functions for which we are writing an expression in terms of the variables $x_1$, $\dots$, $x_{12}$ have been obtained by directly using the Lagrange multipliers method. For those functions $\phi_i$ that we are written in term of the previous $\phi_j$ and some $x_k$, we used the method of Lagrange multiplier by changing each $\phi_j$ by a variable $u_j$ and we used the fact that $u_j$ has bound given by $\text{B}(\phi_j)$. 
\end{rem}

%the variable $x_9$ is related with the function $\Theta_a(t,a,b)$; the variable $x_{10}$ is related with the function $\dot{\Theta}_a(t,a,b)$;   
%the variable $x_{15}$ is related with the function $\Theta(t,a,b)$. 

\subsection{$\phi_1=\displaystyle{-\frac{400 x_1}{s^3}}$} Entry $2$ of the vector field $W$. $\text{B(}\phi _1\text{)= }\left\{-\frac{102003}{125000},\frac{237}{50000}\right\}$

\subsection{$\phi_2=\displaystyle{\frac{100 a^2}{x_3^3}-\frac{25}{x_3^2}-\frac{200 x_3}{s^3}}$} Entry $4$ of the vector field $W$. $\text{B(}\phi _2\text{)= }\left\{-\frac{387429}{1000000},\frac{373771}{1000000}\right\}$

\subsection{$\phi_3=\displaystyle{  \frac{10 a}{x_3^2} }$} Entry $5 $ of the vector field $W $. $\text{B(}\phi _3\text{)= }\left\{\frac{215419}{500000},\frac{483423}{1000000}\right\}$

\subsection{$\phi_4=\displaystyle{ -\frac{400 \left(x_3^2-8 x_1^2\right)}{s^5}  }$} Entry $(2,1) $ of the matrix $dW $. $\text{B(}\phi _4\text{)= }\left\{-\frac{94797}{200000},-\frac{115837}{1000000}\right\}$

\subsection{$\phi_5=\displaystyle{ \frac{1200 x_1 x_3}{s^5}   }$} Entry $(2,3) $ of the matrix $dW$. $\text{B(}\phi _5\text{)= }\left\{-\frac{301}{200000},\frac{101747}{500000}\right\}$

\subsection{$\phi_6=\displaystyle{  50 \left(-\frac{6 a^2}{x_3^4}+\frac{12 x_3^2}{s^5}-\frac{4}{s^3}+\frac{1}{x_3^3}\right) }$} Entry $(4,3) $ of the matrix $dW $. $\text{B(}\phi _6\text{)= }\left\{-\frac{83881}{200000},-\frac{108213}{1000000}\right\}$

\subsection{$\phi_7=\displaystyle{ -\frac{20 a}{x_3^3}  }$} Entry $(5,3) $ of the matrix $ dW $. $\text{B(}\phi _7\text{)= }\left\{-\frac{12789}{125000},-\frac{86081}{1000000}\right\}$

\subsection{$\phi_8=\displaystyle{-\frac{1200 \left(16 x_1^2 x_3-x_3^3\right)}{s^7}   }$} Part of the entry $(2,1) $ of the matrix $dW_1 $.
 $\text{B(}\phi _8\text{)= }\left\{-\frac{3273}{500000},\frac{18809}{125000}\right\}$

\subsection{$\phi_9=\displaystyle{  -\frac{4800 \left(8 x_1^3-3 x_1 x_3^2\right)}{s^7} }$} Part of the entry $(2,1) $ of the matrix $dW_1 $. 
$\text{B(}\phi _9\text{)= }\left\{-\frac{1911}{1000000},\frac{193837}{1000000}\right\}$

\subsection{$\phi_{10}=\displaystyle{x_4 \phi_8+x_2\phi_9   }$} Entry $ (2,1)$ of the matrix $dW_1 $.
 $\text{B(}\phi _{10}\text{)= }\left\{-\frac{52523}{1000000},\frac{73229}{250000}\right\}$

\subsection{$\phi_{11}=\displaystyle{ \frac{4800 \left(x_1^3-x_1 x_3^2\right)}{s^7}  }$} Part of the entry $(2,3) $ of the matrix $dW_1 $. 
$\text{B(}\phi _{11}\text{)= }\left\{-\frac{34393}{500000},\frac{637}{1000000}\right\}$

\subsection{$\phi_{12}=\displaystyle{  x_4 \phi_{11}+x_2\phi_8  }$} Entry $(2,3) $ of the matrix $ dW_1 $.
 $\text{B(}\phi _{12}\text{)= }\left\{-\frac{10507}{1000000},\frac{31051}{125000}\right\}$

\subsection{$\phi_{13}=\displaystyle{  \frac{1200 a^2}{x_3^5}-\frac{3000 x_3^3}{s^7}+\frac{1800 x_3}{s^5}-\frac{150}{x_3^4} }$} Part of the Entry $ (4,3)$ of the matrix $ dW_1 $. $\text{B(}\phi _{13}\text{)= }\left\{\frac{44031}{500000},\frac{9611}{40000}\right\}$

\subsection{$\phi_{14}=\displaystyle{  x_4 \phi_{13}+2 x_2\phi_{11}  }$} Entry $(4,3) $ of the matrix $dW_1 $. $\text{B(}\phi _{14}\text{)= }\left\{-\frac{285649}{1000000},\frac{2157}{500000}\right\}$

\subsection{$\phi_{15}=\displaystyle{ \frac{60 a x_4}{x_3^4}  }$} Entry $ (5,3) $ of the matrix $ dW_1$. $\text{B(}\phi _{15}\text{)= }\left\{-\frac{10719}{1000000},\frac{13}{40000}\right\}$

\subsection{$\phi_{16}=\displaystyle{  \frac{400 \left(8 x_5 x_1^2+3 x_3 x_7 x_1-x_3^2 x_5\right)}{s^5} }$} Entry $6 $ of the vector field $G $. $\text{B(}\phi _{16}\text{)= }\left\{-\frac{75543}{500000},\frac{563009}{1000000}\right\}$

\subsection{$\phi_{17}=\displaystyle{  -10 \phi_7+2  x_5 \phi_{5}+ x_7\phi_{6}  }$} Entry $8 $ of the vector field $G $. 
$\text{B(}\phi _{17}\text{)= }\left\{-\frac{150409}{500000},\frac{1153481}{1000000}\right\}$

\subsection{$\phi_{18}=\displaystyle{  \frac{10 \left(x_3-2 a x_7\right)}{x_3^3} }$} Entry $9 $ of the vector field $ G $.
 $\text{B(}\phi _{18}\text{)= }\left\{-\frac{85201}{500000},\frac{113003}{1000000}\right\}$

\subsection{$\phi_{19}=\displaystyle{ x_5 \phi_9+x_7 \phi_8  }$} Entry $(6,1) $ of the matrix $dG $.
 $\text{B(}\phi _{19}\text{)= }\left\{-\frac{10003}{500000},\frac{475393}{1000000}\right\}$

\subsection{$\phi_{20}=\displaystyle{ x_5 \phi_8+x_7 \phi_{11}   }$} Entry $(6,3) $ of the matrix $dG $. 
$\text{B(}\phi _{20}\text{)= }\left\{-\frac{191879}{1000000},\frac{9681}{200000}\right\}$

\subsection{$\phi_{21}=\displaystyle{  -\frac{600 a}{x_3^4} }$} Part of the entry $(8,3) $ of the matrix $dG $. 
$\text{B(}\phi _{21}\text{)= }\left\{-\frac{324799}{1000000},-\frac{257987}{1000000}\right\}$

\subsection{$\phi_{22}=\displaystyle{  \phi_{21}+2 x_5 \phi_{1}+x_7 \phi_{13}    }$} Entry $ (8,3) $ of the matrix $dG $.
 $\text{B(}\phi _{22}\text{)= }\left\{-\frac{147363}{1000000},\frac{289227}{500000}\right\}$

\subsection{$\phi_{23}=\displaystyle{  \frac{60 a x_7}{x_3^4}-\frac{20}{x_3^3}    }$} Entry $ (9,3)$ of the matrix $dG $.
 $\text{B(}\phi _{23}\text{)= }\left\{-\frac{961}{40000},\frac{32973}{500000}\right\}$

\subsection{$\phi_{24}=\displaystyle{ x_8 \phi_{8}+x_6 \phi_{9}  }$} Part of the entry $(6,1) $ of the matrix $ dG_1 $. $\text{B(}\phi _{24}\text{)= }\left\{-\frac{1617}{125000},\frac{69167}{200000}\right\}$

%Entry (dG1[6,1])

\subsection{$\phi_{25}=\displaystyle{  -\frac{4800 \left(16 x_1^4-27 x_3^2 x_1^2+x_3^4\right)}{s^9}   }$} Part of the entry $(6,1) $ of the matrix $ dG_1 $. 
$\text{B(}\phi _{25}\text{)= }\left\{-\frac{15923}{250000},\frac{8349}{500000}\right\}$

\subsection{$\phi_{26}=\displaystyle{   \frac{24000 \left(16 x_1^3 x_3-3 x_1 x_3^3\right)}{s^9}   }$} Part of the entry $(6,1) $ of the matrix $ dG_1 $. $\text{B(}\phi _{26}\text{)= }\left\{-\frac{85577}{1000000},\frac{1011}{1000000}\right\}$

\subsection{$\phi_{27}=\displaystyle{   x_4 \phi_{25}+x_2 \phi_{26}  }$} Part of the entry $(6,1) $ of the matrix $ dG_1 $.
 $\text{B(}\phi _{27}\text{)= }\left\{-\frac{33469}{250000},\frac{4507}{200000}\right\}$

\subsection{$\phi_{28}=\displaystyle{  \frac{4800 \left(128 x_1^4-96 x_3^2 x_1^2+3 x_3^4\right)}{s^9}   }$} Part of the entry $(6,1) $ of the matrix $ dG_1 $.
 $\text{B(}\phi _{28}\text{)= }\left\{-\frac{64021}{1000000},\frac{7643}{40000}\right\}$

\subsection{$\phi_{29}=\displaystyle{  x_2 \phi_{28}+x_4 \phi_{26}    }$} Part of the entry $(6,1) $ of the matrix $ dG_1 $.
 $\text{B(}\phi _{29}\text{)= }\left\{-\frac{12111}{125000},\frac{314853}{1000000}\right\}$

\subsection{$\phi_{30}=\displaystyle{   \phi_{24} +  x_7 \phi_{27}+x_5 \phi_{29}   }$} Entry $(6,1) $ of the matrix $ dG_1 $. 
$\text{B(}\phi _{30}\text{)= }\left\{-\frac{41247}{100000},\frac{505637}{1000000}\right\}$

%Entry (dG1[6,1])

\subsection{$\phi_{31}=\displaystyle{ x_6 \phi_8+x_8 \phi_{11} }$} Part of the entry $(6,3) $ of the matrix $ dG_1 $. 
$\text{B(}\phi _{31}\text{)= }\left\{-\frac{118703}{1000000},\frac{73297}{1000000}\right\}$

\subsection{$\phi_{32}=\displaystyle{  -\frac{24000 \left(3 x_1^3 x_3-x_1 x_3^3\right)}{s^9}  }$} Part of the entry $(6,3) $ of the matrix $ dG_1 $.
 $\text{B(}\phi _{32}\text{)= }\left\{-\frac{337}{1000000},\frac{6067}{200000}\right\}$.

\subsection{$\phi_{33}=\displaystyle{ x_2 \phi_{25}+x_4 \phi_{32}    }$} Part of the entry $(6,3) $ of the matrix $ dG_1 $.
 $\text{B(}\phi _{33}\text{)= }\left\{-\frac{105549}{1000000},\frac{25351}{1000000}\right\}$

\subsection{$\phi_{34}=\displaystyle{  \phi_{31} +  x_7 \phi_{33}+x_5 \phi_{27}    }$} Entry $(6,3) $ of the matrix $ dG_1 $. 
$\text{B(}\phi _{34}\text{)= }\left\{-\frac{1411}{3125},\frac{37563}{250000}\right\}$

\subsection{$\phi_{35}=\displaystyle{  \frac{2400 a x_4}{x_3^5}  }$} Part of the entry $(8,3) $ of the matrix $ dG_1 $. 
$\text{B(}\phi _{35}\text{)= }\left\{-\frac{45369}{1000000},\frac{11}{8000}\right\}$

\subsection{$\phi_{36}=\displaystyle{   \phi_{35} +  x_8 \phi_{13}+ 2 x_6 \phi_{11}    }$} Part of the entry $(8,3) $ of  $ dG_1 $. 
$\text{B(}\phi _{36}\text{)= }\left\{-\frac{113807}{1000000},\frac{406413}{1000000}\right\}$

\subsection{$\phi_{37}=\displaystyle{  600 \left(-\frac{10 a^2}{x_3^6}+\frac{8 x_3^4}{s^9}-\frac{108 x_1^2 x_3^2}{s^9}+\frac{12 x_1^2}{s^7}+\frac{1}{x_3^5}\right)  }$} Part of the entry $(8,3) $ of the matrix $ dG_1 $. 
$\text{B(}\phi _{37}\text{)= }\left\{-\frac{579}{4000},-\frac{2871}{50000}\right\} $

\subsection{$\phi_{38}=\displaystyle{     x_4 \phi_{37}+ 2 x_2 \phi_{32}    }$} Part of the entry $(8,3) $ of the matrix $ dG_1 $. 
$\text{B(}\phi _{38}\text{)= }\left\{-\frac{2459}{1000000},\frac{138773}{1000000}\right\}$

\subsection{$\phi_{39}=\displaystyle{    \phi_{36} + 2 x_5 \phi_{33}+ x_7 \phi_{38}   }$} Entry $(8,3) $ of the matrix $ dG_1 $. 
$\text{B(}\phi _{39}\text{)= }\left\{-\frac{117597}{1000000},\frac{400107}{1000000}\right\}$

\subsection{$\phi_{40}=\displaystyle{   \frac{60 \left(x_3 \left(a x_8+x_4\right)-4 a x_4 x_7\right)}{x_3^5}  }$} Entry $(9,3) $ of the matrix $ dG_1 $.
$\text{B(}\phi _{40}\text{)= }\left\{-\frac{2853}{1000000},\frac{32303}{500000}\right\}$

\subsection{$\phi_{41}=\displaystyle{ \frac{400 \left(8 x_9 x_1^2+3 x_3 x_{11} x_1-x_3^2 x_9\right)}{s^5}  }$} Entry $6 $ of the vector field $ U $. 
$\text{B(}\phi _{41}\text{)= }\left\{-\frac{19173}{20000},\frac{83109}{500000}\right\}$

\subsection{$\phi_{42}=\displaystyle{ 2 x_9 \phi_5+x_{11} \phi_6  }$} Entry $8 $ of the vector field $ U $. 
$ \text{B(}\phi _{42}\text{)= }\left\{-\frac{19549}{500000},\frac{205701}{250000}\right\} $

\subsection{$\phi_{43}=\displaystyle{ x_{11} \phi_8+x_9 \phi_9  }$} Entry $(6,1) $ of the matrix $ dU $. 
$ \text{B(}\phi _{43}\text{)= }\left\{-\frac{9163}{1000000},\frac{256717}{500000}\right\} $

\subsection{$\phi_{44}=\displaystyle{ x_{11} \phi_{11}+x_9\phi_8  }$} Entry $(6,3) $ of the matrix $ dU $. 
$ \text{B(}\phi _{44}\text{)= }\left\{-\frac{3447}{50000},\frac{152321}{500000}\right\}  $

\subsection{$\phi_{45}=\displaystyle{ 2 x_9 \phi_{11}+x_{11} \phi_{13}  }$} Entry $(8,3) $ of the matrix  $ dU $. 
$ \text{B(}\phi _{45}\text{)= }\left\{-\frac{280299}{1000000},\frac{197197}{1000000}\right\} $

\subsection{$\phi_{46}=\displaystyle{ x_{12} \phi_8 + x_{10} \phi_9  }$} Part of the entry $(6,1) $ of the vector field $ dU_1 $. 
$ \text{B(}\phi _{46}\text{)= }\left\{-\frac{7757}{1000000},\frac{10303}{31250}\right\} $

\subsection{$\phi_{47}=\displaystyle{  x_4 \phi_{26} + x_2 \phi_{28}  }$} Part of the entry $(6,1) $ of the matrix  $ dU_1 $. 
$\text{B(}\phi _{47}\text{)= }\left\{-\frac{12111}{125000},\frac{314853}{1000000}\right\}  $

\subsection{$\phi_{48}=\displaystyle{  x_{11} \phi_{27}+x_9 \phi_{47}+ \phi_{46}  }$} Entry $(6,1) $ of the matrix  $ dU_1 $. 
$ \text{B(}\phi _{48}\text{)= }\left\{-\frac{203233}{500000},\frac{35511}{62500}\right\} $

\subsection{$\phi_{49}=\displaystyle{  x_{12} \phi_{11}+x_{10} \phi_8  }$} Part of the entry $(6,3) $ of the matrix  $ dU_1 $. 
$ \text{B(}\phi _{49}\text{)= }\left\{-\frac{67831}{1000000},\frac{30533}{200000}\right\} $

\subsection{$\phi_{50}=\displaystyle{  x_9 \phi_{27}+x_{11} \phi_{33} + \phi_{49}  }$} Entry $(6,3) $ of the matrix  $ dU_1 $. 
$ \text{B(}\phi _{50}\text{)= }\left\{-\frac{105939}{250000},\frac{218721}{1000000}\right\}  $

\subsection{$\phi_{51}=\displaystyle{  \frac{1200 a^2}{x_3^5}-\frac{1200 x_3^5}{s^9}+\frac{2400 x_1^2 x_3^3}{s^9}+\frac{28800 x_1^4 x_3}{s^9}-\frac{150}{x_3^4}  }$} Part of the  entry $(8,3) $ of the matrix  $ dU_1 $. 
$  \text{B(}\phi _{51}\text{)= }\left\{\frac{44031}{500000},\frac{9611}{40000}\right\} $

\subsection{$\phi_{52}=\displaystyle{  2 x_{10} \phi_{11} + x_{12} \phi_{51}  }$} Part of the entry $(8,3) $ of the matrix  $ dU_1 $. 
$ \text{B(}\phi _{52}\text{)= }\left\{-\frac{141351}{1000000},\frac{53783}{250000}\right\} $

\subsection{$\phi_{53}=\displaystyle{  2 x_9 \phi_{33} + x_{11} \phi_{38}+ \phi_{52}  }$} Entry $(8,3) $ of the matrix  $ dU_1 $. 
$ \text{B(}\phi _{53}\text{)= }\left\{-\frac{569761}{1000000},\frac{429957}{1000000}\right\} $

\subsection{$\phi_{54}=\displaystyle{ \frac{100 (a-a_0) (a+a_0)}{x_3^3} }$} Entry 4  of $W-W_0$. 
$\text{B(}\phi _{54}\text{)= }\left\{-\frac{19}{500000},\frac{19}{500000}\right\}$

\subsection{$\phi_{55}=\displaystyle{\frac{10 (a-a_0)}{x_3^2} }$} Entry 5  of $W-W_0$. 
$\text{B(}\phi _{55}\text{)= }\left\{-\frac{1}{200000},\frac{1}{200000}\right\}$

\subsection{$\phi_{56}=\displaystyle { -\frac{100 (a-a_0) \left(3 a x_7+3 a_0 x_7-2 x_3\right)}{x_3^4} }$} Entry 8 of $G-G_0$. 
$ \text{B(}\phi _{56}\text{)= }\left\{-\frac{1}{40000},\frac{1}{40000}\right\} $

\subsection{$\phi_{57}=\displaystyle{ -\frac{20 x_7 (a-a_0)}{x_3^3}  }$} Entry 9 of $G-G_0$. 
$\text{B(}\phi _{57}\text{)= }\left\{-\frac{3}{1000000},\frac{3}{1000000}\right\}$

\subsection{$\phi_{58}=\displaystyle{ -\frac{300 x_{11} (a-a_0) (a+a_0)}{x_3^4} }$} Entry 8 of $U-U_0$. 
$\text{B(}\phi _{57}\text{)= }\left\{-\frac{1}{1000000},\frac{1}{1000000}\right\}$

%%%%%%%%%%%%%%%%%%%%%%%%%%%%%%%%%%%%%%%%%%%%
%%%%%%%%%%%%%%%%%%%%%%%%%%%%%%%%%%%%%%%%%%%%

\end{document}